\author[1]{Willem Adriaan Salm}
\affil[1]{Universit\'e Libre de Bruxelles}
\newcommand{\hk}{hyperk\"ahler\xspace}
\title{Elliptic analysis on collapsing gravitational instantons modelled using the Gibbons-Hawking ansatz}
\numberwithin{table}{section}
\declaretheorem[name=Theorem,numberwithin=section]{theorem}
\declaretheorem[name=Lemma,sibling=theorem]{lemma}
\declaretheorem[name=Definition,sibling=theorem]{definition}
\declaretheorem[name=Corollary,sibling=theorem]{corollary}
\declaretheorem[name=Proposition,sibling=theorem]{proposition}
\theoremstyle{remark}
\declaretheorem[name=Remark,sibling=theorem]{remark}
\renewcommand{\d}{\operatorname{d}}
\newcommand{\Z}{\mathbb{Z}}
\newcommand{\del}{\partial}
\newcommand{\N}{\mathbb{N}}
\newcommand{\R}{\mathbb{R}}
\newcommand{\Vol}{\operatorname{Vol}}
\renewcommand{\O}{\mathcal{O}}
\begin{document}

\maketitle
\begin{abstract}
In this paper, we will establish elliptic regularity estimates on families of gravitational instantons whose model metric near infinity collapse to a flat 3 dimensional space. We show that the constants in these estimates can be chosen uniformly for the whole family of metrics. We apply these results to the Laplacian and show it is Fredholm. We also determine between which weighted spaces it is an isomorphism.
\end{abstract}

\tableofcontents

\section{Introduction}
Gravitational instantons are complete, non-compact, \hk manifolds of dimension four with $L^2$-bounded curvature. The first construction of gravitational instantons were found in the late 70's, and in 1989 all asymptotically locally euclidean (ALE) gravitational instantons are classified by \cite{Kronheimer1989}. Over the years other, non-ALE, gravitational instantons were found. \cite{Sun2021} showed that, depending on their asymptotic metric, all gravitational instantons can be separated into 6 classes called, ALE, ALF, ALG, ALG*, ALH and ALH*.
\\

\noindent
\cite{Kronheimer1989} classified all ALE gravitational instantons in terms of the model at infinity and the cohomology classes of the triple of K\"ahler forms. 
These "Torelli theorems" for the other gravitational instantons took time to establish. (See \cite{Minerbe2011} and \cite{Chen2019} for ALF,
\cite{Chen2021} for ALG and ALH, \cite{Chen2021ALG} for ALG*, \cite{Collins2022Torello} for ALH*) 
\\

\noindent
In any case, the moduli spaces for these gravitational instantons are not closed. Therefore, families of gravitational instantons can degenerate. In this paper we focus on one type of degeneration. We will set up the elliptic regularity theory and study the Fredholm properties of the Laplacian in a framework optimized for this degeneration. 
\\

\noindent
To explain this degeneration, recall\footnote{An exposition of all possible asymptotic metrics can be found in \cite{Sun2021}, Chapter 6} that up to some quotient by some finite group action, the metric at infinity of any gravitational instanton can be approximated by the metric found by \cite{Gibbons1978}. This model metric is an explicit \hk metric on a circle bundle over a punctured flat 3-manifold. Even more, the circle radius is bounded for any non-ALE type gravitational instantons. In this paper we focus on families of Riemannian manifolds whose metric approximate an non-ALE type gravitational instantons near infinity whose circle bundle collapses to the flat base space.
\\

\noindent
This analysis is interesting for three reasons. 
First, it will be an essential tool for gluing constructions for these gravitational instantons. Namely, for this method one picks a family of degenerating Riemannian manifolds and a function that measures the failure of the metric to be \hk. To perturb these spaces into a gravitational instantons, one wishes to apply the inverse function theorem on this error map. In order for this to work, one needs that the initial error is sufficiently small and the linearization of this error map is invertible. The first condition can be taken care in the construction of the family of Riemannian manifolds. However, if one does not use the correct Banach spaces, the bound on the inverse of the linearized error map may grow so fast that the inverse function theorem cannot be applied. In this paper we will explain how to set up the analysis such that all our estimates are uniform in our scaling parameter. This can be used to show that the linearized error map has a uniform bounded inverse.
In an upcoming paper the author will use this method in constructing gravitational instantons of type ALG, ALG*, ALH and ALH* by gluing multiple Atiyah-Hitchin and Taub-NUT manifolds to a bulk space that is constructed by the Gibbons-Hawking ansatz.
\\

\noindent
Secondly, it is interesting because it is general enough that it can be applied to other elliptic problems, than just the above gluing construction. Namely, instead of proving each elliptic regularity estimate separately, we will give in Section 		\ref{subsec:analysis-asymptotic:local-estimates} a general recipe where one can convert any standard local estimate on $\R^n$ into an uniform local estimate on these families of approximate gravitational instantons. For example, it can be used in the deformation theory of anti-self-dual instantons over the space of gravitational instantons.
\\

\noindent
Finally, it is interesting because it uses a systematic and uniform approach for all the different types of gravitational instantons, while these different types have totally different behavior. For example, ALE and ALF gravitational instantons are hyperbolic manifolds while the other types are parabolic. Also the model metrics near infinity vary wildly: from a simple quotient of $\R^4$ by a finite subgroup of $SU(2)$ for ALE spaces to ALH* type gravitational instantons whose volume growth is of order $r^{4/3}$. The analysis of each type of gravitational instanton is studied before separately (ALF by \cite{Schroers2020}, ALG by \cite{Chen2020}, ALG* by \cite{Chen2021nxt} and ALH* by \cite{Hein2018}), but in order to show their differences and their similarities, we will do the analysis for all these types simultaneously and only distinguish them when the geometry forces us to.

\subsection{Setup}
In this paper we make the following assumptions and use the following notation.
We assume that we have a family of complete, non-compact, 4-dimensional Riemannian manifolds $(M_\epsilon, g_\epsilon)$, labeled by the parameter $\epsilon \in (0,1)$. 
\\

\noindent
Next, we assume that the model metric near infinity approximates the following version of the Gibbons-Hawking ansatz: Consider, 
\begin{itemize}
	\setlength\itemsep{0em}
	\item $B$, the quotient of $\R^3$ by a non-maximal lattice, where depending on the type of gravitational instanton the topology of $B$ is given in Table \ref{table:conditions-gibbons-Hawking}.
	\item $B'$ as the complement of a large compact set in $B$.
	\item $P_\infty$, a circle bundle over $B'$
	\item $h$, a positive harmonic function on $B'$ that is given in Table \ref{table:conditions-gibbons-Hawking}. We assume $c$ > 0 and $k \in \Z$ is chosen such that $[*\d h] = c_1(P_\infty) \in H^2_{dR}(B')$.
	\item a connection $\eta$ on $P_\infty$ that satisfies the Bogomolny equation $* \d h = \d \eta$.
\end{itemize}
\begin{table}[h!]
	\centering
	\label{table:conditions-gibbons-Hawking}
	\begin{tabular}{c|c|c|c|c|c}
		& ALF & ALG & ALG* & ALH & ALH* \\
		\hline
		$B$ & $\R^3$ & $\R^2 \times S^1$ & $\R^2 \times S^1$ & $\R^+\times T^2$ & $\R^+ \times T^2$ \\
		$h$ & $c + \frac{k}{2 |x|}$ & $c$ & $c + k \: \log |x|$ 
		& $c$ & $c + k \: |x|$ 
	\end{tabular}
	\caption{\textit{Conditions for the Gibbons-Hawking model metric depending on the type of gravitational instanton. For function $h$ we assume that $c > 0$ and $k \in \Z$.}}
\end{table}
\noindent
Given this data, we can equip $P_\infty$ with the following Riemannian metric
\begin{equation}
	\label{eq:gibbons-hawking-metric}
	g^{GH}_\epsilon = (1 + \epsilon h) g_B + \frac{\epsilon^2}{1 + \epsilon h} \eta^2
\end{equation}
We will often use the shorthand $h_\epsilon := 1 + \epsilon h$. Due to the Bogomolny equation, the 
K\"ahler forms $\omega_i^{GH} = \epsilon \d x_i \wedge \eta + h_\epsilon \d x_j \wedge \d x_k $ are an orthonormal closed basis of the self-dual 2-forms and make $g^{GH}_\epsilon$ \hk.
\\

\noindent
Now, let $\Gamma$ be a finite\footnote{When $B = \R^+ \times T^2$, this requires $\Gamma$ to be the trivial group.} group of rotations in $\R^3$, $\R^2$ or $\R^+$ respectively.  Lift $\Gamma$ to a free action on $P_\infty$ and assume that $g^{GH}_\epsilon$ is invariant under this action. Then $P_\infty/\Gamma$ is a \hk manifold and we consider this as our model space at infinity.\\

\noindent
Finally, we wish to compare our model metric $(P_\infty/\Gamma, g_\epsilon)$ to $(M_\epsilon, g_\epsilon)$. So assume we can identify the complement of a large compact set of $M_\epsilon$ with $P_\infty/\Gamma$. To compare the metrics, assume that there is some $\nu > 0$ such that for all $k \in \N$, $|\nabla^k (g_\epsilon-g_\epsilon^{GH})| = \O(r^{-k-\nu})$. We also require that this error can be estimated uniformly in the  parameter $\epsilon$.

\begin{remark}
	For any $x \in B'$, the circle radius of its fiber is of order $\O(\epsilon)$.  Therefore, we can see $\epsilon$ as a scaling parameter and in the limit $\epsilon \to 0$, the space $P_\infty$ collapses to the base space $B'$.
\end{remark}

\begin{remark}
Although the choices for $B$ and $h$ look very restrictive, according to \cite{Sun2021} the metric $g^{GH}_{\epsilon}$ describes a generic model metric for a gravitational instanton. We only restricted our scope in our choice of degeneration.
\end{remark}

\begin{remark}
	\label{remark:radially-invariant-connection}
	Let $r$ be the radial coordinate on $\R^3$, $\R^2$ or $\R^+$ when $B = \R^3$, $\R^2\times S^1$ or $\R^+ \times T^2$ respectively. Considering the definition of $h$ in Table \ref{table:conditions-gibbons-Hawking}, one can show $\d h$ is a multiple of $\Vol_{S^2}$ or $\Vol_{T^2}$ when $B = \R^3$ or $B \not = \R^3$ respectively. Hence we can always find a connection on $P_\infty$ that is $r$-invariant. Even more, the space of connections modulo gauge, $H^1(B,\R)/H^1(B, \Z)$, coincides with the space of $r$-invariant connections and so up to gauge transform we assume that $\eta$ is $r$-invariant.
\end{remark}

\subsection{Results}
\noindent
Our goal in this paper is twofold. First, we want to find an elliptic regularity theory on $(M_\epsilon, g_\epsilon)$ such that all elliptic estimates are uniform in the scaling parameter $\epsilon$. Because these manifolds are non-compact, we have to define suitable weighted norms. For the cases ALG* and ALH* this is non-trivial, as their metric at infinity is neither conical or cylindrical.
We explain in Section \ref{sec:weighted-norms} that after some conformal rescaling and certain universal covers, the manifold has bounded geometry, uniform in the scaling parameter $\epsilon$. This will enable us to define the weighted norms needed for this study.
In Section \ref{subsec:analysis-asymptotic:local-estimates}, we will give a general recipe how to get regularity results for any elliptic operator on these manifolds.
We apply it explicitly for the Laplacian, for which we can conclude
\begin{theorem}
	\label{thm:analysis-asymptotic:schauder-global-holder}
	Let $k \in \N_{\ge 2}$, $\alpha \in (0,1)$ and $\delta \in \R$ and the scaling parameter $\epsilon \in (0,1)$. Consider the weighted H\"older norm $C^{k, \alpha}_\delta$ from Definition \ref{def:asymptotic-geometry:weighted-norm}, the asymptotic regions $P_\infty$ with a  neighborhood $P'_\infty$ and the conformal rescaling $\Omega$ from definition \ref{def:asymptotic-geometry:metric-cf}. There exists a constant $C > 0$, uniformly in $\epsilon$, such that for any $u \in C^{k, \alpha}_{loc}(P'_\infty)$, $\Omega^{-2} \Delta u \in C^{k-2, \alpha}_{\delta}(P'_\infty)$ implies $u \in C^{k,\alpha}(P_\infty)$ and
	$$
	\|u\|_{C^{k, \alpha}_\delta(P_\infty)} \le C \left[
	\| \Omega^{-2} \Delta u \|_{C^{k-2}_\delta (P'_\infty)} + \|u\|_{C^0_\delta(P'_\infty)}
	\right].
	$$
	Furthermore, if $u$ vanishes on $\del P'_\infty$, then
	$$
	\| u\|_{C^{k,\alpha}_{\delta}(P'_\infty)} \le C \left[
	\| \Omega^{-2} \Delta u\|_{C^{k-2,\alpha}_{\delta}(P'_\infty)}
	+ \| u\|_{C^{0}_{\delta}(P'_\infty)}
	\right].
	$$
\end{theorem}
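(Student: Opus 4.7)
The plan is to use the conformal rescaling $\tilde g_\epsilon := \Omega^2 g_\epsilon$ from Definition \ref{def:asymptotic-geometry:metric-cf} to reduce the estimate to a classical local Schauder estimate on a manifold of bounded geometry, and then to reassemble the local bounds through the weight using the general recipe of Section \ref{subsec:analysis-asymptotic:local-estimates}. By the construction of Section \ref{sec:weighted-norms}, $(P'_\infty, \tilde g_\epsilon)$, possibly after passing to the universal cover in the collapsing circle direction, has bounded geometry with constants independent of $\epsilon$, so standard tools become available.

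The first step is to rewrite $\Omega^{-2}\Delta_{g_\epsilon}$ in terms of $\Delta_{\tilde g_\epsilon}$. The usual formula for a conformal change in dimension four gives
$$\Omega^{-2}\Delta u \;=\; \Delta_{\tilde g_\epsilon} u \;-\; 2\,\tilde g_\epsilon\bigl(\nabla_{\tilde g_\epsilon}\log\Omega,\, \nabla_{\tilde g_\epsilon} u\bigr),$$
so $\Omega^{-2}\Delta_{g_\epsilon}$ is a uniformly elliptic second-order operator on $(P'_\infty, \tilde g_\epsilon)$ whose coefficients are bounded in every $C^k$ with respect to $\tilde g_\epsilon$ uniformly in $\epsilon$. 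The second step is to cover $P'_\infty$ by $\tilde g_\epsilon$-geodesic balls of uniform size and bounded multiplicity and apply the classical interior Schauder estimate to each concentric pair $B \subset B'$,
$$\|u\|_{C^{k,\alpha}(B)} \;\le\; C\bigl(\|\Omega^{-2}\Delta u\|_{C^{k-2,\alpha}(B')} + \|u\|_{C^0(B')}\bigr),$$
where $C$ depends only on the bounded-geometry constants and on the $C^{k-2,\alpha}$ bounds on the coefficients, and therefore not on $\epsilon$. The third step upgrades these local bounds to the weighted global statement: since the weight appearing in $C^{k,\alpha}_\delta$ of Definition \ref{def:asymptotic-geometry:weighted-norm} has bounded ratio on any $\tilde g_\epsilon$-unit ball, one multiplies each local estimate by the value of the weight at the ball centre and takes a supremum over the cover; the $C^0_\delta$ term on the right then arises as the weighted supremum of the local $C^0$ contributions.

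For the boundary version the argument is identical, except that for balls meeting $\partial P'_\infty$ one invokes the boundary Schauder estimate using $u|_{\partial P'_\infty}=0$; the same weighted patching then produces the second bound. The main obstacle, and the reason the general recipe of Section \ref{subsec:analysis-asymptotic:local-estimates} is set up in the first place, is to verify $\epsilon$-uniformity throughout: one must check that the coefficients produced by the conformal change, together with the error term $g_\epsilon - g_\epsilon^{GH}$ coming from the setup, are controlled in $C^{k-2,\alpha}$ with respect to $\tilde g_\epsilon$ with constants independent of $\epsilon$, and that the weight function is compatible with the bounded-geometry cover so that the patching introduces no hidden $\epsilon$-dependent factors.
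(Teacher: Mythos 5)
Your proposal is correct and follows essentially the same route as the paper: conformally rescale to the bounded-geometry metric $g_{cf}=\Omega^2 g_\epsilon$ (passing to local universal covers in the collapsing directions), apply uniform local Schauder estimates on balls of fixed radius, and patch by taking a supremum over the cover, with the boundary Schauder estimate handling the Dirichlet case. The only cosmetic difference is that the paper absorbs the weight into the conjugated operator $L_\delta=e^{-\delta\rho}\Omega^{-2}\Delta(e^{\delta\rho}\,\cdot\,)$ and works with $e^{-\delta\rho}u$ in harmonic coordinates, whereas you keep the operator fixed and multiply each local estimate by the weight at the ball centre; both rest on the same facts (uniform ellipticity and coefficient bounds, and bounded oscillation of the weight on unit balls) established in Lemma \ref{lem:analysis-asymptotic:christoffel-gtilde} and Proposition \ref{prop:analysis-asymptotic:laplace-equation-expansion}.
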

\noindent
Similar regularity results for the Laplacian in terms of Sobolev norms are given in Theorem \ref{thm:analysis-asymptotic:schauder-global-Sobolev} and \ref{thm:analysis-asymptotic:schauder-global-nash-moser}.
\\

\noindent
In the second part of this paper we restrict our attention solely to the Laplacian. In Section \ref{sec:Fredholm-asymptotic} we will study its Fredholm properties. Using the fibration in the Gibbons-Hawking ansatz, we can decompose the relevant function spaces into an $S^1$ invariant and an $S^1$ non-invariant part and we can study the Laplacian on these spaces separately. For the $S^1$-invariant part, the Fredholm properties are already well known, because $h_\epsilon\Delta_{g^{GH}_\epsilon}$ equals $\Delta_{B}$. We will show that the $S^1$ non-invariant part does not introduce new indicial roots and does not change the index of the Laplacian. The key estimate in this section will be

\begin{theorem}
	\label{thm:analysis-asymptotic:special-perp-schauder-polynomial-case}
	On top of the conditions of Theorem \ref{thm:analysis-asymptotic:schauder-global-holder}, assume that $\delta \not \in \Z$.
	When the end is modelled on the ALH gravitational instanton, we also let $\epsilon < \frac{1}{2C} \min_{P_\infty} h_\epsilon$. 
	Then there exists $P_\infty \subseteq P'_\infty$ and a uniform constant $C' > 0$ such that for any $u \in W^{2, 2}_{\delta}(P'_\infty)$ or $u \in C^{2, \alpha}_{\delta}(P'_\infty)$,
	\begin{align*}
		\| u\|_{W^{2,2}_{\delta}(P_\infty)} \le& C' \left[
		\| \Omega^{-2}\Delta u\|_{L^{2}_{\delta}(P'_\infty)}
		+ \| u\|_{L^2_{\delta}(P'_\infty\setminus P_\infty)} 
		\right] \text{ or} \\
		\| u\|_{C^{2,\alpha}_{\delta}(P_\infty)} \le& C' \left[
		\| \Omega^{-2}\Delta u\|_{C^{0, \alpha}_{\delta}(P'_\infty)}
		+ \| u\|_{C^0_{\delta}(P'_\infty\setminus P_\infty)} 
		\right] \text{ respectively.} 
	\end{align*}
	When $u$ vanishes on $\del P'_\infty$,
	\begin{align*}
		\| u\|_{W^{2,2}_{\delta}(P'_\infty)} \le& C' \left[
		\| \Omega^{-2}\Delta u\|_{L^{2}_{\delta}(P'_\infty)}
		+ \| u\|_{L^2_{\delta}(P'_\infty\setminus P_\infty)} 
		\right] \text{ or} \\
		\| u\|_{C^{2,\alpha}_{\delta}(P'_\infty)} \le& C' \left[
		\| \Omega^{-2}\Delta u\|_{C^{0, \alpha}_{\delta}(P'_\infty)}
		+ \| u\|_{C^0_{\delta}(P'_\infty\setminus P_\infty)} 
		\right] \text{ respectively.} 
	\end{align*}
\end{theorem}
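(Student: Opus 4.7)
The plan is to Fourier-decompose $u$ along the circle fibers of $P_\infty$ into $u = u_0 + u_\perp$ (the fiberwise average and its $L^2$-complement) and to establish the annulus-error estimate separately for each piece. Since $\eta$ is $r$-invariant (Remark \ref{remark:radially-invariant-connection}), the $S^1$-action commutes with $\Delta_{g^{GH}_\epsilon}$ and with the conformal rescaling $\Omega$, so the splitting is preserved by the operator and one can treat each component independently. For $u_\perp$ the key analytic input is the large zeroth-order term that the Laplacian picks up on non-trivial Fourier modes, and for $u_0$ the problem reduces to the standard weighted elliptic theory on the flat base $B$.

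For the $u_\perp$ estimate, I would expand $u_\perp = \sum_{k \neq 0} u_k$ and exploit that, because the vertical vector field $\xi$ dual to $\eta$ satisfies $|\xi|_{g^{GH}_\epsilon}^{2} = \epsilon^{2}/h_\epsilon$, on each Fourier mode
$$\Delta_{g^{GH}_\epsilon} u_k \;=\; L^H u_k \;+\; \frac{h_\epsilon\, k^{2}}{\epsilon^{2}}\, u_k,$$
where $L^H$ is a second-order horizontal operator whose coefficients are controlled on the bounded-geometry metric $\Omega^{-2} g^{GH}_\epsilon$. Multiplying by $\Omega^{-2}$ preserves the largeness of the zeroth order piece. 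A weighted energy pairing of $\Omega^{-2}\Delta u_\perp$ against $u_\perp$ with a cutoff supported in $P'_\infty$ and equal to one on $P_\infty$ then yields
$$\|u_\perp\|_{L^{2}_\delta(P_\infty)}^{2} \;\lesssim\; \epsilon^{2}\, \|\Omega^{-2}\Delta u_\perp\|_{L^{2}_\delta(P'_\infty)}^{2} \;+\; \|u_\perp\|_{L^{2}_\delta(P'_\infty \setminus P_\infty)}^{2},$$
which, combined with Theorem \ref{thm:analysis-asymptotic:schauder-global-holder} and the local regularity recipe of Section \ref{subsec:analysis-asymptotic:local-estimates}, upgrades to the full $W^{2,2}_\delta$ and $C^{2,\alpha}_\delta$ bounds with constants uniform in $\epsilon$.

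For the $u_0$ estimate, the fiberwise average descends to a function $\bar u_{0}$ on the base $B$, and up to a factor of $h_\epsilon$ the Laplacian on $u_0$ coincides with $\Delta_B \bar u_{0}$. For each type of end in Table \ref{table:conditions-gibbons-Hawking} the indicial roots of $\Delta_B$ lie in $\Z$, so the hypothesis $\delta \notin \Z$ lets one invoke the classical Lockhart-McOwen theory on $B$ to obtain precisely an estimate whose error sees only a compact annulus in $B$; pulled back to $P_\infty$ this gives the stated $\|u\|_{L^{2}_\delta(P'_\infty \setminus P_\infty)}$ contribution. Summing the $u_0$ and $u_\perp$ bounds and absorbing the lower-order errors produced by $g_\epsilon - g^{GH}_\epsilon = \O(r^{-\nu})$ completes the proof. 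The boundary-vanishing version is obtained by the same argument with the boundary contributions in the energy pairing dropping out, so that the estimate extends to all of $P'_\infty$; the Schauder version follows by replacing the $L^{2}_\delta$ local step by its Schauder analog.

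The main obstacle is keeping $C'$ uniform as $\epsilon \to 0$. On $u_\perp$ this is almost automatic because $k^{2}/\epsilon^{2}$ dominates every bounded coefficient, but in the ALH case, where $h$ is merely a positive constant and the circle bundle is untwisted, the coercive term $h_\epsilon k^{2}/\epsilon^{2}$ and the error terms coming from the conformal rescaling and from the connection $\eta$ become of comparable size; the hypothesis $\epsilon < \frac{1}{2C}\min_{P_\infty} h_\epsilon$ is exactly the smallness that lets those errors be absorbed back into the coercive piece. On the $u_0$ side uniformity is free because $\Delta_B$ is independent of $\epsilon$; the only $\epsilon$-dependence then enters through the comparison $g_\epsilon \approx g^{GH}_\epsilon$, which is handled by the uniform bounded-geometry estimates of Section \ref{sec:weighted-norms}.
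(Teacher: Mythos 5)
Your proposal follows essentially the same route as the paper's proof: split $u$ into its $S^1$-invariant and non-invariant parts, exploit the spectral gap of order $h_\epsilon/\epsilon^2$ of the fiber Laplacian to control the non-invariant part, reduce the invariant part to the weighted Fredholm theory of $\Delta^B$ on the base for $\delta \notin \Z$, and remove the assumption $g_\epsilon = g^{GH}_\epsilon$ by a perturbation of the operator norm of $\Omega^{-2}(\Delta_{g_\epsilon} - \Delta_{g^{GH}_\epsilon})$. The only difference is technical rather than structural: the paper extracts the gain of $\epsilon\Omega/\sqrt{h_\epsilon}$ from the fiberwise Poincar\'e inequality (Proposition \ref{prop:analysis-asymptotic:poincare-inequality}) and absorbs $\|u_f\|_{L^2_\delta(P_\infty)}$ into the left-hand side of the global elliptic estimate, whereas you obtain the same gain by a mode-by-mode integration by parts; both rest on the identical coercivity mechanism, and your reading of the ALH hypothesis on $\epsilon$ as a substitute for the decay of the rescaled fiber radius at infinity matches the paper's.
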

\noindent
Using a standard argument one can directly conclude that $\Omega^{-2}\Delta_{g_\epsilon}$ is Fredholm.
\\

\noindent
In Section \ref{subset:analysis-asymptotic:fredholm-theory} we determine the kernel and co-kernel for small weights. First we will focus only on the model operator on $P_\infty$ and determine the (co)-kernel explicitly assuming Dirichlet boundary conditions. We will show that for ALF gravitational instantons, the weights can be chosen such that $\Omega^{-2} \Delta_{g_\epsilon} \colon C^{k+2, \alpha}_\delta(P_\infty) \to C^{k, \alpha}_\delta(P_\infty) $ is an isomorphism. This is not true in general, but in Corollary \ref{cor:analysis-asymptotic:laplace-bijectivity-dirichlet} we show that we can make it isomorphic by adding a single function to its domain.
\\

\noindent
Using this knowledge we finally extend our results globally. 
In Section \ref{sec:Fredholm-global} we will recover the result by \cite{Schroers2020}, that for $\delta \in (-1,0)$ and $ALF$ gravitational instantons, $\Omega^{-2} \Delta_{g_\epsilon} \colon C^{k+2, \alpha}_\delta(M_\epsilon) \to C^{k, \alpha}_\delta(M_\epsilon) $ is an isomorphism. For the other gravitational instantons we prove
\begin{theorem}
	\label{thm:analysis-asymptotic:laplace-bijectivity-global}
	Assume $B \not = \R^3$.
	Let $\rho$ be the radial function from Definition \ref{def:asymptotic-geometry:weighted-operator}.
	Let $\phi$ be a smooth function on $M_\epsilon$, such that $\phi = \rho$ near infinity and vanishes on the interior compact set. Let $\delta \in (-1, 0)$ with $|\delta|$ sufficiently small, $k \in \N_{\ge 2}$ and $\alpha \in (0,1)$. 
	For any $f \in C^{k-2,\alpha}_{\delta}(M_\epsilon)$ or $f \in W^{k,2}_\delta (M_\epsilon)$, there exists a unique $u \in W^{k, 2}_{\delta}(M_\epsilon) \oplus \R \phi$ or $u \in C^{k, \alpha}_{\delta}(M_\epsilon) \oplus \R \phi$ respectively, such that 
	\begin{align*}
			\Omega^{-2} \Delta_{g_\epsilon} u = f.
		\end{align*}
\end{theorem}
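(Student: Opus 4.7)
The plan is to combine the semi-Fredholm estimates of Theorem \ref{thm:analysis-asymptotic:special-perp-schauder-polynomial-case} with the model-operator bijectivity furnished by Corollary \ref{cor:analysis-asymptotic:laplace-bijectivity-dirichlet}, reducing the global statement to an index count together with the triviality of the kernel of the extended map.

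First I would verify that the extended operator
\[
T_\epsilon\colon W^{k,2}_\delta(M_\epsilon)\oplus \R\phi \longrightarrow W^{k-2,2}_\delta(M_\epsilon),
\qquad (v,c)\mapsto \Omega^{-2}\Delta_{g_\epsilon}(v + c\phi),
\]
is well-defined and bounded, and similarly for the H\"older analogue. This reduces to checking that $\Omega^{-2}\Delta_{g_\epsilon}\phi \in W^{k-2,2}_\delta(M_\epsilon)$. Since $\phi=\rho$ on the end and $\rho$ is a radial coordinate on the flat base $B$, a direct computation on the Gibbons-Hawking metric gives $\Omega^{-2}\Delta_{g^{GH}_\epsilon}\phi \equiv 0$ when $B = \R^+\times T^2$ and $\Omega^{-2}\Delta_{g^{GH}_\epsilon}\phi = \O(\rho^{-1})$ when $B = \R^2 \times S^1$, while the perturbation coming from $g_\epsilon - g^{GH}_\epsilon$ is $\O(\rho^{-\nu})$ by the standing assumption. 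Both contributions lie in $W^{k-2,2}_\delta$ for $\delta$ slightly negative. Theorem \ref{thm:analysis-asymptotic:special-perp-schauder-polynomial-case} combined with interior elliptic regularity then promotes $T_\epsilon$ to a semi-Fredholm operator.

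Next I would carry out the index and kernel computation. For $\delta\in(-1,0)$ the index of $\Omega^{-2}\Delta_{g_\epsilon}$ on $W^{k,2}_\delta(M_\epsilon)$ alone is $-1$: the kernel is trivial because any $L^2_\delta$-harmonic function decays and has zero Dirichlet energy, while the cokernel is one-dimensional, corresponding via $L^2$-duality to the constant function, which sits in the dual weighted space $W^{-(k-2),2}_{-\delta-\mu}$ for the weight shift $\mu$ dictated by the Riemannian volume of $g_\epsilon$ and the conformal factor $\Omega$. Since $\phi\notin W^{k,2}_\delta$, adjoining $\R\phi$ raises the index by one, so $\mathrm{ind}(T_\epsilon)=0$. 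To conclude bijectivity it is enough to rule out the kernel of $T_\epsilon$. Given $(v,c)$ with $T_\epsilon(v,c)=0$, I would restrict $u:=v+c\phi$ to the model end and invoke Corollary \ref{cor:analysis-asymptotic:laplace-bijectivity-dirichlet}, which on $P_\infty$ determines $u$ uniquely inside $W^{k,2}_\delta(P_\infty)\oplus \R\phi$ from its Dirichlet data on $\del P_\infty$. A global integration by parts on $M_\epsilon$—legitimate because $v$ decays like $\rho^\delta$ with $\delta<0$, and because the $c\phi$-piece enters only through an explicit, uniformly controllable boundary flux—then yields $\int_{M_\epsilon} |\d u|^2 = 0$, forcing $u$ to be constant and hence identically zero, together with $c=0$.

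The principal obstacle is the cokernel computation, i.e.\ identifying exactly one indicial obstruction in the range $\delta\in(-1,0)$. This requires careful bookkeeping of the indicial roots of the model Laplacian on the base $B$, the weight shift from $\Omega^{-2}$, and the non-standard volume growth of $g_\epsilon$, all of which depend on which non-ALE type is being treated; it is here that the hypotheses $B\not=\R^3$ and $|\delta|$ small really enter, ensuring that the only indicial roots met in $(-1,0)$ are the constant and the linearly-growing mode represented by $\phi$. Uniformity in $\epsilon$ of all constants is inherited from Theorems \ref{thm:analysis-asymptotic:schauder-global-holder} and \ref{thm:analysis-asymptotic:special-perp-schauder-polynomial-case}, but each estimate in the proof must be traced through the reduction to confirm this uniformity.
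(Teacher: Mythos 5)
Your overall architecture (semi-Fredholmness from Theorem \ref{thm:analysis-asymptotic:special-perp-schauder-polynomial-case}, model bijectivity from Corollary \ref{cor:analysis-asymptotic:laplace-bijectivity-dirichlet}, then an index count plus kernel triviality) is reasonable, and your surjectivity bookkeeping is close in spirit to the paper's Proposition \ref{prop:global-analysis:laplace-surjectivive-not-R3}. But the step on which everything hinges --- triviality of the kernel of the extended operator --- does not work as you describe. If $u=v+c\phi$ with $c\neq 0$, the boundary flux in $\int_{U_r}u\,\Delta u = -\int_{U_r}|\d u|^2 + \int_{\del U_r}u\,{*\d u}$ contains the term $c^2\int_{\del U_r}\phi\,{*\d\phi}$, which grows like $\rho$ (and correspondingly $\int_{M_\epsilon}|\d\phi|^2_{g_\epsilon}=\infty$ for $B\neq\R^3$), so you cannot conclude $\int_{M_\epsilon}|\d u|^2=0$; the flux is not ``uniformly controllable,'' it diverges. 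Pairing against $v$ or $\phi$ instead does not rescue the argument. The paper's Proposition \ref{prop:global-analysis:laplace-injective-not-R3} avoids integration by parts entirely: assuming a kernel element $v+\phi$ exists, one shows $\Omega^{-2}\Delta_{g_\epsilon}$ would already be surjective on $W^{2,2}_\delta(M_\epsilon)$ alone, hence $L_\delta$ surjective and its formal adjoint $L_{-\delta}$ injective --- contradicting that constants lie in $\ker L_{-\delta}$ for $-\delta>0$. Note also that your index count ($\mathrm{coker}$ exactly one-dimensional at $\delta<0$) secretly requires ruling out a global harmonic function asymptotic to $\rho$ in $W^{2,2}_{-\delta}$, which is essentially the same statement you are trying to prove; without the duality argument this is circular.

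A second gap: the H\"older case is not ``similar.'' The space $C^{0,\alpha}_\delta(M_\epsilon)$ embeds into $L^2_{\tilde\delta}(M_\epsilon)$ only for $\tilde\delta>\delta$, so the Sobolev isomorphism produces a solution with the \emph{wrong} weight, and one must recover the weight $\delta$. The paper does this with Lemma \ref{lem:uniformity-delta} (uniformity of the Schauder constant as $\tilde\delta\to\delta$) followed by a pointwise argument choosing, for each $x$, a weight $\tilde\delta(x)>\delta$ with $e^{(\tilde\delta-\delta)\rho(x)}\le 2$. Your proposal should either reproduce this limiting argument or supply an independent route to the sharp H\"older weight; as written it omits the issue. (Minor points: elements of $W^{k,2}_\delta$ decay like $e^{\delta\rho}$, not $\rho^\delta$, in this paper's conventions, and for $B\neq\R^3$ the adjoint weight shift is zero, i.e.\ $L_\delta^*=L_{-\delta}$, by Proposition \ref{prop:analysis-asymptotic:cokernel}.)
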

\noindent
To show this we use the formal adjoint of $\Omega^{-2} \Delta$ and the maximum principle to get some partial results. Using Corollary \ref{cor:analysis-asymptotic:laplace-bijectivity-dirichlet}, we improve these results and get Theorem \ref{thm:analysis-asymptotic:laplace-bijectivity-global} for the Sobolev case. By embedding the H\"older space inside the Sobolev spaces by tweaking the weight, we show this result is also true for the other case.
\\

\noindent
\textbf{Acknowledgements.} 
This paper contains the analytical results found in my PhD thesis "Construction of gravitational instantons with non-maximal volume growth", funded by the Royal Society research grant RGF\textbackslash R1\textbackslash180086. A special thanks goes to my supervisor Lorenzo Foscolo, without whom this work would not have been possible. 

\section{Weighted elliptic estimates}
\label{sec:weighted-norms}
\noindent
A standard tool for elliptic operators on non-compact manifolds is the use of weighted spaces. We will use weighted operators on an unweighted Banach space instead. 
To explain this, we shortly revisit the Laplacian $\Delta$ on $\R^3$.
Instead of working with $\Delta$, let $\delta \in \R$ and consider the weighted operator $L_\delta := r^{2-\delta} \Delta^{Eucl} (r^\delta \ldots)$ on the fixed Banach space induced by the metric $g_{cf} := r^{-2} g_{Eucl}$ instead.
This operator is strictly elliptic and hence for any pair of small open balls $B_{r_1}(x) \subset \subset B_{r_2}(x)$, we have the Schauder estimate	
\begin{equation}
		\label{eq:analysis-asymptotic:example-weighted-estimate}
	\|u\|_{W^{2,2}_{cf}(B_{r_1}(x))} \le 
	C \left[
	\|r^{2 - \delta} \Delta (r^{\delta}u)\|_{W^{2,2}_{cf}(B_{r_2}(x))} 
	+ \|u\|_{L^{2}(B_{r_2}(x))} 
	\right].
\end{equation}
Because the coefficients of $L_\delta$ can be bounded uniformly in $x$ and $g_{cf}$ is the cylindrical metric on $\R \times S^2$, the constant $C$ can be chosen uniformly in $x$. Hence by a summation argument one can find $0 < R' < R$ such that 
\begin{equation*}
	\|u\|_{W^{2,2}_{cf}(\R^3 \setminus B_R(0))} \le 
	C \left[
	\|r^{2 - \delta} \Delta (r^{\delta}u)\|_{W^{2,2}_{cf}(\R^3 \setminus B_{R'}(0))} 
	+ \|u\|_{L^{2}(\R^3 \setminus B_{R'}(0))} 
	\right].
\end{equation*}
In \cite{Bartnik1986}, we have the weighted norm 
$$\| \ldots \|^2_{W^{k,2}_\delta(\R^3\setminus K)} := \sum_{j=0}^{k} \|r^{j- \delta - \frac{3}{2}}\: \nabla^j \ldots\|^2_{L^{2}(\R^3 \setminus K)}$$
with $\delta \in \R$, $k \in \Z_{\ge 0}$ and $K \subset \R^3$ compact. 
Because (the derivatives of ) $ \d \log r$ are bounded in $g_{cf}$, the metric $\| \ldots \|_{W^{k,2}_\delta}$ is equivalent to $\| r^{- \delta} \ldots \|_{W^{k,2}_{cf}}$. So we can rephrase our estimate in terms of weighted norm:
$$
\|u\|_{W^{2,2}_{\delta}(\R^3 \setminus B_R(0))} \le 
C \left[
\|\Delta u\|_{W^{2,2}_{\delta - 2}(\R^3 \setminus B_{R'}(0))} 
+ \|u\|_{L^{2}_\delta(\R^3 \setminus B_{R'}(0))} 
\right].
$$
The upshot of this calculation is that studying weighted norms is equivalent to studying weighted operators and these weighted operators can be studied using the standard regularity results, which are already well established in literature.

\subsection{Definition of the weighted norms}
\label{subset:analysis-asymptotic:bounded-geometry}
\noindent
In order to apply the above 'weighted operator' method for our gravitational instantons, we need to study the dependence of the constant $C$ on the collapsing parameter $\epsilon$ in equation \ref{eq:analysis-asymptotic:example-weighted-estimate}.
In the above example, we used the translation and rotation invariance of the metric $g_{cf} = \d \log r^2 + g_{S^2}$ and uniform bounds on the coefficients of $L_\delta$ to argue that $C$ does not depend on $x$. Instead using the symmetries of $g_{cf}$, one can make the same conclusion using \textit{bounded geometry}: A complete Riemannian manifold $(M,g)$ has bounded geometry if the injectivity radius is bounded below and the first $k$ derivatives of the Ricci curvature are bounded above, for some $k \in \N_0$.
Riemannian manifolds with bounded geometry are interesting, because they can be equipped with coordinate charts that are suitable for analysis:

\begin{theorem}[Theorem 1.2 in \cite{Hebey1999}]
	\label{thm:analysis-asymptotic:hebey}
	Let $k \in \N$, $\alpha \in (0,1)$ and $Q > 1$. Let $(M,g)$ be a Riemannian manifold with bounded geometry. There exists a constant $r_H > 0$ such that for any $p \in M$,
	there are coordinates $\{x_i\}$ on $B_{r_H}(p)$ that satisfy $Q^{-1} \delta_{\mu\nu} < g_{\mu\nu} < Q \: \delta_{\mu\nu}$ as bilinear forms, and
	\begin{align*}
		\sum_{1 < j < k} r_H^j \: \sup_{ y \in B_r(p)}|\del^{(j)} g(y)| + r_H^{k + \alpha} \sup_{\substack{y,z \in B_r(p)\\ y \not = z}} \frac{|\del^{(k)} g(y) - \del^{(k)} g(z)|}{|y-z|^\alpha} < Q-1.
	\end{align*}
	The constant $r_H$ depends on $k$, $\alpha$, $Q$, the injectivity radius and the $C^0$ norm of (the first $k$ derivatives of) the Ricci tensor.
\end{theorem}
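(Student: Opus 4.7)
The plan is to construct harmonic coordinates on a ball of uniform radius at each $p \in M$ and then promote $C^0$ control of the metric to $C^{k,\alpha}$ control via the harmonic gauge formula for the Ricci tensor. The construction follows the Jost--Karcher / DeTurck--Kazdan approach.

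First I would fix $p \in M$ and introduce geodesic normal coordinates $y^i$ via $\exp_p^{-1}$, which are defined on $B_\iota(p)$, where $\iota > 0$ is the lower bound on the injectivity radius. Standard Jacobi field comparison, which uses only the $C^0$ bound on $\Ric$, shows that there exists $r_0 > 0$ depending solely on $\iota$ and $\|\Ric\|_{C^0}$ on which the coordinate representation $g_{\mu\nu}(y)$ satisfies a two-sided bound $Q_0^{-1} \delta_{\mu\nu} < g_{\mu\nu} < Q_0\,\delta_{\mu\nu}$ for any prescribed $Q_0$ arbitrarily close to $1$.

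Second, I would solve the Dirichlet problem $\Delta_g x^i = 0$ on $B_{r_0}(p)$ with boundary values $x^i = y^i$ on $\del B_{r_0}(p)$. The maximum principle makes $\|x^i - y^i\|_{C^0}$ small, and interior Schauder estimates applied in the normal-coordinate chart (where $\Delta_g$ has coefficients controlled by the $C^0$ norm of $g$) upgrade this to smallness in $C^1$ on a slightly smaller ball. Hence $(x^1, \dots, x^n)$ is a diffeomorphism from $B_{r_H}(p)$ onto its image for some $r_H < r_0$ depending only on $\iota$ and $\|\Ric\|_{C^0}$, and the metric in these new coordinates still satisfies $Q^{-1}\delta_{\mu\nu} < g_{\mu\nu} < Q\,\delta_{\mu\nu}$.

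Third, in harmonic coordinates one has the identity
\begin{equation*}
	-\tfrac{1}{2}\, g^{kl} \del_k \del_l g_{ij} + Q_{ij}(g, \del g) = \Ric_{ij},
\end{equation*}
where $Q_{ij}$ is a polynomial quadratic in $\del g$ with coefficients rational in $g$. This is a quasilinear elliptic system for $g_{ij}$, uniformly elliptic thanks to the previous step. Interior Schauder estimates together with a bootstrap then convert the assumed bounds on the first $k$ derivatives of $\Ric$ into the desired $C^{k,\alpha}$ bound on $g_{ij}$; a rescaling $y \mapsto r_H\, y$ yields the scale-invariant form stated in the theorem.

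The main obstacle is the apparent circularity — the elliptic estimates need $C^0$ control of $g$ on a ball of definite size, while that size seems to depend on $g$. The resolution is to decouple the two ingredients: use only $\iota$ and $\|\Ric\|_{C^0}$ through Jacobi field comparison to produce a definite $r_0$ with $g$ a priori close to Euclidean, and only afterwards invoke linear elliptic theory to pass to harmonic coordinates and to bootstrap regularity from the $C^{k-2}$ bound on $\Ric$.
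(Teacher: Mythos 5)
The paper does not prove this statement at all --- it is quoted verbatim as Theorem 1.2 of Hebey's lecture notes --- so there is no internal proof to compare against; what can be assessed is whether your sketch is a valid proof of the cited result. It is not, because of Step 1. You claim that Jacobi field comparison, ``which uses only the $C^0$ bound on $\Ric$,'' yields two-sided bounds $Q_0^{-1}\delta_{\mu\nu} < g_{\mu\nu} < Q_0\,\delta_{\mu\nu}$ in geodesic normal coordinates on a ball of definite radius. Rauch-type comparison controls individual Jacobi fields only under two-sided \emph{sectional} curvature bounds; a bound on $\Ric$ controls only traced quantities (Bishop--Gromov volume comparison, the Laplacian of the distance function), and individual directions can stretch and contract in compensating ways. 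Since the hypothesis of the theorem (and of the paper's definition of bounded geometry) is an injectivity radius lower bound together with bounds on $\Ric$ and its first $k$ derivatives --- with no sectional curvature hypothesis --- your foundational step fails, and with it the uniform ellipticity needed for Steps 2 and 3. The direct construction you describe is essentially the Jost--Karcher / DeTurck--Kazdan argument, which is correct \emph{under sectional curvature bounds} but does not apply here.

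The proof of the theorem as actually stated (Anderson; Anderson--Cheeger; reproduced in Hebey) is instead a blow-up contradiction argument: if no uniform harmonic radius existed, one rescales a sequence of counterexamples so that the harmonic radius at the base points is normalized to $1$; the rescaled metrics have curvature tending to $0$ and injectivity radius tending to $\infty$, so by Cheeger--Gromov compactness they converge in $C^{1,\alpha'}$ (in harmonic coordinates) to flat $\R^n$; continuity of the harmonic radius under this convergence forces the limit to have harmonic radius $1$, contradicting the fact that flat space has infinite harmonic radius. Your Step 3 --- the elliptic system $-\tfrac12 g^{kl}\del_k\del_l g_{ij} + Q_{ij}(g,\del g) = \Ric_{ij}$ in harmonic gauge and the Schauder bootstrap from bounds on $\nabla^j\Ric$ --- is correct and is indeed how the $C^{k,\alpha}$ (rather than merely $C^{1,\alpha}$) control is obtained once a $C^{1,\alpha}$ harmonic chart of definite size exists; but producing that chart from Ricci bounds alone is exactly the content that your sketch does not supply.
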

\noindent
The coordinates found in Theorem \ref{thm:analysis-asymptotic:hebey} are called harmonic coordinates, because they satisfy $\Delta x_i = 0$. Like the injectivity radius estimates the largest ball on which the Riemann normal coordinates are defined, measures $r_H$ the largest ball on which the harmonic coordinates has $C^{k, \alpha}$ control of the metric. Therefore, the constant $r_H$ is referred as the harmonic radius in literature.
\\

\noindent
Theorem \ref{thm:analysis-asymptotic:hebey} gives an alternative proof that the constant $C$ in equation \ref{eq:analysis-asymptotic:example-weighted-estimate} does not depend on $x$.
Namely, if the constants $r_1, r_2$ in Equation \ref{eq:analysis-asymptotic:example-weighted-estimate} are less than the harmonic radius, Equation \ref{eq:analysis-asymptotic:example-weighted-estimate} follows directly from the standard Schauder estimate in $\R^3$ and the local $C^{k, \alpha}$ equivalence of the metric. Moreover, one can find harmonic coordinates centred at every $x \in \R^3$.
\\

\noindent
If the bounds on the injectivity radius and the Ricci tensor are uniform in the scaling parameter $\epsilon$, Theorem \ref{thm:analysis-asymptotic:hebey} imply our estimates are uniform in $\epsilon$. With this in mind, we now will define a conformal rescaling for our family of gravitational instantons. 
\begin{definition}
	\label{def:asymptotic-geometry:metric-cf}
	Let $g^{GH} = h_\epsilon g_B +  \epsilon^2 h_\epsilon^{-1} \eta^2$ be the Gibbons-Hawking metric. Let $B$ be defined in Table \ref{table:conditions-gibbons-Hawking} and $r$ be the radial parameter defined in Remark \ref{remark:radially-invariant-connection}.
	Let $\Omega$ be a strictly positive function on $M_\epsilon$ such that
	$$
	\Omega|_{P_\infty} = \begin{cases}
		h_\epsilon^{-\frac{1}{2}} &\text{if } B = \R \times T^2\\
		r^{-1} h_\epsilon^{-\frac{1}{2}} &\text{otherwise.}
	\end{cases}
	$$
	Define the conformally rescaled metric $g_{cf}$ as
	$$
	g_{cf} = \Omega^2 \cdot g_\epsilon.
	$$
\end{definition}
\noindent
The difference between these cases is due to the fact that $S^1$-invariant harmonic functions on $\R \times T^2$ have exponential rather than polynomial growth or decay. 
The conformal rescaling of $h_\epsilon^{-1}$ in $\Omega$ is convenient, because for $S^1$ invariant functions the analysis reduces to the standard analysis on $\R^n$. This is due to the fact that $h_\epsilon \Delta_{g^{GH}_\epsilon} = \Delta^{B}$ for $S^1$ invariant functions. \\

\noindent
To check whether this metric has bounded geometry, one first need to estimate the derivatives of the Ricci tensor. Because the Gibbons-Hawking metric is given explicitly, one can find the Christoffel symbols using the Koszul formula. Using explicit calculation one can show:
\begin{lemma}
	\label{lem:analysis-asymptotic:christoffel-gtilde}
	On the asymptotic region $(P_\infty, g_{cf})$, The Ricci curvature tensor and its first $k$ covariant derivatives are given in terms of the first $k+1$ covariant derivatives of $\d \log \Omega$. In particular, these are uniformly bounded for $\epsilon \in (0,1)$.
\end{lemma}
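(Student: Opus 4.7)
The plan is to combine the Ricci-flatness of $g^{GH}_\epsilon$ with the conformal change formula for the Ricci tensor, and then verify uniform bounds case by case. Since $g^{GH}_\epsilon$ is \hk, it is Ricci-flat, and on a $4$-manifold the standard conformal change formula with $g_{cf} = \Omega^2 g^{GH}_\epsilon$ gives
\[
\Ric(g_{cf}) = \Ric(g^{GH}_\epsilon) - 2\nabla^2 \log\Omega + 2\, \d\log\Omega \otimes \d\log\Omega - \bigl(\Delta \log\Omega + 2|\d\log\Omega|^2\bigr) g^{GH}_\epsilon,
\]
where the derivatives on the right are taken with respect to $g^{GH}_\epsilon$. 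The first term vanishes, so $\Ric(g_{cf})$ is a universal algebraic expression in $\d\log\Omega$ and $\nabla \d\log\Omega$. On $P_\infty$ the metric $g_\epsilon$ may be replaced by $g^{GH}_\epsilon$ up to errors controlled by the assumptions in the setup, so it suffices to prove the lemma for the model metric.

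To extend this to higher covariant derivatives, I would apply $(\nabla^{g_{cf}})^k$ to the identity above. The Levi-Civita connections of $g_{cf}$ and $g^{GH}_\epsilon$ differ by a $(1,2)$-tensor built algebraically from $\d\log\Omega$, so by induction on $k$ one obtains that $(\nabla^{g_{cf}})^k\Ric(g_{cf})$ is a universal polynomial in $\d\log\Omega, \nabla \d\log\Omega, \ldots, \nabla^{k+1} \d\log\Omega$. This establishes the structural part of the lemma.

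For the uniform boundedness in $\epsilon \in (0,1)$, I would argue case by case using Table \ref{table:conditions-gibbons-Hawking}. In the ALH case, $h$ is constant, so $\log\Omega$ is constant and all derivatives vanish; hence $\Ric(g_{cf}) \equiv 0$. In the remaining cases one writes $\log\Omega = -\log r - \tfrac{1}{2}\log h_\epsilon$ (dropping the $-\log r$ summand when $B = \R^+\times T^2$) and checks directly that $\d\log r$ and $\tfrac{\epsilon}{2 h_\epsilon}\d h$, together with their $g_{cf}$-covariant derivatives, are bounded uniformly in $r$ and $\epsilon$. The key cancellation is that the conformal rescaling by $r^{-2}$ on the base produces $g_{cf}^{-1}(\d r, \d r) = r^2$, giving $|\d\log r|_{g_{cf}} = 1$, while $\tfrac{\epsilon|\d h|_{g_B}}{h_\epsilon} \le \tfrac{|\d h|_{g_B}}{h}$ on $P_\infty$, which is bounded by the explicit form of $h$.

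The main obstacle will be the bookkeeping in this final step: $g_{cf}$ is a warped-product metric whose collapsing directions (in $s$, $\eta$, or the $T^2$ factor, depending on the case) have warping factors depending on both $r$ and $\epsilon$, and one must verify that taking covariant derivatives in these directions produces exactly the compensating factors required for uniform bounds. After passing to cylindrical-type coordinates such as $t = \log r$, each derivative $\partial_t$ of the warping function reproduces that same function, so the compensation works out, but the ALG$^*$ and ALH$^*$ cases, in which $h$ itself depends non-trivially on $r$, require careful tracking of the $\epsilon$-dependence to ensure the bounds remain uniform as $\epsilon \to 0$.
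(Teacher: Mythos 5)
Your argument is correct, and it takes a genuinely cleaner route than the one the paper indicates. The paper's proof is a direct computation: write out the Christoffel symbols of $g_{cf}$ via the Koszul formula for the explicit Gibbons--Hawking metric and assemble the curvature by hand. You instead exploit Ricci-flatness of the hyperk\"ahler model $g^{GH}_\epsilon$, so the four-dimensional conformal transformation law immediately exhibits $\Ric(g_{cf})$ as a universal algebraic expression in $\d\log\Omega$ and $\nabla\,\d\log\Omega$, and your inductive step for higher derivatives (the two Levi-Civita connections differ by a tensor algebraic in $\d\log\Omega$) is precisely the structural assertion of the lemma. This explains \emph{why} the answer is organized around $\d\log\Omega$ rather than merely verifying it after the fact. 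What the direct computation buys in exchange is that it applies to $g_\epsilon$ itself; in your version you must invoke the hypothesis $|\nabla^k(g_\epsilon - g^{GH}_\epsilon)| = \O(r^{-k-\nu})$, uniform in $\epsilon$, to transfer the conclusion from the model, and you should note that for $g_\epsilon$ the structural claim then only holds up to this controlled error term, since $\Ric(g_\epsilon)$ need not vanish exactly. Your case-by-case verification of uniform boundedness --- $|\d\log r|_{g_{cf}}=1$ from the $r^{-2}$ rescaling, and $\epsilon|\d h|/h_\epsilon \le |\d h|/h$ bounded for each explicit $h$ in Table \ref{table:conditions-gibbons-Hawking} --- is the right computation, and the remaining bookkeeping for higher $g_{cf}$-covariant derivatives in the collapsing directions is genuine but routine, which is no less detail than the paper itself supplies.
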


\noindent
Before we continue our study in the bounded geometry of $P_\infty$, we first define the weighted operator for the Laplacian and check whether it is elliptic.
\begin{definition}
	\label{def:asymptotic-geometry:weighted-operator}
	Let $\Omega$ be as described in Definition \ref{def:asymptotic-geometry:metric-cf} and $r$ be the radial parameter defined in Remark \ref{remark:radially-invariant-connection}.
	Let $\rho$ be a strictly positive function on $M_\epsilon$ such that
	$$
	\rho|_{P_\infty} = \begin{cases}
		r &\text{if } B = \R \times T^2\\
		\log r &\text{otherwise}.
	\end{cases}
	$$
	For all $\delta \in \R$, We define the weighted operator $L_\delta$ as
	$
	e^{- \delta \rho} \Omega^{-2} \Delta_{g_\epsilon} (e^{\delta \rho} \ldots).
	$
\end{definition}
\noindent
Using the Koszul formula, one can show $\d \rho$ has norm one and all its derivatives are bounded uniformly for $\epsilon \in (0,1)$. Therefore, we use this as the radial parameter by which we will measure decay. Using the bounds on $\d \rho$ and its higher derivatives, one can show by explicit calculation that $L_\delta$ is strictly elliptic in the sense of \cite{Gilbarg2001}:
\begin{proposition}
	\label{prop:analysis-asymptotic:laplace-equation-expansion}
	For each $\delta \in \R$, the operator $L_\delta$ is a strictly elliptic operator with bounded coefficients between $C^{k+2, \alpha}_{cf}(P_\infty)$ and $C^{k, \alpha}_{cf}(P_\infty)$, uniformly in $\epsilon \in (0,1)$. That is, if one considers the local coordinates given in Theorem \ref{thm:analysis-asymptotic:hebey} and expands $L_\delta$ as
	$$
	L_\delta = a^{ij} \del_i \del_j + b^{i} \del_i + c,
	$$
	then there exist $\lambda, \Lambda > 0$, independent of $\epsilon$, such that
	$$
	- a^{ij} \xi_i \xi_j \ge \lambda |\xi|^2 \qquad \forall \xi \in \R^4
	$$
	and $\| a^{ij}\|_{C^{0, \alpha}_{cf}}, \| b^{i}\|_{C^{0, \alpha}_{cf}}, \| c\|_{C^{0, \alpha}_{cf}} < \Lambda$.
\end{proposition}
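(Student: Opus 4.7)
The plan is to write $L_\delta$ explicitly as a scalar differential operator on $(P_\infty, g_{cf})$ and then verify strict ellipticity and the coefficient bounds directly in the harmonic charts supplied by Theorem \ref{thm:analysis-asymptotic:hebey}.

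First, I would combine the conformal transformation law for the Laplacian in dimension four with the standard conjugation by $e^{\delta\rho}$. The relation $g_{cf} = \Omega^2 g_\epsilon$ in dimension four gives $\Omega^{-2}\Delta_{g_\epsilon} u = \Delta_{g_{cf}} u - 2\langle \nabla\log\Omega, \nabla u\rangle_{g_{cf}}$, while the conjugation identity $e^{-\delta\rho}\Delta_{g_{cf}}(e^{\delta\rho}u) = \Delta_{g_{cf}} u + 2\delta\langle \nabla\rho, \nabla u\rangle + (\delta^2|\nabla\rho|^2 + \delta\Delta_{g_{cf}}\rho)\,u$ is a direct consequence of the product rule. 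Combining them and conjugating the lower-order term in the same fashion yields
\[
L_\delta u = \Delta_{g_{cf}} u + 2\langle \delta\nabla\rho - \nabla\log\Omega, \nabla u\rangle_{g_{cf}} + \bigl(\delta^2|\nabla\rho|^2 + \delta\Delta_{g_{cf}}\rho - 2\delta\langle \nabla\log\Omega, \nabla\rho\rangle\bigr)\,u,
\]
with all inner products, norms and derivatives taken with respect to $g_{cf}$.

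The principal part of $L_\delta$ therefore coincides with $\Delta_{g_{cf}}$. Lemma \ref{lem:analysis-asymptotic:christoffel-gtilde} together with a uniform lower bound on the injectivity radius (a consequence of the bounded geometry of $g_{cf}$) lets me apply Theorem \ref{thm:analysis-asymptotic:hebey} with a fixed $Q$ slightly larger than $1$, producing a harmonic radius $r_H > 0$ that is uniform in $\epsilon$ and in the centre of the chart. In the resulting coordinates $Q^{-1}\delta^{ij} \le g_{cf}^{ij} \le Q\,\delta^{ij}$, which gives the strict ellipticity constant $\lambda = Q^{-1}$ independently of $\epsilon$.

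It remains to bound the lower-order coefficients in $C^{0,\alpha}_{cf}$. The remark following Definition \ref{def:asymptotic-geometry:weighted-operator} already supplies $|\nabla\rho|_{g_{cf}} = 1$ and uniform bounds on all higher covariant derivatives of $\rho$, so $\Delta_{g_{cf}}\rho$ is uniformly controlled via the Koszul formula. For the factors involving $\nabla\log\Omega$, I would substitute the explicit expressions for $\Omega$ from Definition \ref{def:asymptotic-geometry:metric-cf} and for $h$ from Table \ref{table:conditions-gibbons-Hawking}, using that $h > 0$ forces $h_\epsilon \ge 1$ and hence $\epsilon/h_\epsilon \le 1$; a brief case-by-case check of the five columns of Table \ref{table:conditions-gibbons-Hawking} then shows that $|\nabla\log\Omega|_{g_{cf}}$ and its covariant derivatives are bounded independently of $\epsilon$. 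Hölder control in harmonic coordinates follows from the $C^{k,\alpha}$ metric bounds of Theorem \ref{thm:analysis-asymptotic:hebey}. The hard part will be purely bookkeeping across the five model cases rather than any subtle analytic step: $h_\epsilon \ge 1$ automatically controls every denominator and $\rho$ was designed precisely so that $|d\rho|_{g_{cf}} \equiv 1$, so no delicate cancellation is needed. Finally, the correction $g_\epsilon - g_\epsilon^{GH} = \O(r^{-\nu})$ in every derivative is too small to affect either ellipticity or the coefficient bounds, so the same argument carries through to $g_\epsilon$ after possibly shrinking $P_\infty$.
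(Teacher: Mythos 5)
Your explicit formula for $L_\delta$ (conformal transformation of $\Delta$ in dimension four plus conjugation by $e^{\delta\rho}$) is exactly the ``explicit calculation'' the paper invokes but does not write out, and your bookkeeping for the coefficients --- $|\d\rho|_{cf}=1$, $h_\epsilon\ge 1$ controlling the denominators in $\nabla\log\Omega$, the $\O(r^{-k-\nu})$ correction for $g_\epsilon-g_\epsilon^{GH}$ being harmless --- is the right content and matches the paper's intent.

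There is, however, one genuinely false step: you assert that a uniform lower bound on the injectivity radius is ``a consequence of the bounded geometry of $g_{cf}$.'' It is not --- bounded geometry \emph{requires} an injectivity radius bound as a hypothesis, and on $(P_\infty, g_{cf})$ that bound fails: the fibre circumference is of order $\epsilon\,\Omega\, h_\epsilon^{-1/2}$, which tends to zero at infinity (and as $\epsilon\to 0$) in every model case, so the injectivity radius collapses and Theorem \ref{thm:analysis-asymptotic:hebey} cannot be applied directly on $P_\infty$ with an $\epsilon$-independent harmonic radius. Since uniformity in $\epsilon$ is the entire point of the proposition, this gap must be closed: one passes to the local universal covers of the fibres, where Lemma \ref{lem:analysis-asymptotic:injectivity-radius} (via the volume lower bound of Cheeger--Gromov--Taylor) gives the uniform injectivity radius, applies Theorem \ref{thm:analysis-asymptotic:hebey} there, and then descends using the deck-transformation invariance of $L_\delta$ together with Lemma \ref{lem:analysis-asymptotic:function-covering-space}, which says H\"older norms are unchanged under the lift. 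With that substitution your argument goes through; everything else you wrote is correct.
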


\noindent
We return to the study of the bounded geometry of $P_\infty$. Except for the injectivity radius, all conditions stated in Theorem \ref{thm:analysis-asymptotic:hebey} are satisfied by Lemma \ref{lem:analysis-asymptotic:christoffel-gtilde}. However, for ALG* and ALH* gravitational instantons the injectivity radius decays to zero, because the circle fibers decay at infinity. To remedy this, we replace the fibers with their universal cover. To be precise, we will consider local trivialisations over sufficiently large, contractible open sets and we work on the universal cover over these trivialisations. We claim that, on these local universal covers, the injectivity radius is bounded below. For this we use a result by \cite{Cheeger1982}, which states that it is sufficient to get a lower bound on $\Vol_{cf}(B_1(p))$ for all $p \in P_\infty$. Secondly, we will determine how the Sobolev and H\"older norms change when we project them back to neighborhoods on $P_\infty$.

\begin{lemma}
	\label{lem:analysis-asymptotic:injectivity-radius}
	On local universal covers of $P_\infty$, the injectivity radius is bounded below, uniformly in $\epsilon \in (0,1)$.
\end{lemma}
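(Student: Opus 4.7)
The plan is the one spelled out in the paragraph preceding the lemma. By Lemma~\ref{lem:analysis-asymptotic:christoffel-gtilde}, the Ricci tensor of $g_{cf}$ and its covariant derivatives are uniformly bounded in $\epsilon$, so the theorem of \cite{Cheeger1982} reduces the statement to a uniform lower bound $\Vol_{g_{cf}}(B_1(\tilde p)) \geq c > 0$ for every point $\tilde p$ in a local universal cover. So the body of the proof is a volume estimate.

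First I would fix, around any point $p \in P_\infty$, a contractible trivializing open set $U$ in the base $B$ over which the circle bundle is trivial. The universal cover of $P_\infty|_U$ is then $\tilde U = U \times \R$, and the pull-back of $g_{cf}$ to $\tilde U$ reads
$$
g_{cf}\big|_{\tilde U} \;=\; \Omega^2 g_B \;+\; \epsilon^2 \Omega^2 h_\epsilon^{-2}\,(dt + A)^2,
$$
with $t$ the fiber coordinate on $\R$ and $A$ the local one-form representing $\eta$. This exhibits $(\tilde U, g_{cf})$ as a Riemannian submersion onto $(U, \Omega^2 g_B)$ with simply connected fibers.

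By the choice of $\Omega$ in Definition~\ref{def:asymptotic-geometry:metric-cf}, and using that $h_\epsilon$ is uniformly bounded away from $0$ and $\infty$ on any fixed $\rho$-piece, the base metric $\Omega^2 g_B$ has uniformly bounded geometry: up to a bounded conformal factor it is the standard cylindrical metric dictated by the asymptotic type. In particular, I get a uniform lower bound $c_0 > 0$ on the $(\Omega^2 g_B)$-volume of the base ball of radius $1/4$ centered at $\pi(\tilde p)$. In the fiber direction, the essential point is that the cover is simply connected, so a fiber geodesic of $g_{cf}$-arclength $1/4$ starting at $\tilde p$ never closes up, no matter how small $\epsilon\,\Omega\,h_\epsilon^{-1}$ becomes.

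To finish, I concatenate horizontal and vertical segments to fill the ball. Since the connection curvature $dA = d\eta = *\,dh$ is uniformly bounded in $g_{cf}$ by Lemma~\ref{lem:analysis-asymptotic:christoffel-gtilde}, any point reachable by a horizontal $(\Omega^2 g_B)$-geodesic of length at most $1/4$ followed by a vertical segment of length at most $1/4$ lies inside $B_1(\tilde p)$, and the Fubini formula for Riemannian submersions gives $\Vol_{g_{cf}}(B_1(\tilde p)) \geq c_0/4$. The main technical obstacle I expect is this last distance comparison $d_{g_{cf}} \leq d_{\text{base}} + d_{\text{fiber}}$, uniform in $\epsilon$; it reduces to a pointwise bound on $A$ in the chosen trivialization, which follows from the $r$-invariant gauge choice in Remark~\ref{remark:radially-invariant-connection} together with the uniform bounds on $\Omega$ and $h_\epsilon$.
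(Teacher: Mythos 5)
Your overall strategy is the same as the paper's (reduce to a uniform lower volume bound via \cite{Cheeger1982} and compute the volume of a region in the local universal cover fibered over a base ball), and your horizontal--vertical path argument for placing that region inside $B_1(\tilde p)$ is fine --- in fact it is essentially automatic, since the $g_{cf}$-length of a horizontal lift equals the base length, so no pointwise bound on $A$ is needed. However, there is a genuine gap in the case $B = \R^2 \times S^1$ (ALG and ALG*). There your claim that the rescaled base metric ``has uniformly bounded geometry'' is false: the rescaling is $\Omega^2 h_\epsilon g_B = r^{-2} g_B = \d\rho^2 + \d\theta^2 + e^{-2\rho} g_{S^1}$, so the $S^1$ factor of the \emph{base} collapses as $\rho \to \infty$, the base ball of radius $1/4$ wraps around that circle many times, and its volume tends to zero. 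Consequently your $c_0$ does not exist, and unwrapping only the circle fiber of $P_\infty \to B'$ does not save you: the volume of $B_1(\tilde p)$ in $U \times \R$ still degenerates at infinity. The paper addresses exactly this point by treating $P_\infty$ in the ALG/ALG* case as a $T^2$-bundle over (a subset of) $\R^2$ and passing to the cover that unwraps \emph{both} collapsing circles simultaneously; your definition of the local universal cover must be modified accordingly, after which your volume argument goes through.

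Two smaller remarks. First, your formula for the pulled-back metric is off: the base part of $g_{cf}$ is $\Omega^2 h_\epsilon\, g_B$ (not $\Omega^2 g_B$) and the fiber part is $\epsilon^2 \Omega^2 h_\epsilon^{-1} \eta^2$; this does not affect the structure of the argument but it is precisely the factor $\Omega^2 h_\epsilon = r^{-2}$ (or $1$) that produces the collapsing base circle discussed above, so the slip obscures the real difficulty. Second, for the Cheeger--Gromov--Taylor bound you should make sure the ball $B_1(\tilde p)$ is actually contained in the local cover, i.e.\ choose the trivializing set $U$ large enough (uniformly in $p$ and $\epsilon$) that the boundary of $U \times \R$ is at $g_{cf}$-distance greater than $1$ from $\tilde p$; the paper handles this by exhibiting an explicit circumscribed ball of uniformly bounded radius around its rectangular neighborhood.
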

\begin{proof}
	We explain the case $B = \R \times T^2$.
	Pick $p = (x_0,0) \in P_\infty$ and choose $\varrho > 0$ such that the ball $B_\varrho(x_0) \subset B$ is contractible. Next, we trivialize $P|B_\varrho(x_0) \simeq B_\varrho(x_0) \times S^1$ and consider the following rectangular neighborhood on its universal cover:
	$$
	R_\varrho(p) := \left\{(x, t) \in B_\varrho(x_0) \times \R \colon |t| < \frac{h_\epsilon(x)}{\epsilon} \varrho \right\}
	$$
	We claim $R_\varrho(p)$ lies inside a circumscribed ball of fixed length. To show this, pick $(x, t) \in R_\rho(p)$ and consider the path that goes parallel along the coordinate axis. Using the gauge fix in Remark \ref{remark:radially-invariant-connection}, the length of this path is bounded above by some uniform constant $C>0$, and so $R_\varrho(p)$ lies inside the ball of radius $C$ centred at $p$.
	The volume of $R_\varrho(p)$ is equal to
	$$
	\Vol_{cf}(R_\varrho(p)) = \int_{x \in B_\varrho(x_0)} 
	\int_{- \frac{h_\epsilon}{\epsilon} \varrho}^{ \frac{h_\epsilon}{\epsilon} \varrho} 
	\frac{\epsilon}{h_\epsilon} \Vol_{g_B} \wedge \d t = 2\Vol_{g_B}(B_\rho(x_0)) = \frac{8}{3} \pi \varrho^3.
	$$
	According to \cite{Cheeger1982}, the injectivity radius at $p$ on $R_\rho(p)$ is bounded below uniformly in $\epsilon \in(0,1)$.
	\\
	
	\noindent
	When $B = \R^2 \times S^1$ the injectivity radius will still decay to zero at infinity. This is due to the term $\frac{1}{e^\rho} g_{S^1}$ in the metric. However, when we consider $P_\infty$ as a $T^2$ bundle and use this unwrapping trick for both decaying fibers at the same time, we still get a lower bound on the injectivity radius.
\end{proof}

\noindent
Finally, we relate periodic functions on these universal local covers to functions on $P_\infty$. When using Sobolev norms, one has to take account of the number of fundamental domains are covered inside a ball of certain radius. This is not necessary for H\"older norms, due to the use of supremum norms. 

	\begin{lemma}
		\label{lem:analysis-asymptotic:function-covering-space}
		Let $V \subseteq P_\infty$ be open such that $V$ restricts to a trivial $S^1$-bundle or $T^2$-bundle respectively over a contractible base space.
		Let $\hat{V}$ be the universal cover of $V$. 
		Then, for any $u \in C^{k, \alpha}_{cf}(V)$,
		$$
		\| u \|_{C^{k, \alpha}_{cf}(V)} =\|\hat{ u} \|_{C^{k, \alpha}_{cf}(\hat{V})},
		$$
		where $\hat{u}$ is the lift of $u$ in $C^{k, \alpha}_{cf}(\hat{V})$.
	\end{lemma}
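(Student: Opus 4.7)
The plan is to exploit the fact that the covering projection $\pi \colon \hat V \to V$, when $\hat V$ is endowed with the pulled-back metric $\pi^* g_{cf}$, is a Riemannian local isometry. First I would note that $\hat u = u\circ\pi$ by definition of the lift, and that local isometries commute with the covariant derivative in the sense that $\nabla^j_{cf}\hat u = \pi^*\bigl(\nabla^j_{cf} u\bigr)$ for every $0 \leq j \leq k$. Since pullback by a local isometry is pointwise norm-preserving on tensors, this gives
\[
|\nabla^j_{cf}\hat u|_{cf}(p) \;=\; |\nabla^j_{cf} u|_{cf}(\pi(p)) \qquad \text{for all } p \in \hat V.
\]
Surjectivity of $\pi$ then yields $\sup_{\hat V}|\nabla^j_{cf}\hat u|_{cf} = \sup_V |\nabla^j_{cf} u|_{cf}$ for each $j \leq k$, which settles the $C^k$ part of the norm.

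For the H\"older seminorm on $\nabla^k$, the key point is that the weighted H\"older norm of Definition \ref{def:asymptotic-geometry:weighted-norm} is, in the bounded-geometry setting provided by Lemma \ref{lem:analysis-asymptotic:christoffel-gtilde} and Lemma \ref{lem:analysis-asymptotic:injectivity-radius}, measured locally on balls of radius smaller than the harmonic radius $r_H$ supplied by Theorem \ref{thm:analysis-asymptotic:hebey}. Because $\pi$ is a local isometry, the restriction of $\pi$ to any ball $B_{r_H}(p) \subset \hat V$ is an isometric diffeomorphism onto $B_{r_H}(\pi(p)) \subset V$. In the harmonic coordinates at $\pi(p)$, the pulled-back harmonic coordinates give harmonic coordinates at $p$, and the components of $\nabla^k_{cf}\hat u$ in these coordinates are literally the components of $\nabla^k_{cf} u$ composed with $\pi$. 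Hence the local H\"older difference quotient at $p$ equals the one at $\pi(p)$, so taking the supremum over $p\in\hat V$ and over $\pi(p)\in V$ produces the same number.

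Combining these two observations gives $\|\hat u\|_{C^{k,\alpha}_{cf}(\hat V)} = \|u\|_{C^{k,\alpha}_{cf}(V)}$. The only subtle step, and the one I would treat carefully, is verifying that the H\"older seminorm as defined in Definition \ref{def:asymptotic-geometry:weighted-norm} truly localizes to balls below the harmonic radius — if the definition instead uses globally defined difference quotients, one needs to note that on $\hat V$ distances only increase ($d_{\hat V}(p,q)\ge d_V(\pi p, \pi q)$), which could \emph{a priori} decrease the seminorm on $\hat V$, but the standard convention in this paper (inherited from Theorem \ref{thm:analysis-asymptotic:hebey}) is the local one, so both seminorms coincide. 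This is the main obstacle; everything else is a direct consequence of $\pi$ being a local isometry and a surjection.
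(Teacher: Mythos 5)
Your argument is correct and is essentially the paper's intended one: the paper offers no written proof of this lemma beyond the preceding remark that H\"older norms are supremum norms and hence insensitive to passing to a cover, which is exactly the local-isometry/surjectivity argument you spell out. Your careful treatment of the H\"older seminorm (localizing to balls below the injectivity radius, where lifted minimizing geodesics show $d_{\hat V}$ realizes $d_V$) correctly handles the only subtlety.
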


		\begin{lemma}
			\label{lem:analysis-asymptotic:compare-metrics-universal-bundle}
			Let $r>0$ be less than the injectivity radius found in Lemma \ref{lem:analysis-asymptotic:injectivity-radius}. Let $p \in P_\infty$, let $B_r(p)$ be the ball of radius $r$ in $P_\infty$ and let $\hat B_r(p)$ be the ball of radius $r$ on the local universal cover of $P_\infty$ at $p$.
			Consider the function
			$$
			v^2 = \begin{cases}
				\frac{e^{\rho} h_\epsilon}{\epsilon} &\text{if } B = \R^3 \\
				\frac{e^{2 \rho} h_\epsilon}{\epsilon} &\text{if } B = \R^2 \times S^1 \\
				\frac{h_\epsilon}{\epsilon} &\text{otherwise}.
			\end{cases}
			$$
			Then, there exist $1 < M_1 < M_2$ and $0  <C_1 < C_2$, independent of $p$ and $\epsilon$, such that for all $u \in L^2(B_r(p))$
			$$
			C_1 \|v \cdot u \|_{L^2_{cf}(B_{r/M_2}(p))} 
			\le \|\hat u \|_{L^2_{cf}(\hat{B}_{r/M_1}(p))} 
			\le C_2 \|v \cdot u \|_{L^2_{cf}(B_r(p))},
			$$
			where $\hat u$ is the periodic lift of $u$ on $\hat{B}_r(x)$.
		\end{lemma}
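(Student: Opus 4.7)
The plan is to exploit that $\pi\colon \hat V \to V$ is a Riemannian covering (hence a local isometry) and rewrite the $L^2$-norm over $\hat V$ as an integral over $V$ weighted by a preimage-counting function. Fixing a lift $\tilde p \in \hat V$ of $p$ and writing $\hat u = u\circ\pi$, a change of variables (summing over orbits of the deck group $\Gamma$ of the covering) gives
\begin{equation*}
\|\hat u\|^2_{L^2_{cf}(\hat B_{r/M_1}(p))} = \int_{V} N(q)\,|u(q)|^2\,\Vol_{cf}(q),
\qquad N(q) := \#\bigl(\pi^{-1}(q) \cap \hat B_{r/M_1}(\tilde p)\bigr).
\end{equation*}
The right-hand inequality of the lemma then reduces to the pointwise upper bound $N(q) \le C\, v(q)^2\,\mathbf{1}_{B_r(p)}(q)$ on $V$, and the left-hand inequality to a matching lower bound $N(q) \ge c\, v(q)^2$ on $B_{r/M_2}(p)$.

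As a preliminary reduction I would show that the weight $v$ is essentially constant across $B_r(p)$: using Lemma \ref{lem:analysis-asymptotic:christoffel-gtilde} together with the explicit form of $\rho$ and $\Omega$ in Definitions \ref{def:asymptotic-geometry:metric-cf} and \ref{def:asymptotic-geometry:weighted-operator}, one checks that $\|d\log v\|_{g_{cf}}$ is uniformly bounded in $\epsilon$, so that $v(q)/v(p) \in [c,c^{-1}]$ for $q \in B_r(p)$, and $v$ may be replaced by the constant $v(p)$ up to uniform multiplicative constants.

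Both bounds on $N(q)$ then reduce to a lattice-point count on $\hat V$. The deck group $\Gamma$ is $\Z$ in the $S^1$-bundle case and $\Z^2$ in the $T^2$-bundle case, and its generators act by translations whose $g_{cf}$-lengths can be read off from $g_{cf} = \Omega^2 g^{GH}_\epsilon$ using Definition \ref{def:asymptotic-geometry:metric-cf}. Computing the volume of a fundamental domain $F$ for the $\Gamma$-action case by case yields $\Vol_{cf}(F) \asymp v(p)^{-2}$ uniformly in $p$ and $\epsilon$. Since $\hat V$ has bounded geometry uniform in $\epsilon$ by Lemma \ref{lem:analysis-asymptotic:injectivity-radius}, $\Vol_{cf}(\hat B_{r/M_1}(\tilde p))$ is comparable to a positive universal constant, so counting translates of any $\tilde q_0 \in \pi^{-1}(q)$ that fall inside $\hat B_{r/M_1}(\tilde p)$ gives the upper bound $N(q) \le C v(p)^2$. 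For the lower bound, I would choose $M_2 > M_1$ so that every preimage of every $q \in B_{r/M_2}(p)$ admits at least $c\, v(p)^2$ translates lying inside $\hat B_{r/M_1}(\tilde p)$; this is an elementary packing argument in $\hat V$, carried out once $M_2$ is large enough that the buffer between the two balls accommodates the diameter of at least one fundamental domain.

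The principal obstacle is the case-by-case identification $\Vol_{cf}(F) \asymp v(p)^{-2}$. The three formulas for $v^2$ reflect different collapse behaviours (one-dimensional fiber collapse for $B = \R^3$ and $B = \R\times T^2$; two-dimensional $T^2$ collapse for $B = \R^2\times S^1$), and in each case one must compute the $g_{cf}$-lengths of the generators of $\Gamma$ from Definition \ref{def:asymptotic-geometry:metric-cf} and verify that the powers of $h_\epsilon$ and $e^{\rho}$ combine exactly to the stated $v^2$; once this calculation is performed, the counting argument above closes the estimate.
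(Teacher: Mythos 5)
Your proposal is correct and follows essentially the same route the paper sketches: counting fundamental domains of the deck action inside and around $\hat B_{r/M_1}(p)$, comparing the count to $v^2$, and obtaining $M_1,M_2$ from inscribed/circumscribed-ball considerations. Your version is in fact more explicit than the paper's (which only outlines the argument), and your case-by-case identification $N(q)\asymp v^2$ checks out against the fiber lengths computed from $g_{cf}=\Omega^2 g^{GH}_\epsilon$; the only cosmetic issue is reusing $\Gamma$ for the deck group, a symbol the paper already reserves for the finite rotation group.
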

		\noindent
		To prove Lemma \ref{lem:analysis-asymptotic:compare-metrics-universal-bundle}, one has to count the number of fundamental domains inside and around $\hat{B}_{r/M_1}(p)$ and compare this to the function $v$. This gives the constants $C_1$ and $C_2$. To do this explicitly it is easier to use the rectangular neighborhoods used in the proof of Lemma \ref{lem:analysis-asymptotic:injectivity-radius}. For this, one needs to estimate its inscribed and circumscribed balls, which gives the constants $M_1$ and $M_2$.
		\\
		
		\noindent
		In summary, the following weighted norms will be the most suited norms for analysis on $(M_\epsilon, g_\epsilon)$:
		\begin{definition}
			\label{def:asymptotic-geometry:weighted-norm}
			Let $\Omega$ and $g_{cf}$ be as described in Definition \ref{def:asymptotic-geometry:metric-cf} and let $\rho$ be as in Definition \ref{def:asymptotic-geometry:weighted-operator}.
			For any $k \in \N$, $\alpha \in (0,1)$, $\delta \in \R$, we define the weighted H\"older norm on $U \subseteq P_\infty$ as
			$$
			\|u\|_{C^{k, \alpha}_{\delta}(U)} = \|e^{-\delta \rho}\cdot u \|_{C^{k,\alpha}_{cf}(U)}.
			$$
			For any $k \in \N$, $\delta \in \R$, we define the weighted $L^2$ and Sobolev norm on $U \subseteq P_\infty$ as
			\begin{align*}
				\langle u, v \rangle_{L^{2}_{\delta}(U)} =& \langle e^{- \delta \rho} \: u, e^{- \delta \rho}\: v \rangle_{\tilde L^2(U)} \\	
				\|u\|^2_{W^{k, 2}_{\delta}(U)} =& \sum_{n=0}^k \| \: |\nabla^n (e^{- \delta \rho} \cdot u)|_{cf}  \|^2_{ \tilde L^2(U)}
			\end{align*}
			where $\tilde L^2(U)$ is the $L^2$ norm with respect to the volume form $\widetilde{\Vol}$ that is induced from Lemma \ref{lem:analysis-asymptotic:compare-metrics-universal-bundle}. When $g_\epsilon = g^{GH}_\epsilon$, 
			$$
			\widetilde{\Vol}|_{P_\infty} =
			\begin{cases}
				\d \rho \wedge \Vol_{S^2} \wedge \eta &\text{if } B = \R^3 \\
				\d \rho \wedge \Vol_{S^1 \times S^1} \wedge \eta &\text{if } B = \R^2 \times S^1\\
				\d \rho \wedge \Vol_{T^2} \wedge \eta &\text{if } B = \R \times T^2.
			\end{cases}
			$$
		\end{definition}
		\subsection{Recipe for weighted elliptic estimates}
		\label{subsec:analysis-asymptotic:local-estimates}
		\noindent
		With all these ingredients we now have a method to establish local elliptic estimates on $P_\infty$. We will work out one example in detail: To rephrase the estimate\footnote{This regularity result is a combination of \cite{Gilbarg2001}, Problem 6.1, Theorem 9.19 and \cite{Folland1995} Theorem 6.33.}
		$$
		\|u\|_{C^{k, \alpha}_{\R^n}} \le C \left[
		\|\Delta u\|_{C^{k-2, \alpha}_{\R^n}}
		+ \|u\|_{C^{0}}
		\right],
		$$ 
		we follow the following steps:
		\\
		
		\noindent
		\textit{Step 1:} First we define the domains on which we apply the estimates. According to Theorem \ref{thm:analysis-asymptotic:hebey} there exists $r_H > 0$ independent of $x \in P_\infty$ and $ \epsilon > 0$, such that the ball $\hat{B}_{r_H}(x)$ inside the local universal cover can be equipped with harmonic coordinates.  We fix $0 < r < r' < r_H$, $ k \in \N_{\ge 2}$, $ \alpha \in (0,1)$ and $\delta \in \R$. For our local elliptic estimates we only consider function $u \in C^{k,\alpha}_\delta (B_{r'}(x))$.		
		\\
			
		\noindent
		\textit{Step 2:} We lift $u \in C^{k,\alpha}_\delta (B_{r'}(x))$ to a periodic function  $\hat u$ on the local universal cover inside $\hat B_{r'}(x)$. Combining Definition \ref{def:asymptotic-geometry:weighted-norm} with Lemma \ref{lem:analysis-asymptotic:function-covering-space}, we relate 
		 $		\|u\|_{C^{k, \alpha}_{\delta}(B_r'(x))} = \|e^{- \delta \rho }\hat{u}\|_{C^{k, \alpha}_{cf}(\hat{B}_r'(x))}$.
		Because the local universal cover has bounded geometry, the $C^{k, \alpha}_{cf}(\hat{B}_r'(x))$ norm is equivalent to the standard $\R^n$ norm induced by the harmonic coordinates. Using these coordinates, we get the elliptic estimate
		\begin{align*}
			\|e^{- \delta \rho }\hat u\|_{C^{k, \alpha}_{cf}(\hat{B}_r(x))} 
			\le& C \left[
			\|L_\delta(e^{- \delta \rho }\hat u)\|_{C^{k-2, \alpha}_{cf}(\hat{B}_r'(x))}
			+ \|e^{- \delta \rho }\hat u\|_{C^{0}(\hat{B}_r'(x))}
			\right].
		\end{align*}
		\noindent
		\textit{Step 3:} Using the fact that $L_\delta$ is invariant under deck transformations, we project down to balls on $P_\infty$, and by Lemma \ref{lem:analysis-asymptotic:function-covering-space}:
		\begin{align*}
			\|e^{- \delta \rho } u\|_{C^{k, \alpha}_{cf}({B}_r(x))} 
			\le& C \left[
			\|L_\delta(e^{- \delta \rho } u)\|_{C^{k-2, \alpha}_{cf}({B}_r'(x))}
			+ \|e^{- \delta \rho } u\|_{C^{0}({B}_r'(x))}
			\right].
		\end{align*}
		Using the definition of $L_\delta$,
		\begin{align*}
			\|e^{- \delta \rho } u\|_{C^{k, \alpha}_{cf}({B}_r(x))} 
			\le& C \left[
			\|e^{- \delta \rho } \Omega^{-2}\Delta_{g_\epsilon} u\|_{C^{k-2, \alpha}_{cf}({B}_r'(x))}
			+ \|e^{- \delta \rho }\hat u\|_{C^{0}({B}_r'(x))}
			\right],
		\end{align*}
		and we conclude:
		\begin{proposition}
			\label{prop:analysis-asymptotic:schauder-local-holder}
			Let $k \in \N_{\ge 2}$, $\delta \in \R$ and $\alpha \in (0,1)$.
			For sufficiently small $0 < r < r'$, there exists an uniform constant $C > 0$ such that for all $x \in P_\infty$ and any distribution
			$u$ on $B_{r'}(x)$
			with $\Omega^{-2}\Delta_{g_\epsilon} u \in C^{k-2, \alpha}_{\delta}\left(B_{r'}(x)\right)$,
			$$u \in {C}^{k,\alpha}_{\delta}(B_{r'}(x))$$
			and
			$$
			\| u\|_{C^{k,\alpha}_{\delta}\left(B_{r}(x)\right)} \le C \left[
			\|\Omega^{-2} \Delta_{g_\epsilon} u\|_{C^{k-2,\alpha}_{\delta}\left(B_{r'}(x)\right)}
			+ \| u\|_{C^{0}_{\delta}\left(B_{r'}(x)\right)}
			\right].
			$$
		\end{proposition}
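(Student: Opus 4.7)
The plan is to reduce the statement to the standard interior Schauder estimate on a Euclidean ball via three moves: pass to the conformally rescaled metric $g_{cf}$, lift to the local universal cover where the geometry is uniformly bounded, apply the flat estimate in harmonic coordinates, and then project back down to $P_\infty$. All the uniformity-in-$\epsilon$ claims needed for this reduction have already been packaged into the preceding lemmas, so the work is essentially bookkeeping.

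First, by Lemma \ref{lem:analysis-asymptotic:christoffel-gtilde} and Lemma \ref{lem:analysis-asymptotic:injectivity-radius}, the local universal covers of $(P_\infty, g_{cf})$ have bounded geometry uniformly in $\epsilon \in (0,1)$. Theorem \ref{thm:analysis-asymptotic:hebey} then produces a harmonic radius $r_H > 0$, independent of both $x$ and $\epsilon$, on which $g_{cf}$ is $C^{k,\alpha}$-equivalent to the flat Euclidean metric with any prescribed constant $Q > 1$. I would fix once and for all $0 < r < r' < r_H$.

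Second, given $u$ with $\Omega^{-2}\Delta_{g_\epsilon} u \in C^{k-2,\alpha}_\delta(B_{r'}(x))$, I would set $\tilde u := e^{-\delta \rho} u$ and let $\tilde{\hat u}$ denote its periodic lift to $\hat B_{r'}(x)$. By Proposition \ref{prop:analysis-asymptotic:laplace-equation-expansion}, the weighted operator $L_\delta$ is strictly elliptic with $C^{0,\alpha}_{cf}$-bounded coefficients, uniformly in $\epsilon$, and the same holds for its lift to the universal cover. Writing everything in the harmonic coordinates and applying the classical interior Schauder estimate of \cite{Gilbarg2001} (Theorem 6.2 and Problem 6.1, together with Theorem 9.19 to promote a distributional solution to a classical one) gives
\begin{equation*}
\|\tilde{\hat u}\|_{C^{k,\alpha}_{cf}(\hat B_r(x))} \le C\bigl[\|L_\delta \tilde{\hat u}\|_{C^{k-2,\alpha}_{cf}(\hat B_{r'}(x))} + \|\tilde{\hat u}\|_{C^0(\hat B_{r'}(x))}\bigr],
\end{equation*}
with $C$ independent of $x$ and $\epsilon$ thanks to the uniform ellipticity constants and the uniform $C^{k,\alpha}$ comparison with the Euclidean metric.

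Finally, because $L_\delta$ commutes with deck transformations, $L_\delta \tilde{\hat u}$ is the periodic lift of $L_\delta \tilde u$, and Lemma \ref{lem:analysis-asymptotic:function-covering-space} identifies the $C^{k,\alpha}_{cf}$ norm on a ball in $P_\infty$ with the corresponding norm on its universal cover. Hence the estimate descends verbatim to $B_r(x) \subset P_\infty$; substituting $L_\delta \tilde u = e^{-\delta \rho} \Omega^{-2}\Delta_{g_\epsilon} u$ and unfolding Definition \ref{def:asymptotic-geometry:weighted-norm} yields the asserted bound. The only non-routine point is the uniformity of $C$ in the collapsing parameter $\epsilon$---in particular coping with the shrinking $S^1$ or $T^2$ fibers, which is precisely the reason one must work on the local universal cover rather than directly on $P_\infty$---and this has been absorbed entirely into the uniform harmonic radius and uniform ellipticity bounds invoked above.
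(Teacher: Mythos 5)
Your proposal is correct and follows essentially the same route as the paper's own argument (Steps 1--3 of Section \ref{subsec:analysis-asymptotic:local-estimates}): fix $r < r' < r_H$ via the uniform harmonic radius on the local universal covers, apply the classical interior Schauder estimate to the uniformly elliptic weighted operator $L_\delta$ in harmonic coordinates, and descend using deck-transformation invariance and Lemma \ref{lem:analysis-asymptotic:function-covering-space}. No gaps.
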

		\noindent
		Similarly, we get a local Schauder estimate using Sobolev norms. For this we use the results on $\R^n$ from \cite{Evans1998} (Theorem 1 in section 6.3.1) and \cite{Bandle1998} (Theorem 7-12).
		
		\begin{proposition}
			\label{prop:analysis-asymptotic:schauder-local-Sobolev}
			Let $k \in \N_{\ge 2}$ and $\delta \in \R$.
			For sufficiently small $0 < r < r'$, there exists an uniform constant $C > 0$ such that for all $x \in P_\infty$ and any distribution
			$u$ on $B_{r'}(x)$
			with $\Omega^{-2}\Delta_{g_\epsilon} u \in C^{k-2, \alpha}_{\delta}\left(B_{r'}(x)\right)$,
			$$u \in {W}^{k,2}_{\delta}(B_{r'}(x))$$
			and
			$$
			\| u\|^2_{W^{k,2}_{\delta}\left(B_{r}(x)\right)} \le C \left[
			\|\Omega^{-2} \Delta_{g_\epsilon} u\|^2_{W^{k-2,2}_{\delta}\left(B_{r'}(x)\right)}
			+ \| u\|^2_{L^{2}_{\delta}\left(B_{r'}(x)\right)}
			\right].
			$$
		\end{proposition}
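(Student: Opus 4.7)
The plan is to mimic almost verbatim the three-step recipe carried out in the proof of Proposition \ref{prop:analysis-asymptotic:schauder-local-holder}, replacing every use of a Hölder interior estimate on $\R^n$ by the corresponding Sobolev interior estimate (for instance \cite{Evans1998}, Theorem~1 in §6.3.1, or \cite{Bandle1998}, Theorems~7--12). The three ingredients that make this drop-in substitution go through are already in place: uniform harmonic radius on the local universal cover (Theorem \ref{thm:analysis-asymptotic:hebey} combined with Lemma \ref{lem:analysis-asymptotic:christoffel-gtilde} and Lemma \ref{lem:analysis-asymptotic:injectivity-radius}), uniform strict ellipticity of $L_\delta$ with uniformly bounded coefficients (Proposition \ref{prop:analysis-asymptotic:laplace-equation-expansion}), and a comparison between norms on $P_\infty$ and norms on the local universal cover (Lemma \ref{lem:analysis-asymptotic:compare-metrics-universal-bundle}).

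Concretely, I would fix $0 < r < r' < r_H$ with $r_H$ the uniform harmonic radius, and for each $x \in P_\infty$ work on the ball $\hat B_{r'}(x)$ inside the local universal cover, lifting $u$ to a periodic $\hat u$. On this ball the harmonic coordinates from Theorem \ref{thm:analysis-asymptotic:hebey} give a $C^{k,\alpha}$-equivalence (with constants independent of $x$ and $\epsilon$) between the metric $g_{cf}$ and the flat metric on a fixed Euclidean ball, so the standard Sobolev estimate on $\R^n$ applied to the strictly elliptic operator $L_\delta$ yields
\begin{align*}
\|e^{-\delta\rho}\,\hat u\|_{W^{k,2}_{cf}(\hat B_r(x))}^2
\le C\left[
\|L_\delta(e^{-\delta\rho}\,\hat u)\|_{W^{k-2,2}_{cf}(\hat B_{r'}(x))}^2
+ \|e^{-\delta\rho}\,\hat u\|_{L^{2}_{cf}(\hat B_{r'}(x))}^2
\right],
\end{align*}
with $C$ uniform in $x$ and $\epsilon$ by Proposition \ref{prop:analysis-asymptotic:laplace-equation-expansion}.

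Next, since $L_\delta$, $\rho$ and $\Omega$ are invariant under the deck transformations of the local universal cover, every norm appearing above is the lift of a norm on $B_r(x)$, $B_{r'}(x) \subset P_\infty$. The weighted $L^2$ and Sobolev norms in Definition \ref{def:asymptotic-geometry:weighted-norm} are defined precisely through the volume form $\widetilde{\Vol}$ coming from Lemma \ref{lem:analysis-asymptotic:compare-metrics-universal-bundle}, which is the natural lift of $\Vol_{cf}$; consequently the lift identity for periodic functions converts the estimate above directly into
\begin{align*}
\| u\|^{2}_{W^{k,2}_{\delta}(B_{r}(x))}
\le C\left[
\|\Omega^{-2}\Delta_{g_\epsilon} u\|^{2}_{W^{k-2,2}_{\delta}(B_{r'}(x))}
+ \| u\|^{2}_{L^{2}_{\delta}(B_{r'}(x))}
\right],
\end{align*}
upon rewriting $L_\delta(e^{-\delta\rho}\,\cdot) = e^{-\delta\rho}\,\Omega^{-2}\Delta_{g_\epsilon}(\cdot)$.

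The only genuine point that requires care, and the step I would expect to be the main obstacle, is checking that the Sobolev estimates on $\R^n$ can be applied with constants depending only on the $C^{k-1,\alpha}$-norm of the coefficients of $L_\delta$ in harmonic coordinates, so that uniformity in $x$ and $\epsilon$ is preserved. This is already implicit in the Hölder case (which needs a comparable regularity of the coefficients to bootstrap up to $C^{k,\alpha}$), and it is available in the references cited; no new geometric input is required beyond what was used for Proposition \ref{prop:analysis-asymptotic:schauder-local-holder}.
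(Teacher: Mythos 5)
Your proposal matches the paper's approach exactly: the paper gives no separate proof for this proposition, stating only that it follows ``similarly'' to Proposition \ref{prop:analysis-asymptotic:schauder-local-holder} via the very $\R^n$ Sobolev estimates you cite, so your drop-in substitution of the interior $W^{k,2}$ estimate into the three-step recipe is precisely what is intended. The one place to be slightly more careful is the descent from the cover to $P_\infty$: unlike the H\"older case (Lemma \ref{lem:analysis-asymptotic:function-covering-space}, an exact equality of norms), the $L^2$ comparison of Lemma \ref{lem:analysis-asymptotic:compare-metrics-universal-bundle} is a two-sided inequality involving the weight $v$ and a change of radii $r/M_2 < r/M_1 < r$, so the estimate descends only after readjusting $r$ and $r'$ --- which your hypothesis of ``sufficiently small $0 < r < r'$'' already accommodates.
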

		\begin{proposition}
			\label{prop:analysis-asymptotic:schauder-local-nash-moser}
			Let $\delta \in \R$ and $\alpha \in (0,1)$.
			For sufficiently small $0 < r < r'$, there exists an uniform constant $C > 0$ such that for all $x \in P_\infty$ and any
			$u \in C^{2, \alpha}_{\delta}(B_{r'}(x)),$
			$$
			\| u\|_{C^{0,\alpha}_{\delta}\left(B_{r}(x)\right)} \le C \left[
			\|\Omega^{-2} \Delta_{g_\epsilon} u\|_{C^{0,\alpha}_{\delta}\left(B_{r'}(x)\right)}
			+ \| u\|_{L^2_{\delta}\left(B_{r'}(x)\right)}
			\right].
			$$
		\end{proposition}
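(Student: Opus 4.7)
The plan is to apply the same three-step recipe used for Propositions \ref{prop:analysis-asymptotic:schauder-local-holder} and \ref{prop:analysis-asymptotic:schauder-local-Sobolev}, replacing only the local Euclidean input. First, fix $0 < r < r' < r_H$ with $r_H$ the harmonic radius from Theorem \ref{thm:analysis-asymptotic:hebey}, lift $u$ to its periodic extension $\hat u$ on the local universal cover $\hat B_{r'}(x)$, and use that bounded geometry makes $C^{0,\alpha}_{cf}$ equivalent to the standard Euclidean $C^{0,\alpha}$ norm in harmonic coordinates, with constants uniform in $\epsilon$.

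The required Euclidean input is: for any strictly elliptic operator $L$ on a ball in $\R^4$ whose $C^{0,\alpha}$ coefficients are bounded by the constants $\lambda, \Lambda$ of Proposition \ref{prop:analysis-asymptotic:laplace-equation-expansion},
$$
\|v\|_{C^{0,\alpha}(B_r)} \le C \left[ \|Lv\|_{C^{0,\alpha}(B_{r'})} + \|v\|_{L^2(B_{r'})} \right].
$$
I would obtain this in two substeps. Moser iteration (e.g.\ Theorem 8.17 of \cite{Gilbarg2001}) gives, at an intermediate radius $r < r'' < r'$,
$$
\|v\|_{L^\infty(B_{r''})} \le C \left( \|Lv\|_{L^\infty(B_{r'})} + \|v\|_{L^2(B_{r'})} \right),
$$
and the interior Schauder estimate (Theorem 6.2 of \cite{Gilbarg2001}) upgrades this to
$$
\|v\|_{C^{2,\alpha}(B_r)} \le C \left( \|Lv\|_{C^{0,\alpha}(B_{r''})} + \|v\|_{L^\infty(B_{r''})} \right);
$$
the claim then follows from $C^{2,\alpha}(B_r) \hookrightarrow C^{0,\alpha}(B_r)$. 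All constants depend only on $\lambda, \Lambda$ and the radii, hence are uniform in $\epsilon$. Applying this with $v = e^{-\delta\rho}\hat u$ and $L = L_\delta$ produces the estimate on the universal cover.

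Finally, I project back to $P_\infty$: Lemma \ref{lem:analysis-asymptotic:function-covering-space} preserves the $C^{0,\alpha}_{cf}$ norms under the covering map, while Lemma \ref{lem:analysis-asymptotic:compare-metrics-universal-bundle} converts the $L^2_{cf}$ norm of $\hat u$ on $\hat B_{r'}(x)$ into the weighted $L^2_\delta$ norm on a ball in $P_\infty$, since the scaling factor $v$ appearing there is precisely what is absorbed in the volume form $\widetilde{\Vol}$ of Definition \ref{def:asymptotic-geometry:weighted-norm}. The one bookkeeping subtlety is that Lemma \ref{lem:analysis-asymptotic:compare-metrics-universal-bundle} forces us to shrink radii by the factors $M_1, M_2$ when passing between the cover and $P_\infty$, so $r, r'$ must be chosen slightly smaller than in the earlier propositions; the real analytic content is the Moser iteration step, whose $\epsilon$-uniformity follows cleanly from Proposition \ref{prop:analysis-asymptotic:laplace-equation-expansion}.
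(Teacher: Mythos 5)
Your proposal is correct and follows exactly the recipe the paper intends for this proposition: lift to the local universal cover, work in the harmonic coordinates of Theorem \ref{thm:analysis-asymptotic:hebey}, feed in a standard Euclidean estimate for the uniformly elliptic operator $L_\delta$ of Proposition \ref{prop:analysis-asymptotic:laplace-equation-expansion}, and project back via Lemmas \ref{lem:analysis-asymptotic:function-covering-space} and \ref{lem:analysis-asymptotic:compare-metrics-universal-bundle}. The only cosmetic remark is that since $L_\delta$ is written in non-divergence form, the local $L^\infty$-from-$L^2$ bound is most directly quoted from the strong-solution version (e.g.\ Theorem 9.20 in \cite{Gilbarg2001}) rather than the divergence-form Theorem 8.17, though with smooth coefficients either route works.
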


		\noindent
		We are now ready to prove Theorem \ref{thm:analysis-asymptotic:schauder-global-holder}. For this theorem we need a neighborhood $P'_\infty$ of $P_\infty$. Recall that topologically $P_\infty$ is a circle bundle over $ [R_0, \infty) \times \Sigma$, where $R_0 > 0$ and $\Sigma$ is a compact space. A suitable choice of $P'$ can be made by picking $R_1$ slightly smaller than $R_0$ and define $P'_\infty$ as the circle bundle over $ [R_1, \infty) \times \Sigma$. 
		\begin{proof}[Proof of Theorem \ref{thm:analysis-asymptotic:schauder-global-holder}]
			Let $r$ and $r'$ be as described in the steps for Proposition \ref{prop:analysis-asymptotic:schauder-local-holder}. Because $u \in C^{k, \alpha}_{loc}(P'_\infty)$, $u$ must lie in $C^{k, \alpha}_{\delta}(B_{r'}(x))$ for all $x \in P_\infty$. Proposition \ref{prop:analysis-asymptotic:schauder-local-holder} states that
			\begin{align*}
				\| u\|_{C^{k,\alpha}_{\delta}\left(B_{r}(x)\right)} 
				\le& C \left[
				\| \Omega^{-2} \Delta_{g_\epsilon} u\|_{C^{k-2,\alpha}_{\delta}\left(B_{r'}(x)\right)}
				+ \| u\|_{C^{0}_{\delta}\left(B_{r'}(x)\right)}
				\right] \\
				\le& C \left[
				\|\Omega^{-2} \Delta_{g_\epsilon} u\|_{C^{k-2,\alpha}_{\delta}\left(P'_\infty\right)}
				+ \| u\|_{C^{0}_{\delta}\left(P'_\infty\right)}
				\right].
			\end{align*}
			Varying $x \in P_\infty$, we conclude
			\begin{align*}
				\| u\|_{C^{k,\alpha}_{\delta}\left(P_\infty\right)}
				= 
				\sup_{x \in P_\infty}
				\| u\|_{C^{k,\alpha}_{\delta }\left(B_{r}(x)\right)} 
				\le C \left[
				\|\Omega^{-2} \Delta_{g_\epsilon} u\|_{C^{k-2,\alpha}_{\delta}\left(P'_\infty\right)}
				+ \| u\|_{C^{0}_{\delta}\left(P'_\infty\right)}
				\right].
			\end{align*}
			For the boundary regularity estimate we use the same method, combined with Corollary 6.7 from \cite{Gilbarg2001}, which states that, for any $x$ close to the boundary,
			\begin{align*}
				\| u\|_{C^{k,\alpha}_{\delta}\left(B_{r}(x) \cap P'_\infty\right)} 
				\le& C \left[
				\|\Omega^{-2} \Delta_{g_\epsilon} u\|_{C^{k-2,\alpha}_{\delta}\left(P'_\infty\right)}
				+ \| u\|_{C^{0}_{\delta}\left(P'_\infty\right)}
				\right].
			\end{align*}
		\end{proof}
		\noindent
		The same method can be used to extend Proposition \ref{prop:analysis-asymptotic:schauder-local-nash-moser} to a global version. For Proposition \ref{prop:analysis-asymptotic:schauder-local-Sobolev}, we need to use a summation method, similarly to Proposition 6.1.1 in \cite{Pacard2008}. Namely, we pick $\kappa > 0$ and write $P'_\infty$ as the union of annuli $A_n := \pi^{-1}(B_{R_0+\kappa(n+1)} \setminus B_{R_0 + \kappa n})$, and we sum the estimates for all annuli. Because the radius of the circle fiber is uniformly bounded above we can cover $A_n$ with a fixed number of balls such that on each ball we can apply Proposition \ref{prop:analysis-asymptotic:schauder-local-Sobolev}. For large enough $\kappa$, we get the estimate
		\begin{align*}
			\| u\|^2_{W^{k,2}_{\delta}\left(A_n\right)} 
			%	\le& C \left[
			%	\|\Omega^{-2} \Delta_{g_\epsilon} u\|^2_{W^{k-2,2}_{\delta}\left(A_{n-1} \cup A_n \cup A_{n+1}\right)}
			%	+ \| u\|^2_{L^{2}_{\delta}\left(A_{n-1} \cup A_n \cup A_{n+1}\right)}
			%	\right]
			%	\\
			\le& C \sum_{m = n-1}^{n+1}\left[
			\|\Omega^{-2} \Delta_{g_\epsilon} u\|^2_{W^{k-2,2}_{\delta}\left(A_{m}\right)}
			+ \| u\|^2_{L^{2}_{\delta}\left(A_{m}\right)}\right]
		\end{align*}
		for all $n \in \Z_{\ge 0}$.
		Taking the union over the first $N$ annuli yields
		\begin{align*}
			\| u\|_{W^{k,2}_{\delta}\left(\bigcup_{n=1}^N A_n\right)} 
			%	\le& C \sum_{n = 1}^n \sum_{m = n-1}^{n+1}\left[
			%	\|\Omega^{-2} \Delta_{g_\epsilon} u\|_{W^{k-2,2}_{\delta}\left(A_{m}\right)}
			%	+ \| u\|_{L^{2}_{\delta}\left(A_{m}\right)}\right] \\
			\le& 3C \sum_{n = 0}^{N+1} \left[
			\|\Omega^{-2} \Delta_{g_\epsilon} u\|_{W^{k-2,2}_{\delta}\left(A_{n}\right)}
			+ \| u\|_{L^{2}_{\delta}\left(A_{n}\right)}\right] \\
			\le& 3C \left[
			\|\Omega^{-2} \Delta_{g_\epsilon} u\|_{W^{k-2,2}_{\delta}\left(\bigcup_{n=0}^{N+1}A_{n}\right)}
			+ \| u\|_{L^{2}_{\delta}\left(\bigcup_{n=0}^{N+1}A_{n}\right)}\right].
		\end{align*}
		If one assumes that $u$ vanishes on the boundary of $P'_\infty$,
		\begin{align*}
			\| u\|_{W^{k,2}_{\delta}\left(\bigcup_{n=0}^N A_n\right)} 
			\le& 3C \left[
			\|\Omega^{-2} \Delta_{g_\epsilon} u\|_{W^{k-2,2}_{\delta}\left(\bigcup_{n=0}^{N+1}A_{n}\right)}
			+ \| u\|_{L^{2}_{\delta}\left(\bigcup_{n=0}^{N+1}A_{n}\right)}\right]. 
		\end{align*}
		Taking the limit $N \to \infty$ we conclude:
		\begin{theorem}
			\label{thm:analysis-asymptotic:schauder-global-Sobolev}
			Let $k \in \N_{\ge 2}$ and $\delta \in \R$.
			There exists a uniform constant $C > 0$ such that for any $L^2_\delta$-bounded $u \in W^{k,2}_{loc}(P'_\infty)$, $ \Omega^{-2}\Delta_{g_\epsilon} u \in W^{k-2,2}_{\delta}(P'_\infty)$ imply
			$u \in W^{k,2}_{\delta}(P_\infty)$
			and 
			$$
			\| u\|_{W^{k,2}_{\delta}(P_\infty)} \le C \left[
			\| \Omega^{-2} \Delta_{g_\epsilon} u\|_{W^{k-2,2}_{\delta}(P'_\infty)}
			+ \| u\|_{L^{2}_{\delta}(P'_\infty)}
			\right].
			$$
			Furthermore, if $u$ vanishes on $\del P'_\infty$, then
			$$
			\| u\|_{W^{k,2}_{\delta}(P'_\infty)} \le C \left[
			\| \Omega^{-2} \Delta_{g_\epsilon} u\|_{W^{k-2,2}_{\delta}(P'_\infty)}
			+ \| u\|_{L^{2}_{\delta}(P'_\infty)}
			\right].
			$$
		\end{theorem}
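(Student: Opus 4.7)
The plan is to promote the local Sobolev estimate of Proposition \ref{prop:analysis-asymptotic:schauder-local-Sobolev} to a global one via a summation argument over an annular decomposition of $P'_\infty$. I fix $0 < r < r' < r_H$ given by Proposition \ref{prop:analysis-asymptotic:schauder-local-Sobolev} and choose $\kappa > 0$ large enough that any ball $B_{r'}(x)$ centered at a point of the annulus $A_n := \pi^{-1}(B_{R_0 + \kappa(n+1)} \setminus B_{R_0 + \kappa n})$ is contained in $A_{n-1} \cup A_n \cup A_{n+1}$. Because the circle fibers at infinity have radius of order $\epsilon < 1$ and in particular are uniformly bounded above, each $A_n$ can be covered by a number of $g_\epsilon$-balls of radius $r$ that is uniform in both $n$ and $\epsilon$.

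Applying Proposition \ref{prop:analysis-asymptotic:schauder-local-Sobolev} on each ball in the cover and summing yields the annular estimate
$$\|u\|^2_{W^{k,2}_\delta(A_n)} \le C \sum_{m=n-1}^{n+1} \left[\|\Omega^{-2}\Delta_{g_\epsilon} u\|^2_{W^{k-2,2}_\delta(A_m)} + \|u\|^2_{L^2_\delta(A_m)}\right]$$
with a constant $C$ that is uniform in $n$ and $\epsilon$. Summing over $n = 0, \ldots, N$, and noting that each $A_m$ appears at most three times on the right, I obtain
$$\|u\|^2_{W^{k,2}_\delta(\bigcup_{n=0}^N A_n)} \le 3C \left[\|\Omega^{-2}\Delta_{g_\epsilon} u\|^2_{W^{k-2,2}_\delta(P'_\infty)} + \|u\|^2_{L^2_\delta(P'_\infty)}\right].$$
The right-hand side is finite by hypothesis and independent of $N$, so letting $N \to \infty$ and invoking monotone convergence gives both the membership $u \in W^{k,2}_\delta(P_\infty)$ and the global estimate.

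For the Dirichlet case, when $u$ vanishes on $\partial P'_\infty$, I would replace Proposition \ref{prop:analysis-asymptotic:schauder-local-Sobolev} on balls straddling the boundary with its boundary analog (the Sobolev counterpart of Corollary 6.7 in \cite{Gilbarg2001}), so that the summation extends cleanly and $P_\infty$ on the left is replaced by all of $P'_\infty$. The main subtlety I expect is bookkeeping: one must verify simultaneously that the harmonic radius, the constant in each local estimate, and the cardinality of the annular ball cover are uniform in both the annulus index $n$ and the scaling parameter $\epsilon$. This is exactly the content of the bounded-geometry setup of Section \ref{subset:analysis-asymptotic:bounded-geometry}, which is why the uniform local estimate of Proposition \ref{prop:analysis-asymptotic:schauder-local-Sobolev} was formulated in that form; once these uniformities are in hand, the remaining work is elementary summation and monotone convergence.
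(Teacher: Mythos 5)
Your proposal is correct and follows essentially the same route as the paper: the paper likewise decomposes $P'_\infty$ into the annuli $A_n = \pi^{-1}(B_{R_0+\kappa(n+1)}\setminus B_{R_0+\kappa n})$, covers each annulus by a number of balls that is uniform in $n$ and $\epsilon$ (using the uniformly bounded fiber radius), sums the local estimates of Proposition \ref{prop:analysis-asymptotic:schauder-local-Sobolev} with the three-fold overlap factor, and passes to the limit $N\to\infty$, treating the Dirichlet case with the boundary version of the local estimate. The only cosmetic slip is that the covering balls should be taken with respect to $g_{cf}$ (the metric in which the harmonic radius and the local estimates are formulated), not $g_\epsilon$.
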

				\begin{theorem}
			\label{thm:analysis-asymptotic:schauder-global-nash-moser}
			Let $k \in \N_{\ge 2}$, $\alpha\in (0,1)$ and $\delta \in \R$.
			There exists a uniform constant $C > 0$ such that for any $L^2_\delta$-bounded $u \in L^{2}(P'_\infty)$, $ \Omega^{-2}\Delta_{g_\epsilon} u \in C^{0,\alpha}_{\delta}(P'_\infty)$ imply
			$
			u \in C^{0, \alpha}_{\delta}(P_\infty)
			$
			and
			$$
			\| u\|_{C^{0,\alpha}_{\delta}(P_\infty)} \le C \left[
			\| \Omega^{-2} \Delta_{g_\epsilon} u\|_{C^{0,\alpha}_{\delta}(P'_\infty)}
			+ \| u\|_{L^{2}_{\delta}(P'_\infty)}
			\right].
			$$
		\end{theorem}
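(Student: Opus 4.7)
The plan is to follow the proof of Theorem \ref{thm:analysis-asymptotic:schauder-global-holder} almost verbatim, but with Proposition \ref{prop:analysis-asymptotic:schauder-local-nash-moser} as the local input. The crucial observation is that the $L^2_\delta$ norm on the right-hand side of the local estimate is trivially controlled by the global $L^2_\delta$ norm on $P'_\infty$, so no summation over annuli is required (in contrast to the purely Sobolev global estimate in Theorem \ref{thm:analysis-asymptotic:schauder-global-Sobolev}).

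Concretely, I would fix $0 < r < r'$ so that, uniformly in $x \in P_\infty$ and $\epsilon \in (0,1)$, the ball $B_{r'}(x)$ lies inside $P'_\infty$ and Proposition \ref{prop:analysis-asymptotic:schauder-local-nash-moser} yields
\[
\| u\|_{C^{0,\alpha}_{\delta}(B_{r}(x))} \le C \bigl[
\|\Omega^{-2} \Delta_{g_\epsilon} u\|_{C^{0,\alpha}_{\delta}(B_{r'}(x))}
+ \| u\|_{L^2_{\delta}(B_{r'}(x))}\bigr]
\]
with $C$ independent of $x$ and $\epsilon$. The first term on the right is bounded by $\|\Omega^{-2} \Delta_{g_\epsilon} u\|_{C^{0,\alpha}_{\delta}(P'_\infty)}$ by monotonicity of the Hölder norm under restriction, and the second is bounded by $\|u\|_{L^2_{\delta}(P'_\infty)}$ because the weighted $L^2$ norm is simply an integral. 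Taking the supremum over $x \in P_\infty$ immediately controls $\|u\|_{C^0_\delta(P_\infty)}$ together with the ball-local Hölder seminorms by the desired right-hand side.

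The only minor loose end is the Hölder seminorm for pairs of points that do not lie in a common ball $B_r(x)$. For such a pair $(p,q)$ the $g_{cf}$-distance is at least $r$, so the weighted difference quotient $|e^{-\delta \rho(p)}u(p) - e^{-\delta \rho(q)}u(q)|/\operatorname{dist}_{cf}(p,q)^\alpha$ is majorised by $2 r^{-\alpha} \|u\|_{C^0_\delta(P_\infty)}$, which has already been controlled in the previous step. Assembling the two regimes produces the claimed global estimate.

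I do not foresee a serious obstacle here: all the uniform analytic content has already been absorbed into Section \ref{subsec:analysis-asymptotic:local-estimates}, and this theorem is essentially a packaging exercise. The only point that required a moment of care is verifying that the $L^2_\delta$-localisation is indeed compatible with taking a supremum on the left, which works precisely because the local ball size $r'$ is independent of $x$ and $\epsilon$ and because the weight $e^{-\delta \rho}$ varies by bounded factors on each ball.
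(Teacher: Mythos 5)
Your proposal matches the paper's own argument: the paper simply notes that the method used for Theorem \ref{thm:analysis-asymptotic:schauder-global-holder} (take the supremum of the uniform local estimate of Proposition \ref{prop:analysis-asymptotic:schauder-local-nash-moser} over balls $B_r(x)$, $x \in P_\infty$, bounding both right-hand terms by their global counterparts) extends Proposition \ref{prop:analysis-asymptotic:schauder-local-nash-moser} to this global version, with no summation over annuli needed. Your additional remark on the H\"older seminorm for far-apart pairs is a correct, if routine, completion of the same argument.
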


	\section{Fredholm theory for the Laplacian}
	\label{sec:Fredholm-asymptotic}
	From now on we will focus on the Laplacian. Given these regularity estimates, the next step is to show $\Omega^{-2} \Delta_{g_\epsilon}$ is Fredholm. A standard argument (e.g. See \cite{Pacard2008}) shows that it is sufficient to prove Theorem \ref{thm:analysis-asymptotic:special-perp-schauder-polynomial-case}, because the term $P'_\infty \setminus P_\infty$ in this theorem can be chosen compact. The prove of this theorem will heavily base on the observation that $h_\epsilon \Delta_{g^{GH}_\epsilon} = \Delta^{B}$ for functions that are pulled back from $B$. On the base spaces, the Fredholm estimates are well known, and hence we only need to study functions on the fiber. We use the following Fourier decomposition:
		\begin{definition}
			\label{def:analysis-asymptotic:S1-invarariant-functions}
			For any continuous function $u$ on $P'_\infty$ define the $S^1$ invariant part of $u$ as
			$$
			u_{b}(x,t) = \frac{1}{2 \pi }\int_{\pi^{-1}(x)} u \: \eta
			$$
			and the $S^1$ non-invariant part
			$u_{f} = u - u_{b}.$ The operators that map $u$ to $u_b$ and $u_f$ will be denoted as $\pi_b$ and $\pi_f$ respectively. 
			%	The space $C^{k, \alpha}_{\delta, b}$ and $C^{k, \alpha}_{\delta, f}$ are the subspaces of $C^{k, \alpha}_{\delta}$ that only contain $S^1$ invariant resp. $S^1$ non-invariant functions.
		\end{definition}
		\noindent
		By construction, the space of continuous functions on $P'_\infty$ has a direct sum decomposition into $S^1$ invariant and $S^1$ non-invariant functions. 
		Related to this splitting there are three analytical properties which turn out to be useful.
		\begin{lemma}
			\label{lem:analysis-asymptotic:laplacian-compatible-splitting}
			The operators $\Delta_{g^{GH}_\epsilon}$ and $\pi_b$ commute.
		\end{lemma}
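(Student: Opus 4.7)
The plan is to realize $\pi_b$ as averaging over the $S^1$-action on $P_\infty$ and then exploit the fact that this action is by isometries of $g^{GH}_\epsilon$. First, let $\Phi_\theta \colon P_\infty \to P_\infty$ denote the action of $\theta \in S^1$ generated by the fundamental vector field $V$ dual to the connection $\eta$ (so that $\eta(V) = 1$ and $V$ is vertical). Since $g_B$ and $h$ are pulled back from $B$, they are $\Phi_\theta$-invariant; since $\eta$ is a principal connection, it is also $\Phi_\theta$-invariant. Therefore each $\Phi_\theta$ is an isometry of $g^{GH}_\epsilon = h_\epsilon g_B + \epsilon^2 h_\epsilon^{-1}\eta^2$, and consequently $\Phi_\theta^* \, \Delta_{g^{GH}_\epsilon} = \Delta_{g^{GH}_\epsilon}\, \Phi_\theta^*$ on smooth functions.

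Next I would rewrite the fibre integral in Definition~\ref{def:analysis-asymptotic:S1-invarariant-functions} as an $S^1$-average. Parametrising the fibre over $x$ via $\theta \mapsto \Phi_\theta(p_0)$ for some choice of $p_0 \in \pi^{-1}(x)$, and using $\eta(V)=1$ so that $\eta$ restricts to $d\theta$ on each fibre, one obtains
\begin{equation*}
(\pi_b u)(p) \;=\; \frac{1}{2\pi}\int_{0}^{2\pi} u(\Phi_\theta(p))\, d\theta \;=\; \frac{1}{2\pi}\int_{0}^{2\pi} (\Phi_\theta^* u)(p)\, d\theta
\end{equation*}
for every $p \in P_\infty$, since $\pi_b u$ is $S^1$-invariant and the two expressions agree when averaged over the fibre.

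Combining the two observations gives the result: for any smooth $u$,
\begin{equation*}
\Delta_{g^{GH}_\epsilon}(\pi_b u) \;=\; \frac{1}{2\pi}\int_0^{2\pi} \Delta_{g^{GH}_\epsilon}(\Phi_\theta^* u)\, d\theta \;=\; \frac{1}{2\pi}\int_0^{2\pi} \Phi_\theta^*(\Delta_{g^{GH}_\epsilon} u)\, d\theta \;=\; \pi_b(\Delta_{g^{GH}_\epsilon} u),
\end{equation*}
where the interchange of $\Delta$ with the $\theta$-integral is justified by smoothness in $\theta$ and the compactness of $S^1$ (and can be extended to distributions by duality since $\pi_b$ is self-adjoint with respect to the $S^1$-invariant volume form). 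I do not foresee a serious obstacle here; the only mild subtlety is pinning down the normalisation so that $\pi_b$ really equals the $S^1$-average rather than a constant multiple of it, which is handled by the condition $\eta(V)=1$ in the gauge of Remark~\ref{remark:radially-invariant-connection}.
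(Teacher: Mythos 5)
Your proof is correct and rests on the same idea as the paper's (which simply says to verify the commutation explicitly in a local trivialization using the $S^1$-invariance of the metric): you identify $\pi_b$ with averaging over the isometric $S^1$-action and commute $\Delta_{g^{GH}_\epsilon}$ through the integral. Your invariant packaging via $\Phi_\theta^*\Delta = \Delta\Phi_\theta^*$ is a clean way to organize the same computation, and the normalisation point $\eta(V)=1$ matching the $\tfrac{1}{2\pi}$ in Definition~\ref{def:analysis-asymptotic:S1-invarariant-functions} is handled correctly.
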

		\begin{proof}
			Take a local trivialization of the circle bundle and perform this calculation explicitly using the $S^1$-invariance of the metric.
		\end{proof}
		\begin{lemma}
			\label{lem:analysis-asymptotic:boundedness-av-operator}
			On any $S^1$ invariant domain $U$, the operators 
			\begin{align*}
				\pi_b \colon C^0(U) \to C^0(U),
				\pi_b \colon C^{0,\alpha}_{cf}(U) \to C^{0,\alpha}_{cf}(U), \text{ and }
				\pi_b \colon \tilde L^{2}(U) \to \tilde L^2(U)
			\end{align*}
			are bounded. The same holds for $\pi_f$.
		\end{lemma}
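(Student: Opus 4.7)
My plan is to exploit the integral representation of $\pi_b$ together with the (approximate) $S^1$-invariance of the geometry. Using the $r$-invariant connection from Remark \ref{remark:radially-invariant-connection}, the restriction $\eta|_{\text{fiber}}$ coincides (after a normalization) with $ds$ for the angular parameter $s \in [0, 2\pi)$ of the $S^1$-action $\tau_s$ on $P'_\infty$. Hence for any continuous $u$,
\begin{equation*}
\pi_b u(x) = \frac{1}{2\pi}\int_0^{2\pi} u(\tau_s x)\, ds.
\end{equation*}
All three boundedness statements should follow from this formula together with the fact that $\tau_s$ preserves $g^{GH}_\epsilon$, and that $\widetilde{\Vol}$ factors as a product of a base volume form and $\eta$.

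For $C^0$, the triangle inequality applied to the integral immediately gives $\|\pi_b u\|_{C^0(U)} \le \|u\|_{C^0(U)}$. For $\tilde L^2(U)$, I would apply Cauchy--Schwarz on each fiber, obtaining
\begin{equation*}
|\pi_b u(x)|^2 \le \frac{1}{2\pi}\int_{\pi^{-1}(\pi(x))} |u|^2 \, \eta,
\end{equation*}
and then integrate against the factorized form $\widetilde{\Vol}|_{P_\infty} = \mathrm{(base\ form)}\wedge \eta$ using Fubini. Since $|\pi_b u|^2$ is constant along the fibers, integrating the fiber direction out gives $\|\pi_b u\|_{\tilde L^2(U)} \le \|u\|_{\tilde L^2(U)}$.

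The $C^{0,\alpha}_{cf}$ estimate is the step that requires a bit more care, and is where I expect the main (though still minor) obstacle: it relies on $\tau_s$ being an isometry of $g_{cf}$. Since $\Omega$ is $S^1$-invariant and $g^{GH}_\epsilon$ is manifestly $S^1$-invariant, $\tau_s$ is isometric for the conformally rescaled Gibbons--Hawking metric. The true metric $g_\epsilon$ differs from $g^{GH}_\epsilon$ by a term of order $r^{-\nu}$, so the distortion $d_{cf}(\tau_s x_1, \tau_s x_2)/d_{cf}(x_1, x_2)$ is bounded above and below uniformly in $\epsilon$, $s$, and $x_1, x_2 \in P'_\infty$. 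Writing
\begin{equation*}
\pi_b u(x_1) - \pi_b u(x_2) = \frac{1}{2\pi}\int_0^{2\pi} \bigl[u(\tau_s x_1) - u(\tau_s x_2)\bigr]\, ds,
\end{equation*}
applying the Hölder seminorm of $u$ pointwise in $s$, and using the distortion estimate, yields $[\pi_b u]_{C^{0,\alpha}_{cf}(U)} \le C [u]_{C^{0,\alpha}_{cf}(U)}$.

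Finally, boundedness of $\pi_f = \mathrm{Id} - \pi_b$ on each of the three spaces is immediate from the triangle inequality and boundedness of $\pi_b$.
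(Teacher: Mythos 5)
Your argument is correct and is essentially the paper's: the key step in both is to average the rotated difference quotient over the circle action and use that rotation is an isometry of $g_{cf}$, so the H\"older seminorm is preserved (the $C^0$ and $\tilde L^2$ bounds being routine). The only difference is that you explicitly account for $g_\epsilon$ being merely asymptotic to the $S^1$-invariant $g^{GH}_\epsilon$ via a uniform distortion bound, where the paper simply invokes $S^1$-invariance of the metric; this extra care is harmless and, if anything, slightly more complete.
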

		\begin{proof}
			The only non-trivial thing to show is that the H\"older semi-norm is bounded. So let $u \in C^{0, \alpha}_{cf}(U)$ and $x,y \in U$ such that $\left|\frac{u(x) - u(y)}{d(x,y)^\alpha}\right| \le \|u\|_{C^{0, \alpha}_{cf}(U)}$. By rotating along the fiber $\left|\frac{u(e^{it} \cdot x) - u(e^{it} \cdot y)}{d(e^{it} \cdot x,e^{it} \cdot y)^\alpha}\right| \le \|u\|_{C^{0, \alpha}_{cf}(U)}$. Because the metric is $S^1$-invariant, $\left|\frac{u(e^{it} \cdot x) - u(e^{it} \cdot y)}{d(x, y)^\alpha}\right| \le \|u\|_{C^{0, \alpha}_{cf}(U)}$ and so averaging over $t \in [0,2 \pi]$ yields
			\begin{align*}
				\left|\frac{1}{2 \pi}\int_{0}^{2 \pi}
				\frac{u(e^{i t} \cdot x) - u(e^{i t} \cdot y)}{d( x, y)^{\alpha}}
				\d t \right|
				= 
				\frac{|u_b(x) - u_b(y)|}{d( x, y)^{\alpha}}
				\le 
				\|u\|_{C^{0, \alpha}_{cf}(U)}.
			\end{align*}
		\end{proof}
\begin{proposition}[Poincar\'e inequality]
			\label{prop:analysis-asymptotic:poincare-inequality}
			Let $x \in P'_\infty$ and denote the orbit of $x$ as $S^1 \cdot \{x\}$.
			For any continuous function $u$ that satisfies $u = u_f$,
			\begin{align*}
				\|u\|_{C^{0}_{cf}(S^1 \cdot \{x\})} 
				\le& 2\pi \frac{\epsilon \: \Omega}{\sqrt{h_\epsilon}} \| \d u \|_{C^0_{cf}({S^1 \cdot \{x\})}} \\
				\|u\|_{L^2(S^1 \cdot \{x\})} 
				\le& \frac{\epsilon \: \Omega}{\sqrt{h_\epsilon}} \| \d u \|_{\tilde L^{2}(S^1 \cdot \{x\})}.
			\end{align*}
		\end{proposition}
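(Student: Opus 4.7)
\medskip

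\noindent
\textbf{Proof plan for Proposition \ref{prop:analysis-asymptotic:poincare-inequality}.}
The plan is to reduce the inequality to the classical Poincar\'e inequality on the standard circle $(S^1, \d\theta)$ of circumference $2\pi$, and then to convert derivatives in $\theta$ into the $g_{cf}$-norm of $\d u$. Since $\Omega$ and $h_\epsilon$ are pulled back from the base, they are constant along a single fiber, which is what will make the conversion clean.

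First, I trivialize the fiber $S^1\cdot\{x\}$ by writing $\eta|_{S^1\cdot\{x\}} = \d\theta$ for $\theta\in[0,2\pi)$; this is the standard identification coming from the fact that $\eta$ is a connection one-form on a principal $S^1$-bundle. Under this identification, the condition $u = u_f$ is exactly $\int_0^{2\pi} u(\theta)\,\d\theta = 0$, by Definition \ref{def:analysis-asymptotic:S1-invarariant-functions}. Next I compute the restriction of $g_{cf}$ to the fiber. From Definition \ref{def:asymptotic-geometry:metric-cf}, $g_{cf}|_{\text{fiber}} = \Omega^2 \cdot \epsilon^2 h_\epsilon^{-1}\,\eta^2$, hence $|\d\theta|_{cf} = \sqrt{h_\epsilon}/(\epsilon\,\Omega)$, and consequently for any $u$ on the fiber one has the pointwise relation
\begin{equation*}
|\d u|_{cf} \;=\; \frac{\sqrt{h_\epsilon}}{\epsilon\,\Omega}\,|\partial_\theta u|.
\end{equation*}

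For the $C^0$ bound, the zero mean condition ensures the existence of some $\theta_0$ with $u(\theta_0) = 0$; by the fundamental theorem of calculus applied along the circle, $|u(\theta)| \le 2\pi\,\sup|\partial_\theta u|$ (the factor $2\pi$ is not optimal but suffices). Substituting the identity for $|\d u|_{cf}$ yields the first inequality. For the $L^2$ bound, I apply the sharp Poincar\'e inequality on $(S^1,\d\theta)$ with circumference $2\pi$ to the zero-mean function $u(\theta)$, namely
\begin{equation*}
\int_0^{2\pi}|u(\theta)|^2\,\d\theta \;\le\; \int_0^{2\pi}|\partial_\theta u(\theta)|^2\,\d\theta,
\end{equation*}
and convert via the identity above. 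Here I use the interpretation that, on a single fiber, both $L^2$ and $\tilde L^2$ are taken with respect to the measure $\eta = \d\theta$ induced from $\widetilde{\Vol}$ in Definition \ref{def:asymptotic-geometry:weighted-norm}, so the only metric-dependent quantity is $|\d u|_{cf}$, which absorbs exactly one factor of $\sqrt{h_\epsilon}/(\epsilon\Omega)$ into $L^2$ and gives the stated constant $\epsilon\,\Omega/\sqrt{h_\epsilon}$.

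There is essentially no obstacle: the whole argument is a one-dimensional calculation on a circle, together with bookkeeping for the conformal and $\eta$-rescalings. The only mildly delicate point is to be consistent about which measure is used for the $L^2$-norm on the fiber (the induced $g_{cf}$ measure versus the $\eta$-measure from $\widetilde{\Vol}$); once this is fixed, the standard Poincar\'e inequality on $S^1$ does all the work.
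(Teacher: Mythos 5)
Your proposal is correct and follows essentially the same route as the paper: the $C^0$ bound via the fundamental theorem of calculus from a zero of $u$ on the fiber (guaranteed by the zero-mean condition), and the $L^2$ bound via the sharp Poincar\'e--Wirtinger inequality on $S^1$, which the paper proves inline by the same Fourier expansion $u=\sum_n u_n e^{int}$ that underlies the classical statement you invoke. Your bookkeeping of the fiber metric $\Omega^2\epsilon^2 h_\epsilon^{-1}\eta^2$ and of the $\widetilde{\Vol}$-induced measure $\eta=\d\theta$ matches the paper's computation of $\|\del_t\|_{cf}=\epsilon\,\Omega/\sqrt{h_\epsilon}$, so no changes are needed.
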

		\begin{proof}
			Let $(x,t) \in P'_\infty$. Because $u$ is $S^1$ non-invariant, there exists a $t_0 \in S^1$ such that 
			$u(x, t_0) = 0$. By the fundamental theorem of calculus, $u(x,t) = \int_{t_0}^t \frac{\del u}{\del t}  \d t$.
			From definition \ref{def:asymptotic-geometry:metric-cf} we estimate the circle radius, and hence
			\begin{align*}
				u(x,t) 
				\le  \int_{t_0}^t \| \d u \|_{cf} \cdot \| \del_t \|_{cf}  \d t 
				\le 2\pi \frac{\epsilon \: \Omega}{\sqrt{h_\epsilon}} \| \d u \|_{C^0_{cf}({S^1 \cdot \{x\})}}.
			\end{align*}
			In order to find the $L^2$ estimate, write $u(x,t) = \sum_n u_n(x) e^{int}$ and note that
			\begin{align*}
				\|u\|^2_{\tilde L^2(S^1 \cdot \{x\})} =\int |u|^2 \d t
				=  \sum_{n=1}^\infty   u_n^2 
				\le  \sum_{n=0}^\infty  n^2 u_n^2 
				\le \|\d u (\del_t)\|^2_{\tilde L^2(S^1 \cdot \{x\})}.
			\end{align*}
			Therefore, 
			$$
			\|u\|^2_{\tilde L^2(S^1 \cdot \{x\})}  \le \|\d u\|^2_{\tilde L^2(S^1 \cdot \{x\})} \cdot \|\del_t\|^2_{C^0(S^1 \cdot \{x\})}.
			$$
			
		\end{proof}
\noindent
If the circle fiber collapses at infinity, this Poincar\'e inequality will enable us to improve Theorem \ref{thm:analysis-asymptotic:schauder-global-holder} into Theorem \ref{thm:analysis-asymptotic:special-perp-schauder-polynomial-case}. This is true for all cases except when $P_\infty$ is a trivial circle bundle, i.e. when the end is modelled on the ALH gravitational instanton. For this case we need the extra requirement that the collapsing parameter $\epsilon$ is sufficiently small.
\begin{proof}[Proof of Theorem \ref{thm:analysis-asymptotic:special-perp-schauder-polynomial-case}] 
			\noindent
%			\textit{(N.B. In this proof the value of $C$ will change from line to line.)}
			Assume without loss of generality that $B = \R^3$ and $u \in W^{2,2}_\delta(P'_\infty)$. Consider the case $u = u_f$ and $g_\epsilon = g^{GH}_\epsilon$.
			The elliptic regularity estimate from Theorem \ref{thm:analysis-asymptotic:schauder-global-Sobolev} states,
			\begin{align*}
				\| u_f\|_{W^{2,2}_{\delta}(P_\infty)} 
				\le& C \left[
				\| \Omega^{-2} \Delta_{g^{GH}_\epsilon} u_f\|_{L^{2}_{\delta}(P'_\infty)}
				+ \| u_f\|_{L^{2}_{\delta}(P'_\infty)}
				\right] \\
				\le& C \left[
				\| \Omega^{-2} \Delta_{g^{GH}_\epsilon} u_f\|_{L^{2}_{\delta}(P'_\infty)}
				+ \| u_f\|_{L^{2}_{\delta}(P'_\infty \setminus P_\infty)}
				+ \| u_f\|_{L^{2}_{\delta}(P_\infty)}
				\right].
			\end{align*}
			Using the Poincaré inequality, we rewrite this as
			\begin{align*}
				\| u_f\|_{W^{2,2}_{\delta}(P_\infty)} 
				\le& C \left[
				\| \Omega^{-2} \Delta_{g^{GH}_\epsilon} u_f\|_{L^{2}_{\delta}(P'_\infty)}
				+ \| u_f\|_{L^{2}_{\delta}(P'_\infty \setminus P_\infty)} + \right. \\
				& \qquad\qquad\qquad\qquad\left. + \left(\max_{P_\infty} \frac{\epsilon \Omega}{\sqrt{h_\epsilon}}\right) \cdot\| \d u_f\|_{L^{2}_{\delta}(P_\infty)}
				\right].
			\end{align*}
			The term $\frac{\Omega}{h_\epsilon}$ is at least of order $e^{-\rho}$ when the model end is ALF, ALG, ALG* or ALH* and so in those cases we can pick $P_\infty$ such that $\max_{P_\infty} \frac{\epsilon \Omega}{\sqrt{h_\epsilon}} < \frac{1}{2}$. In the ALH case, this condition is part of the theorems assumptions. Hence, for all model ends we conclude
			\begin{align*}
				\frac{1}{2}\| u_f\|_{W^{2,2}_{\delta}(P_\infty)} 
				\le& C \left[
				\| \Omega^{-2}  \Delta_{g^{GH}_\epsilon} u_f\|_{L^{2}_{\delta}(P'_\infty)}
				+ \| u_f\|_{L^{2}_{\delta}(P'_\infty \setminus P_\infty)}
				\right].
			\end{align*}			
			\noindent
			Secondly, consider the case $u = u_b$ and $g_\epsilon = g^{GH}_\epsilon$. For $S^1$ invariant functions, $\Omega^{-2} \Delta_{g^{GH}_\epsilon}$ reduces to the standard Laplacian $\Delta^B$ on the base space. The operator $\Delta^B$ is Fredholm in the norms given by \cite{Bartnik1986} when $\delta \not \in \Z$. By Lemma 	\ref{lem:analysis-asymptotic:christoffel-gtilde}, these norms are equivalent to the Sobolev norms introduced in Definition \ref{def:asymptotic-geometry:weighted-norm}. Therefore, there exists a uniform constant $C > 0$ independent of $u$ such that 
			\begin{align*}
				\| u_b\|_{W^{2,2}_{\delta}(P_\infty)} 
				\le& C \left[
				\| \Omega^{-2} \Delta u_b\|_{L^{2}_{\delta}(P'_\infty)}
				+ \| u_b\|_{L^{2}_{\delta}(P'_\infty \setminus P_\infty)}
				\right].
			\end{align*}
			Combining these estimates and using Lemma \ref{lem:analysis-asymptotic:laplacian-compatible-splitting} and \ref{lem:analysis-asymptotic:boundedness-av-operator}, we conclude that for any $u = u_b + u_f \in W^{2,2}_\delta (P'_\infty)$,
			\begin{align*}
				\| u\|_{W^{2,2}_{\delta}(P_\infty)} 
				\le& \| u_b\|_{W^{2,2}_{\delta}(P_\infty)}  + \| u_f\|_{W^{2,2}_{\delta}(P_\infty)}  \\
				\le& 4C \left[
				\| \Omega^{-2} \Delta_{g^{GH}_\epsilon} u\|_{L^{2}_{\delta}(P'_\infty)}
				+ \| u\|_{L^{2}_{\delta}(P'_\infty \setminus P_\infty)}
				\right],
			\end{align*}
			Finally, we consider the general case where $\| \nabla^k (g_\epsilon - g^{GH}_\epsilon) \| = \O(r^{-k - \upsilon})$. Using elliptic regularity, one can find some neighborhood $P''_\infty \supset P'_\infty$ and some constant $C ' > 0$ such that
			\begin{align*}
				\| u\|_{W^{2,2}_{\delta}(P_\infty)} 
				\le& 4C \left[
				\| \Omega^{-2} \Delta_{g^{GH}_\epsilon} u\|_{L^{2}_{\delta}(P'_\infty)}
				+ \| u\|_{L^{2}_{\delta}(P'_\infty \setminus P_\infty)}
				\right] \\
				\le& C' \left[
				\| \Omega^{-2} \Delta_{g_\epsilon} u\|_{L^{2}_{\delta}(P''_\infty)}
				+ \| u\|_{L^{2}_{\delta}(P'_\infty \setminus P_\infty)} + \right. \\
				& \left.
				\qquad\qquad\qquad				+ \| \Omega^{-2} (\Delta_{g^{GH}_\epsilon} - \Delta_{g_\epsilon} )\|_{op} \cdot \| u\|_{L^2_{\delta}(P''_\infty)} 
				\right].
			\end{align*}
			For any model end we consider, one has $\lim_{\rho \to \infty} |\nabla^k_{cf} (g_\epsilon - g^{GH}_\epsilon)| = 0$ for all $k \in \N$ and this limit is uniform in the collapsing parameter $\epsilon$.
			Therefore, the operator norm of $\| \Omega^{-2} (\Delta_{g_\epsilon} - \Delta_{g^{GH}_\epsilon})\|_{cf}$ can be chosen arbitrary small by translating the domains of $P_\infty$, $P'_\infty$ and $P''_\infty$. Hence, by reordering, enlarging $C'$ and redefining $P''_\infty$ as $P'_\infty$,
			\begin{align*}
				\| u\|_{W^{2,2}_{\delta}(P_\infty)} 
				\le& C' \left[
				\| \Omega^{-2} \Delta_{g_\epsilon} u\|_{L^{2}_{\delta}(P'_\infty)}
				+ \| u\|_{L^{2}_{\delta}(P'_\infty \setminus P_\infty)}
				\right],
			\end{align*}			
			which is one of the estimations we need to show. The other estimates follow by a similar argument.
		\end{proof}			
		\noindent
		Identical to the proofs of Theorems 9.1.1 and 9.2.1 in \cite{Pacard2008}, we now conclude
		\begin{corollary} 
			\label{cor:fredholmness}
			Let $W^{2, 2}_{\delta, 0}(P'_\infty)$ be the space of all $W^{2,2}_{\delta}(P'_\infty)$ functions that satisfy $u|_{\del P'_\infty} = 0$.
			Under the conditions described in Theorem \ref{thm:analysis-asymptotic:special-perp-schauder-polynomial-case}, the operator
			$$
			\Omega^{-2} \Delta \colon W^{2,2}_{\delta, 0}(P'_\infty) \to L^{2}_{\delta}(P'_\infty)
			$$
			is Fredholm.
		\end{corollary}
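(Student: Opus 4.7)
The plan is to extract Fredholmness from the a priori estimate of Theorem \ref{thm:analysis-asymptotic:special-perp-schauder-polynomial-case} via the classical semi-Fredholm argument, exactly in the spirit of Theorems 9.1.1 and 9.2.1 of \cite{Pacard2008}. First I would fix $P_\infty \subseteq P'_\infty$ so that the collar $P'_\infty \setminus P_\infty$ is relatively compact; this is possible because, topologically, $P'_\infty \setminus P_\infty$ lies in a product $[R_1, R_2] \times \Sigma$ with $\Sigma$ compact. On this compact set the weighted norms $W^{2,2}_\delta$ and $L^2_\delta$ are equivalent to the unweighted ones, so Rellich–Kondrachov furnishes a compact embedding $W^{2,2}_\delta(P'_\infty \setminus P_\infty) \hookrightarrow L^2_\delta(P'_\infty \setminus P_\infty)$.

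Second, I would use the Dirichlet version of the a priori estimate
$$
\|u\|_{W^{2,2}_\delta(P'_\infty)} \le C'\Bigl[\|\Omega^{-2}\Delta u\|_{L^2_\delta(P'_\infty)} + \|u\|_{L^2_\delta(P'_\infty\setminus P_\infty)}\Bigr]
$$
together with this compact embedding to run the two standard compactness arguments. For the kernel: any $W^{2,2}_\delta$-bounded sequence $u_n \in \ker(\Omega^{-2}\Delta)$ has an $L^2_\delta(P'_\infty \setminus P_\infty)$-convergent subsequence, which the estimate upgrades to a $W^{2,2}_\delta$-Cauchy subsequence; hence $\ker(\Omega^{-2}\Delta)$ has a relatively compact unit ball and is finite-dimensional. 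For closed range: let $X$ be a closed complement of $\ker(\Omega^{-2}\Delta)$ in $W^{2,2}_{\delta,0}(P'_\infty)$; a contradiction argument—assuming $u_n \in X$ with $\|u_n\|_{W^{2,2}_\delta} = 1$ and $\|\Omega^{-2}\Delta u_n\|_{L^2_\delta} \to 0$, then extracting a convergent subsequence—produces a nonzero element of $X \cap \ker(\Omega^{-2}\Delta)$, which is impossible. This yields the sharpened estimate $\|u\|_{W^{2,2}_\delta} \le C''\|\Omega^{-2}\Delta u\|_{L^2_\delta}$ on $X$, and closed range is immediate.

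Finally, for the cokernel I would pass to the formal $L^2_\delta$-adjoint. Since $\Delta_{g_\epsilon}$ is formally self-adjoint and the conjugation $e^{-\delta\rho}\,\cdot\,e^{\delta\rho}$ only alters first-order and zeroth-order coefficients in a manner controlled by $\d\rho$ and its covariant derivatives (Definition \ref{def:asymptotic-geometry:weighted-operator}, Lemma \ref{lem:analysis-asymptotic:christoffel-gtilde}), the adjoint is again a strictly elliptic operator satisfying all the hypotheses that went into Theorem \ref{thm:analysis-asymptotic:special-perp-schauder-polynomial-case} with weight $-\delta$ in place of $\delta$ (and $-\delta \notin \Z$ since $\delta \notin \Z$). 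Re-running the semi-Fredholm argument on the adjoint shows its kernel—which, via the Dirichlet boundary condition, represents the $L^2_\delta$-cokernel of $\Omega^{-2}\Delta$—is finite-dimensional.

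The one step that requires a little care is the cokernel identification: the natural pairing between $L^2_\delta(P'_\infty)$ and $L^2_{-\delta}(P'_\infty)$ (with the measure $\widetilde{\Vol}$ of Definition \ref{def:asymptotic-geometry:weighted-norm}) must be used consistently, and one must verify that the Poincaré step of Theorem \ref{thm:analysis-asymptotic:special-perp-schauder-polynomial-case} still applies on the $S^1$-non-invariant part of the adjoint; this however uses only the $S^1$-invariance of the metric and the same smallness of $\epsilon\,\Omega/\sqrt{h_\epsilon}$, so no new condition beyond those of the theorem is needed.
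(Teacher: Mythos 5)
Your argument is correct and is exactly the route the paper takes: the paper's proof of this corollary is a one-line appeal to Theorems 9.1.1 and 9.2.1 of \cite{Pacard2008}, i.e.\ precisely the semi-Fredholm argument (finite-dimensional kernel and closed range from the a priori estimate plus Rellich on the relatively compact collar $P'_\infty\setminus P_\infty$, finite-dimensional cokernel via the formal adjoint) that you spell out. The only small inaccuracy is that, per Proposition \ref{prop:analysis-asymptotic:cokernel}, the dual weight in the $B=\R^3$ case is $-(\delta+1)$ rather than $-\delta$; since $\delta\notin\Z$ implies $-(\delta+1)\notin\Z$, this does not affect the conclusion.
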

	\section{Invertibility of the Laplacian}
		\label{subset:analysis-asymptotic:fredholm-theory}
		In order to understand the index of $\Omega^{-2} \Delta_{g_\epsilon}$, we will study the kernel and co-kernel of $\Omega^{-2} \Delta_{g^{GH}_\epsilon}$ in more detail.
		In the case $\delta < 0$, injectivity follows from the maximum principle, because for this case functions inside $C^{k,\alpha}_\delta$ must decay. In the next proposition get an improved version for $S^1$ non-invariant functions. 
		\begin{proposition}
			\label{prop:analysis-asymptotic:laplace-equation-dirichlet-perp-holder}
			One can define $P'_\infty$ such that there exists a $\tilde \delta > 0$, such that for any $\delta < \tilde \delta$ and $\alpha \in (0,1)$ there are no non-zero $u \in C^{2,\alpha}_\delta (P'_\infty)$ that satisfy
			\begin{align*}
				u \text{ is } S^1 \text{ non-invariant}, \qquad
				\Delta_{g^{GH}_\epsilon} u = 0, \quad\text{ and} \quad
				u|_{\del P'} = 0.
			\end{align*}
		\end{proposition}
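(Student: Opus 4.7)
The approach is a weighted energy argument that leverages the Poincar\'e inequality of Proposition \ref{prop:analysis-asymptotic:poincare-inequality}. Because $u$ is $S^1$ non-invariant, that proposition gives $\|u\|_{L^2(\mathrm{orbit})}\le (\epsilon\Omega/\sqrt{h_\epsilon})\,\|du\|_{L^2(\mathrm{orbit})}$, and the Poincar\'e constant $c_P:=\sup_{P'_\infty}(\epsilon\Omega/\sqrt{h_\epsilon})$ can be made small on $P'_\infty$: for the ALF, ALG, ALG$^*$ and ALH$^*$ model ends either $\Omega$ decays or $h_\epsilon$ grows, so $c_P\to 0$ as the interior compact set complementary to $P'_\infty$ is enlarged; for the ALH model the smallness comes uniformly from the collapsing parameter $\epsilon$.

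The main step is a weighted energy identity. Introduce a cutoff $\chi_R$ supported in $\{\rho\le R+1\}$, equal to $1$ on $\{\rho\le R\}$, and with $|d\chi_R|_{g_{cf}}$ uniformly bounded. Test the equation $\Delta_{g^{GH}_\epsilon}u=0$ against $\chi_R^2 e^{-2\delta\rho}u$ and integrate by parts with respect to the natural volume form of $g^{GH}_\epsilon$; the boundary term on $\partial P'_\infty$ vanishes because $u=0$ there, and the one at infinity is absent by the compact support of $\chi_R$. The result is
\begin{equation*}
\int \chi_R^2\,e^{-2\delta\rho}\,|du|^2\,dV = 2\delta\int \chi_R^2\,u\,e^{-2\delta\rho}\langle d\rho,du\rangle\,dV \;-\; 2\int \chi_R\,u\,e^{-2\delta\rho}\langle d\chi_R,du\rangle\,dV.
\end{equation*}
Applying Cauchy--Schwarz together with the orbit-wise Poincar\'e inequality (integrated over the base) bounds the first term on the right by $2|\delta|\,\|d\rho\|_\infty\,c_P$ times the left-hand side. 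Choosing $\tilde\delta>0$ small enough that $2\tilde\delta\,\|d\rho\|_\infty\,c_P<1/2$, that term is absorbed into the left-hand side.

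The remaining cutoff term is supported on the annulus $A_R=\{R\le\rho\le R+1\}$ and, again via Cauchy--Schwarz and Poincar\'e, is controlled by a constant multiple of $\|\chi_R\,du\,e^{-\delta\rho}\|^2_{L^2(A_R)}$. To see this tends to zero as $R\to\infty$, I would use the inclusion $C^{2,\alpha}_\delta(P'_\infty)\hookrightarrow W^{1,2}_{\delta'}(P'_\infty)$ for any $\delta'>\delta$, which is immediate from the cylindrical-like geometry of $(P_\infty,g_{cf})$; choosing $\delta'\in(\delta,\tilde\delta)$ gives $\|du\,e^{-\delta'\rho}\|_{L^2(P'_\infty)}<\infty$, so the annular tail vanishes in the limit. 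Passing to $R\to\infty$ then forces $du\equiv 0$ on $P'_\infty$, hence $u$ is constant on each connected component, and the Dirichlet condition $u|_{\partial P'_\infty}=0$ gives $u\equiv 0$.

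The most delicate point is the bookkeeping between the various weights and volume forms: the conformal factor $\Omega$ relates $|du|_{g^{GH}_\epsilon}$ to $|du|_{g_{cf}}$, the integration by parts is performed against $dV_{g^{GH}_\epsilon}$, while the weighted H\"older and Sobolev norms are defined with respect to the unwrapped measure $d\widetilde\Vol$. The conversion factors differ among the five model ends and ultimately determine both the threshold $\tilde\delta$ and how far out $P'_\infty$ must be pushed to make the absorption and vanishing arguments work simultaneously.
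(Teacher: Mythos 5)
Your strategy---a weighted energy identity combined with the orbitwise Poincar\'e inequality of Proposition \ref{prop:analysis-asymptotic:poincare-inequality}, with the $\delta$-term absorbed once $\delta$ lies below a threshold governed by $c_P=\sup(\epsilon\Omega/\sqrt{h_\epsilon})$---is essentially the paper's. The only structural difference is how the contribution at infinity is controlled: the paper exhausts $P'_\infty$ by the compact sets $U_r$ and estimates the resulting outer boundary integral $\int_{\del U_r}e^{-4\delta\rho}u\:*\d u$ directly from the pointwise decay $u,\del_\rho u=\O(e^{\delta\rho})$ encoded in the H\"older norm (the term visibly tends to zero for the relevant range of $\delta$, with smaller $\delta$ then handled by the inclusion of the weighted H\"older spaces), whereas you use cutoffs $\chi_R$ and try to kill the annulus term by an $L^2$ tail argument.

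That last step is where your argument has a gap. The cutoff term in your identity carries the weight $e^{-2\delta\rho}$, and after Cauchy--Schwarz and Poincar\'e it is controlled by $\int_{A_R}e^{-2\delta\rho}|\d u|^2$. The finiteness you invoke is that of $\int e^{-2\delta'\rho}|\d u|^2$ for $\delta'>\delta$; this makes the $\delta'$-weighted annular tails vanish, but the $\delta$-weighted ones need not: from $u\in C^{2,\alpha}_\delta$ one only gets $e^{-2\delta\rho}|\d u|^2=\O(1)$, so $\int_{A_R}e^{-2\delta\rho}|\d u|^2$ is merely bounded (and genuinely fails to vanish if $|\d u|$ grows at the full rate $e^{\delta\rho}$); the conversion factor $e^{2(\delta'-\delta)R}$ between the two weights exactly cancels the decay of the $\delta'$-tail. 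Two standard repairs: (i) run the entire identity with the weight $\delta'\in(\delta,\tilde\delta)$---absorption still works since $\delta'<\tilde\delta$, all integrals now converge absolutely, and the conclusion $\d u\equiv 0$ is unchanged; or (ii) bootstrap: the absorbed identity together with the $\O(1)$ bound on each annulus gives $\int_{\{\rho\le R\}}e^{-2\delta\rho}|\d u|^2\le C$ uniformly in $R$, hence $\|\d u\|_{L^2_\delta}<\infty$, hence the $\delta$-weighted annular tails do vanish on a second pass. With either fix your proof closes, and the endgame ($\d u=0$, hence $u$ constant, hence $u=0$ either by the Dirichlet condition or directly from the Poincar\'e inequality since $u$ has zero fiber average) agrees with the paper's.
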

		\begin{proof}
			Recall we defined $P'_\infty$ to be the circle bundle over $[R_1, \infty) \times \Sigma$, where $\Sigma$ is a compact set. Let $R > R_1$ and define $U_r \subset P'_\infty$ to be the circle bundle over $[R_1, r] \times \Sigma$.
			Using integration by parts, one can show that for any harmonic function $u$ on $U_r$ and $\delta \in \R$,
			\begin{align*}
				\| \d (e^{- 2\delta \rho} u) \|^2_{L^2_{GH}(U_r)} 
				=& \int_{\del U_r} e^{- 4\delta \rho} u\:  *^{GH} \d  u
				+ 4\delta^2 \cdot \|  e^{- 2\delta \rho} u \: \d \rho   \|^2_{L^2_{GH}(U_r)}.
			\end{align*}
			With respect to $g^{GH}_\epsilon$, the norm of $\d \rho$ is $\frac{1}{\sqrt h_\epsilon}$ or $\frac{1}{r \sqrt h_\epsilon}$ when $B = \R\times T^2$ or $B \not = \R \times T^2$ respectively. In any case this is bounded by one.
			By the Poincaré inequality, $\| e^{- 2\delta \rho} u \|^2_{L^2_{GH}(U_r)}  \le 2 \pi \epsilon \cdot \| \d (e^{- 2\delta \rho} u) \|^2_{L^2_{GH}(U_r)}$ and hence
			\begin{align*}
				(1 - 8 \pi \epsilon \delta^2\cdot \|  \d \rho \|^2_{C^0_{GH}(P'_\infty)} ) \cdot \| \d(e^{- 2\delta \rho} u) \|^2_{L^2_{GH}(U_r)}  \le  \int_{\del U_r} e^{- 4 \delta \rho} u\:  *^{GH} \d  u.
			\end{align*}
			We pick $R_1$ and $\delta$ such that $8 \pi \epsilon \delta^2 \cdot \|  \d \rho \|^2_{C^0_{GH}(P'_\infty)}  < 1$.
			\\
			
			\noindent
			Finally, we use the fact that $u$ vanishes on $\del P'$. We  are left with
			\begin{align*}
				\int_{\del U_r} e^{- 4 \delta \rho} u\:  *^{GH} \d  u =& \begin{cases}
					\epsilon
					e^{(1- 4 \delta) R} u(R) \frac{\del u}{\del \rho}(R)\:  
					\int_{\Sigma}  
					\Vol_{S^2} \wedge  \d t &\text{if } B = \R^3 \\
					\epsilon e^{- 4 \delta R} u(R) \frac{\del u}{\del \rho}(R) \int_{\Sigma}\:
					\Vol_{S^1 \times S^1} \wedge  \d t &\text{if } B = \R^2 \times S^1 \\
					\epsilon \: e^{- 4 \delta R} \:  u(R) \frac{\del u}{\del \rho}(R)\:  
					\int_{\Sigma}\Vol_{T^2} \wedge \d t &\text{otherwise}.
				\end{cases}
			\end{align*}
			When $u \in C^{k, \alpha}_\delta(P'_\infty)$, there is a constant $C > 0$ such that
			\begin{align*}
				\int_{\del U_r} e^{- 4 \delta \rho} u\:  *^{GH} \d  u \le&  C \cdot \begin{cases}
					e^{(1- 2 \delta) R}  &\text{if } B = \R^3 \\
					e^{- 2 \delta R} & \text{otherwise}.
				\end{cases}
			\end{align*}
			This vanishes at infinity when $\delta > 0$ or when $\delta > \frac{1}{2}$. This implies that in the limit $r \to \infty$, $\| \d(e^{- 2\delta \rho} u )\|^2_{L^2_{GH}(U_r)}  = 0$ and hence $u$ must be a multiple of $e^{\delta \rho}$. The only $S^1$ non-invariant function that satisfies this is the constant zero function.
		\end{proof}

		\noindent
		Using Theorem \ref{thm:analysis-asymptotic:schauder-global-holder} and Theorem \ref{thm:analysis-asymptotic:schauder-global-nash-moser}, we can extend this result to Sobolev spaces:
		\begin{corollary}
			\label{cor:analysis-asymptotic:laplace-equation-dirichlet-perp-Schauder}
			One can define $P'_\infty$ such that there exists a $\tilde \delta > 0$, such that for any $\delta < \tilde \delta$ and $\alpha \in (0,1)$ there are no non-zero $u \in W^{2,2}_\delta (P'_\infty)$ that satisfy
			\begin{align*}
				u \text{ is } S^1 \text{ non-invariant}, \qquad
				\Delta_{g^{GH}_\epsilon} u = 0, \quad\text{ and} \quad
				u|_{\del P'} = 0.
			\end{align*}
		\end{corollary}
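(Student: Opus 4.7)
The plan is a standard regularity bootstrap: promote any hypothetical $u \in W^{2,2}_\delta(P'_\infty)$ satisfying the three conditions to an element of $C^{2,\alpha}_\delta(P'_\infty)$ enjoying the same properties, whereupon Proposition \ref{prop:analysis-asymptotic:laplace-equation-dirichlet-perp-holder} applied to the same $P'_\infty$ and $\tilde\delta$ forces $u\equiv 0$. Recall that $P'_\infty$ decomposes as the disjoint union of a compact collar $P'_\infty\setminus P_\infty$, which contains the inner boundary $\partial P'_\infty$, and the asymptotic piece $P_\infty$.

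First I would treat the asymptotic piece. Since $u\in W^{2,2}_\delta(P'_\infty)$ is a fortiori $L^2_\delta$-bounded, and $\Omega^{-2}\Delta_{g^{GH}_\epsilon}u=0$ trivially belongs to $C^{0,\alpha}_\delta(P'_\infty)$, Theorem \ref{thm:analysis-asymptotic:schauder-global-nash-moser} delivers $u\in C^{0,\alpha}_\delta(P_\infty)$. Next I would treat the collar. There, $u$ is a weak $W^{2,2}$ solution of the smooth elliptic equation $\Delta_{g^{GH}_\epsilon}u=0$ with smooth Dirichlet data $0$ along the smooth compact boundary $\partial P'_\infty$, so classical interior and boundary elliptic regularity upgrade $u$ to a smooth function on the compact set $P'_\infty\setminus P_\infty$. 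On this compact set the weight $e^{-\delta\rho}$ is pinched between two positive constants, hence $u\in C^{0,\alpha}_\delta(P'_\infty\setminus P_\infty)$, and splicing this with the asymptotic estimate yields $u\in C^{0,\alpha}_\delta(P'_\infty)\subset C^{0}_\delta(P'_\infty)$.

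Now the boundary regularity half of Theorem \ref{thm:analysis-asymptotic:schauder-global-holder} applies: with $\Omega^{-2}\Delta u=0\in C^{k-2,\alpha}_\delta(P'_\infty)$ for any $k$ and $u|_{\partial P'_\infty}=0$, we obtain $u\in C^{2,\alpha}_\delta(P'_\infty)$. This $u$ is $S^1$ non-invariant, harmonic for $g^{GH}_\epsilon$, and vanishes on $\partial P'_\infty$, so Proposition \ref{prop:analysis-asymptotic:laplace-equation-dirichlet-perp-holder} gives $u\equiv 0$.

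The main, and fairly mild, obstacle is the domain bookkeeping: Theorem \ref{thm:analysis-asymptotic:schauder-global-nash-moser} returns regularity only on $P_\infty$, so one has to separately dress up the compact collar and then realign the cutoff radius $R_1$ with the choice of $P'_\infty$ and $\tilde\delta$ produced by Proposition \ref{prop:analysis-asymptotic:laplace-equation-dirichlet-perp-holder}. Since each of the three results is stated with freedom to select $R_1$ small, a single sufficiently small common choice reconciles them, and no uniformity in $\epsilon$ is required beyond what the cited theorems already supply.
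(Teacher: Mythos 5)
Your proposal is correct and follows exactly the route the paper indicates: it bootstraps the $W^{2,2}_\delta$ solution to $C^{2,\alpha}_\delta$ via Theorem \ref{thm:analysis-asymptotic:schauder-global-nash-moser} and Theorem \ref{thm:analysis-asymptotic:schauder-global-holder}, then invokes Proposition \ref{prop:analysis-asymptotic:laplace-equation-dirichlet-perp-holder}. The paper leaves this bootstrap implicit, so your careful handling of the compact collar and the domain bookkeeping is a welcome elaboration rather than a deviation.
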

			\noindent
		Proposition \ref{prop:analysis-asymptotic:laplace-equation-dirichlet-perp-holder} is not true for $S^1$ invariant functions. However, for these functions the Laplace equation can be explicitly be solved using the Fourier decomposition. For $\delta < 1$, they are the following:
		\begin{proposition}
			\label{prop:analysis-asymptotic:laplace-equation-dirichlet}
			Any $u \in C^{2, \alpha}_\delta (P'_\infty)$ or $u \in W^{2, 2}_\delta (P'_\infty)$ that satisfies
			\begin{align*}
				u \text{ is } S^1 \text{invariant}, \qquad
				\Delta_{g^{GH}_\epsilon} u = 0, \quad\text{ and} \quad
				u|_{\del P'} = 0.
			\end{align*}
			will vanish when $\delta < 0$.
			For $\delta \in (0, 1)$, $u$ must be of the form
			$$
			u =\begin{cases}
				\lambda + \mu \cdot e^{- \rho} &\text{if } B = \R^3, \\
				\lambda + \mu \cdot \rho &\text{otherwise},
			\end{cases}
			$$ where $\lambda, \mu \in \R$ are chosen such that $ u|_{\del P'_\infty} = 0$.
		\end{proposition}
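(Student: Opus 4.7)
The plan is to exploit $S^1$-invariance to reduce to the Laplace equation on the base $B$ and solve it by explicit separation of variables. Since $h_\epsilon \Delta_{g^{GH}_\epsilon} = \Delta^{B}$ on $S^1$-invariant functions, the hypothesis $\Delta_{g^{GH}_\epsilon} u = 0$ descends to $\Delta^B u = 0$ on $B' = P'_\infty/S^1$, with Dirichlet boundary condition on the inner cross-section $\del B' \simeq \Sigma$.

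For $\delta < 0$, the weighted norm forces $u \to 0$ at infinity (in the Sobolev case, Theorem \ref{thm:analysis-asymptotic:schauder-global-nash-moser} combined with the bounded geometry of $(P_\infty, g_{cf})$ promotes $L^2_\delta$-control to pointwise decay), and the maximum principle on $B'$ gives $u \equiv 0$. For $\delta \in (0,1)$, I would Fourier-expand $u$ in the angular directions of $B$: spherical harmonics $Y_l(\omega)$ when $B = \R^3$, characters $e^{im\theta}e^{int}$ when $B = \R^2\times S^1$, and characters $e^{ik\cdot t}$ on $T^2$ when $B = \R^+\times T^2$. For each non-trivial angular Fourier mode the radial ODE is two-dimensional, with one growing solution ($r^l$ with $l \ge 1$, $I_m(|n|r)$, or $e^{|k|r}$) and one decaying solution ($r^{-l-1}$, $r^{-|m|}$, $K_m(|n|r)$, or $e^{-|k|r}$). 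The growing branch is ruled out by the weight $e^{\delta\rho}$ with $\delta<1$ (respectively $\delta < \min_{k\neq 0}|k|$ in the $\R^+\times T^2$ case). The decaying branch is ruled out by the Dirichlet condition: restricted to $\{r=R_1\}$ each such solution is a non-zero multiple of its angular character on $\Sigma$, and by orthogonality of characters on the compact cross-section the coefficient must vanish. Only the trivial angular mode survives, whose radial ODE admits the two-parameter families $c_1 + c_2/r$, $c_1 + c_2\log r$, or $c_1 + c_2 r$; the Dirichlet condition at $r = R_1$ then cuts these down to the one-parameter families $\lambda + \mu e^{-\rho}$ or $\lambda + \mu\rho$ in the statement.

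The main obstacle is handling the decaying non-trivial Fourier modes: they do satisfy the weighted norm bound, so their exclusion relies entirely on Dirichlet BC combined with orthogonality of angular characters on $\Sigma$. The H\"older and Sobolev versions go through identically, since the cylindrical volume $\widetilde{\Vol}$ from Definition \ref{def:asymptotic-geometry:weighted-norm} makes each Fourier mode's weighted norm reduce to a one-variable weighted integral in $\rho$, and the exclusion criteria depend only on the asymptotic growth/decay rate of the radial solution, which is the same in both functional settings.
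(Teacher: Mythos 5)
Your proof is correct and is essentially the argument the paper intends (the paper offers no written proof beyond the remark that the $S^1$-invariant Laplace equation reduces to $\Delta^B$ and is solved explicitly by Fourier decomposition): reduction to the base, separation of variables, exclusion of growing modes by the weight and of decaying non-trivial angular modes by the Dirichlet condition plus orthogonality on $\Sigma$. Your caveat that in the $\R^+ \times T^2$ case one needs $\delta$ below the first non-zero indicial root $\min_{k \neq 0}|k|$ (not merely $\delta < 1$) is a genuine and correct refinement of the stated hypothesis, consistent with the paper's later use of ``$|\delta|$ sufficiently small.''
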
				
		\noindent
		When one uses Sobolev spaces, one can calculate the cokernel of an operator by studying the kernel of its formal adjoint. In the next proposition we make this precise. Combining this with our knowledge of the kernel from Proposition 	\ref{prop:analysis-asymptotic:laplace-equation-dirichlet} we will get an explicit description of the range.
		
		\begin{proposition}
			\label{prop:analysis-asymptotic:cokernel}
			Let $W^{2, 2}_{\delta, 0}(P'_\infty)$ be the space of all $W^{2,2}_{\delta}(P'_\infty)$ functions that satisfy $u|_{\del P'_\infty} = 0$. The formal adjoint of $L_\delta$ is $L_{-(\delta + 1)}$ when $B = \R^3$ and $L_{- \delta}$ else.
			Hence, under the conditions described in Theorem \ref{thm:analysis-asymptotic:special-perp-schauder-polynomial-case}, $f \in L^2_\delta(P'_\infty)$ lies in the image of
			$$
			\Omega^{-2} \Delta_{g_\epsilon} \colon W^{2,2}_{\delta, 0}(P'_\infty) \to L^{2}_{\delta}(P'_\infty)
			$$
			if and only if $\langle f, e^{\rho}\cdot v \rangle_{\tilde L^2(P'_\infty)} = 0$ (or $\langle f, v \rangle_{\tilde L^2(P'_\infty)} = 0$ when $B \not = \R^3$) for all 
			$$
			v \in \ker \Omega^{-2} \Delta_{g_\epsilon}\colon\begin{cases}
				W^{2,2}_{-(\delta + 1), 0}(P'_\infty) \to L^{2}_{-(\delta + 1)}(P'_\infty) &\text{if } B = \R^3 \\
				W^{2,2}_{-\delta, 0}(P'_\infty) \to L^{2}_{-\delta}(P'_\infty) &\text{otherwise}.
			\end{cases}
			$$
		\end{proposition}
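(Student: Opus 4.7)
The plan splits into three steps: a direct computation of the ratio between $\widetilde{\Vol}$ and $\Vol_{g_\epsilon}$ combined with $\Omega^{-2}$, an integration by parts that identifies the formal $\tilde L^2$-adjoint of $L_\delta$, and finally the Hilbert-space Fredholm alternative transported through the isometry $u\mapsto e^{\delta\rho}u$.

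\textbf{Step 1 (volume identity).} Writing $\widetilde{\Vol}=\phi\,\Vol_{g^{GH}_\epsilon}$, a direct calculation from Definitions \ref{def:asymptotic-geometry:metric-cf} and \ref{def:asymptotic-geometry:weighted-norm} yields
\begin{align*}
\Omega^{-2}\phi \;=\;
\begin{cases} \epsilon^{-1}e^{-\rho} & \text{if } B=\R^3,\\ \epsilon^{-1} & \text{otherwise.} \end{cases}
\end{align*}
For instance, when $B=\R^3$ one has $\Vol_{g^{GH}_\epsilon}=\epsilon h_\epsilon r^2\,\d r\wedge\Vol_{S^2}\wedge\eta$ while $\widetilde{\Vol}=r^{-1}\,\d r\wedge\Vol_{S^2}\wedge\eta$, so $\phi=(\epsilon h_\epsilon r^3)^{-1}$ cancels against $\Omega^{-2}=r^2 h_\epsilon$ up to the factor $e^{-\rho}/\epsilon$; the two other cases work identically. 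For the general metric, $\Vol_{g_\epsilon}=(1+O(r^{-\nu}))\Vol_{g^{GH}_\epsilon}$, and the lower-order corrections do not affect the identification of the adjoint because $\Delta_{g_\epsilon}$ remains formally self-adjoint with respect to its own volume.

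\textbf{Step 2 (formal adjoint).} Since $\Delta_{g_\epsilon}$ is self-adjoint with respect to $\Vol_{g_\epsilon}$ and boundary terms drop under Dirichlet data, Step~1 lets one compute
\begin{align*}
\langle L_\delta u,v\rangle_{\tilde L^2}
&=\int \Delta_{g_\epsilon}(e^{\delta\rho}u)\cdot e^{-\delta\rho}v\,(\Omega^{-2}\phi)\,\Vol_{g_\epsilon}\\
&=\int e^{\delta\rho}u\cdot \Delta_{g_\epsilon}\bigl(e^{-\delta\rho}v\,(\Omega^{-2}\phi)\bigr)\,\Vol_{g_\epsilon}.
\end{align*}
Inserting the formula for $\Omega^{-2}\phi$ and recollecting the factors of $e^{\pm\rho}$ yields $\langle u,L_{-(\delta+1)}v\rangle_{\tilde L^2}$ when $B=\R^3$ and $\langle u,L_{-\delta}v\rangle_{\tilde L^2}$ otherwise, which is the first assertion of the proposition.

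\textbf{Step 3 (Fredholm alternative).} The map $M_\delta u := e^{\delta\rho}u$ is an isometry $\tilde L^2\to L^2_\delta$ intertwining $L_\delta$ with $\Omega^{-2}\Delta_{g_\epsilon}$, so Corollary~\ref{cor:fredholmness} gives that $L_\delta\colon W^{2,2}_0(P'_\infty)\to \tilde L^2(P'_\infty)$ is Fredholm and has closed range equal to the $\tilde L^2$-orthogonal complement of the kernel of its formal adjoint. By Step~2 this kernel consists of $L^2$ distributional solutions of $L_{-(\delta+1)}w=0$ or $L_{-\delta}w=0$ with vanishing Dirichlet data; by the elliptic regularity of Theorem~\ref{thm:analysis-asymptotic:schauder-global-Sobolev} they automatically lie in $W^{2,2}_0$. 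Setting $w=e^{(\delta+1)\rho}v$ in the $B=\R^3$ case and pairing through $M_\delta^{-1}$ turns the condition $\langle M_\delta^{-1}f,w\rangle_{\tilde L^2}=0$ into
\begin{align*}
0=\int e^{-\delta\rho}f\cdot e^{(\delta+1)\rho}v\,\widetilde{\Vol}=\langle f,e^\rho v\rangle_{\tilde L^2},
\end{align*}
with $v\in\ker\bigl(\Omega^{-2}\Delta_{g_\epsilon}\colon W^{2,2}_{-(\delta+1),0}(P'_\infty)\to L^2_{-(\delta+1)}(P'_\infty)\bigr)$, as claimed; the case $B\neq\R^3$ is identical with $e^\rho$ replaced by $1$. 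The main obstacle is the careful bookkeeping in Steps~1--2 — the interplay between $\Omega$, $\widetilde{\Vol}$, $\Vol_{g_\epsilon}$ and the exponential weights — but once the clean formula for $\Omega^{-2}\phi$ is in hand the remainder is a standard application of the Hilbert-space Fredholm alternative.
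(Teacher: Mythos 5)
Your proposal is correct and follows exactly the route the paper takes (its proof is a one-line instruction to compute the formal adjoint from $\langle L_\delta u,v\rangle_{\tilde L^2}=\langle u,L_\delta^* v\rangle_{\tilde L^2}$ and then invoke the Fredholm alternative); your Steps 1--3 simply carry out that computation in detail, and the volume-ratio bookkeeping $\Omega^{-2}\phi=\epsilon^{-1}e^{-\rho}$ versus $\epsilon^{-1}$ checks out.
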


		\begin{proof}
		The formal adjoint can be explicitly be calculated by considering $\langle L_\delta u, v \rangle_{\tilde L^2(P'_\infty)} = \langle  u,L_\delta^* v \rangle_{\tilde L^2(P'_\infty)}$ for any pair of compactly supported smooth functions $u$ and $v$ on $P'_\infty$.
	\end{proof}

		\noindent
		When $B = \R^3$ the operator is injective for $\delta < 0$. According to Proposition \ref{prop:analysis-asymptotic:cokernel}, it must be surjective when $\delta > -1$. Hence it is an isomorphism for $\delta \in (-1,0)$. However, when $B \not = \R^3$ there is no $\delta \in \R$ such that $\Omega^{-2} \Delta_{g^{GH}_\epsilon}$ is injective and surjective at the same time. Hence we need to manually enlarge the domain without adding new elements to the kernel. We claim that when $B \not = \R^3$, $\delta  <0$ and $|\delta| \ll 1$ the operator 
		$$
		\Omega^{-2} \Delta_{g^{GH}_\epsilon} \colon W^{2,2}_\delta(P'_\infty) \oplus \R \rho \to L^2_\delta(P'_\infty)
		$$
		with Dirichlet boundary conditions is the isomorphism we are looking for. 
		\begin{proposition}
			\label{prop:analysis-asymptotic:laplace-bijectivity-dirichlet}
			Let $\delta \in (-1, 0)$ with $|\delta|$ sufficiently small.
			For any $f \in L^2_{\delta}(P'_\infty)$ there exists a unique $u \in W^{2, 2}_{\delta}(P'_\infty)$ or $u \in W^{2, 2}_{\delta}(P'_\infty) \oplus \R \rho$ such that 
			\begin{align*}
				\Omega^{-2} \Delta_{g^{GH}_\epsilon} u = f \quad \text{and} \quad
				u|_{\del P'_\infty} = 0 
			\end{align*}
			when $B = \R^3$ or $B \not= \R^3$ respectively.
		\end{proposition}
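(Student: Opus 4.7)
The plan is to combine the Fredholmness of $T := \Omega^{-2}\Delta_{g^{GH}_\epsilon} : W^{2,2}_{\delta, 0}(P'_\infty) \to L^2_\delta(P'_\infty)$ from Corollary \ref{cor:fredholmness} with the kernel and cokernel descriptions provided by Propositions \ref{prop:analysis-asymptotic:laplace-equation-dirichlet-perp-holder}, \ref{prop:analysis-asymptotic:laplace-equation-dirichlet} and \ref{prop:analysis-asymptotic:cokernel}. First I would show $\ker T = 0$ for $\delta < 0$ with $|\delta|$ small: splitting any $u \in \ker T$ as $u = u_b + u_f$ via Lemmas \ref{lem:analysis-asymptotic:laplacian-compatible-splitting}--\ref{lem:analysis-asymptotic:boundedness-av-operator} leaves two harmonic Dirichlet functions; Corollary \ref{cor:analysis-asymptotic:laplace-equation-dirichlet-perp-Schauder} forces $u_f = 0$ and Proposition \ref{prop:analysis-asymptotic:laplace-equation-dirichlet} at $\delta < 0$ forces $u_b = 0$.

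When $B = \R^3$, Proposition \ref{prop:analysis-asymptotic:cokernel} identifies $\operatorname{coker}(T)$ with $\ker T$ at weight $-(\delta+1) \in (-1, 0)$. The same splitting argument, which remains valid when $|\delta|$ is small enough that also $-(\delta+1)$ lies in the admissible range of Corollary \ref{cor:analysis-asymptotic:laplace-equation-dirichlet-perp-Schauder}, shows this kernel is trivial, so $T$ is an isomorphism. This settles the proposition in this case.

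When $B \neq \R^3$, the relevant adjoint weight is $-\delta \in (0,1)$. The non-invariant part of the kernel is still trivial for $|\delta|$ small, but Proposition \ref{prop:analysis-asymptotic:laplace-equation-dirichlet} supplies a one-dimensional $S^1$-invariant kernel spanned by the Dirichlet profile $\rho - \rho|_{\del P'_\infty}$. Hence $\operatorname{coker}(T)$ is one-dimensional and $\operatorname{ind}(T) = -1$. To restore bijectivity I would enlarge the domain by $\R\rho$. Concretely, let $\chi$ be a smooth cutoff equal to $1$ near $\del P'_\infty$ and compactly supported in a small neighborhood of the boundary, and set $\hat\rho := (1-\chi)\rho$. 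Then $\hat\rho$ equals $\rho$ at infinity, vanishes on $\del P'_\infty$, and fails to lie in $W^{2,2}_\delta$ for $\delta < 0$ (as $\rho$ or $\log r$ grows while the weight forces decay). The extended map
\[
T': W^{2,2}_{\delta, 0}(P'_\infty) \oplus \R\hat\rho \longrightarrow L^2_\delta(P'_\infty),\qquad (v, c) \mapsto \Omega^{-2}\Delta_{g^{GH}_\epsilon}(v + c\hat\rho),
\]
automatically enforces the Dirichlet condition on $v + c\hat\rho$, agrees with $T$ on its codimension-one subspace, and is thus Fredholm with $\operatorname{ind}(T') = \operatorname{ind}(T) + 1 = 0$.

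It then suffices to verify that $T'$ is injective. If $u = v + c\hat\rho \in \ker T'$, then $u$ is harmonic Dirichlet and, since $\hat\rho$ grows only sub-exponentially, lies in $W^{2,2}_{\delta'}$ for any small $\delta' > 0$. Proposition \ref{prop:analysis-asymptotic:laplace-equation-dirichlet} thus forces $u = \mu(\rho - \rho|_{\del P'_\infty})$ for some $\mu \in \R$. Near infinity, where $\hat\rho = \rho$, the decomposition reads $v = (\mu - c)\rho - \mu \cdot \rho|_{\del P'_\infty}$; both the leading linear and constant pieces must vanish for $v$ to lie in $W^{2,2}_\delta$ with $\delta < 0$, giving $\mu = c$ and $\mu\cdot \rho|_{\del P'_\infty} = 0$. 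Arranging $R_1$ so that $\rho|_{\del P'_\infty} \neq 0$ (a generic, non-restrictive choice) yields $\mu = c = 0$, hence $v = 0$. The main obstacle is exactly this asymptotic matching step: the cutoff definition of $\hat\rho$ is what makes the Dirichlet condition compatible with the direct sum, and the decay-versus-growth dichotomy between $W^{2,2}_\delta$ and $\R\rho$ is what pins down both $c$ and the leading behaviour of $v$ simultaneously. Uniqueness in the statement of the proposition is just injectivity of $T'$.
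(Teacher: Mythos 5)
Your argument is correct and rests on the same ingredients as the paper's proof --- the kernel computations of Corollary \ref{cor:analysis-asymptotic:laplace-equation-dirichlet-perp-Schauder} and Proposition \ref{prop:analysis-asymptotic:laplace-equation-dirichlet}, the formal-adjoint description of the cokernel in Proposition \ref{prop:analysis-asymptotic:cokernel}, and enlargement of the domain by $\R\rho$ --- but you close the surjectivity step by a different mechanism. The paper constructs a preimage explicitly: given $f$ it subtracts $\beta\,\Omega^{-2}\Delta_{g^{GH}_\epsilon}(\chi\rho)$ to make the right-hand side orthogonal to the adjoint kernel $\alpha+\rho$, solves in $W^{2,2}_{\delta,0}$ via Proposition \ref{prop:analysis-asymptotic:cokernel}, and adds back $\beta\rho$ to restore the boundary condition; this hinges on the assumed (and unverified) non-degeneracy $\langle\Omega^{-2}\Delta_{g^{GH}_\epsilon}(\chi\rho),\alpha+\rho\rangle\neq 0$. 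You instead count: $\operatorname{ind}(T)=-1$ because the adjoint kernel at weight $-\delta\in(0,1)$ is exactly one-dimensional (this uses that $\rho$ is genuinely harmonic for $B\neq\R^3$, so the necessary form in Proposition \ref{prop:analysis-asymptotic:laplace-equation-dirichlet} is also sufficient), the one-dimensional extension by $\R\hat\rho$ raises the index to $0$, and injectivity of $T'$ then forces bijectivity. This avoids the non-degeneracy assumption altogether, at the cost of being non-constructive; your reformulation with $\hat\rho=(1-\chi)\rho$ also makes the Dirichlet condition compatible with the direct sum more transparently than the paper's. Two small points: in the injectivity step for $T'$ you should again invoke Corollary \ref{cor:analysis-asymptotic:laplace-equation-dirichlet-perp-Schauder} to kill the $S^1$ non-invariant part of $v$ before applying the $S^1$-invariant classification, exactly as you did for $\ker T$; and the condition $\rho|_{\del P'_\infty}\neq 0$ you flag is indeed needed --- the paper uses it silently in ``the boundary condition forces $\beta=0$'' --- and holds because $R_1$ is large.
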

		\begin{proof}
			We only prove the case $B \not = \R^3$. Let $u + \lambda \rho \in W^{2, 2}_{\delta}(P'_\infty) \oplus \R \rho$ such that $\Delta (u + \lambda \rho) = 0$ and $u + \lambda \rho|_{\del P'} = 0$. Because $W^{2, 2}_{\delta}(P'_\infty) \oplus \R \rho \subset W^{2, 2}_{-\delta}(P'_\infty)$, there exist $\alpha, \beta \in \R$ such that 
			\begin{equation*}
				u + \lambda \rho = \alpha + \beta \rho,
			\end{equation*}
			Comparing decay rates, we conclude $\lambda = \beta$ and $u = \alpha$.
			The only constant function that is part of $W^{2, 2}_\delta(P'_{\infty})$ is the constant zero function and therefore $\alpha = 0$. The boundary condition forces $\beta = 0$. This proves the injectivity of $\Omega^{-2} \Delta_{g^{GH}_\epsilon}$.
			\\
			
			\noindent
			To show surjectivity we first set up some notation: Let $\alpha \in \R$ be such that $\alpha + \rho$ vanishes on the boundary of $P'_\infty$. Let $\chi$ be a smooth bump function on $P'_\infty$ such that $\chi|_{\del P'_\infty} = 1$ and assume that $\langle \Omega^{-2}\Delta_{g^{GH}_\epsilon}(\chi \rho) , \alpha + \rho \rangle \not= 0$. 
			\\
			
			\noindent
			Let $f \in L^2_{\delta}(P'_\infty)$ and choose $\beta \in \R$ such that $\langle f + \beta \cdot \Omega^{-2}\Delta_{g^{GH}_\epsilon}(\chi \rho), \alpha + \rho \rangle = 0$. By Proposition \ref{prop:analysis-asymptotic:cokernel}, there exists some $\hat u \in W^{2,2}_{\delta,0}(P'_\infty)$ such that 
			$$
			\Omega^{-2} \Delta_{g^{GH}_\epsilon} (\hat u) = f + \beta \cdot \Omega^{-2}\Delta_{g^{GH}_\epsilon}(\chi \rho),
			$$
			because $a + \rho$ spans the kernel of $\Omega^{-2} \Delta_{g^{GH}_\epsilon} \colon W^{2,2}_{-\delta}(P'_\infty) \to L^2_{- \delta}(P'_\infty)$. By construction $u := \hat u - \beta \cdot \chi \rho$ is an element of $W^{2,2}_\delta(P'_\infty)$ and solves $\Omega^{-2}\Delta_{g^{GH}_\epsilon} u = f$.
			To satisfy the boundary condition we consider $u + \beta \cdot \rho$ instead. Because $\rho$ is harmonic, $u + \beta \cdot \rho$ is still a solution for $f$ and,
			$$
			u + \beta \rho |_{\del P'_{\infty}} = \hat{u} + \beta(1 - \chi) \rho |_{\del P'_{\infty}} = 0.
			$$
			This proves surjectivity.
		\end{proof}
		\begin{corollary}
		\label{cor:analysis-asymptotic:laplace-bijectivity-dirichlet}
		Let $\delta \in (-1, 0)$ with $|\delta|$ sufficiently small.
		One can define $P'_\infty$ such that for any $f \in L^2_{\delta}(P'_\infty)$ there exists a unique $u \in W^{2, 2}_{\delta}(P'_\infty)$ or $u \in W^{2, 2}_{\delta}(P'_\infty) \oplus \R \rho$ such that 
		\begin{align*}
			\Omega^{-2} \Delta_{g_\epsilon} u = f \quad \text{and} \quad
			u|_{\del P'_\infty} = 0 
		\end{align*}
		when $B = \R^3$ or $B \not= \R^3$ respectively.
	\end{corollary}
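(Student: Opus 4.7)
[Proof proposal for Corollary \ref{cor:analysis-asymptotic:laplace-bijectivity-dirichlet}]
The plan is to obtain the result as a perturbation of Proposition \ref{prop:analysis-asymptotic:laplace-bijectivity-dirichlet}, exploiting that the difference $\Delta_{g_\epsilon}-\Delta_{g^{GH}_\epsilon}$ has coefficients that decay at infinity uniformly in $\epsilon$, exactly as was already used at the end of the proof of Theorem \ref{thm:analysis-asymptotic:special-perp-schauder-polynomial-case}.

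First, for $\delta\in(-1,0)$ with $|\delta|$ small, denote by $X_\delta$ the Banach space $W^{2,2}_{\delta,0}(P'_\infty)$ when $B=\R^3$ and $W^{2,2}_{\delta,0}(P'_\infty)\oplus\R\rho$ otherwise (with Dirichlet condition interpreted as $u+\lambda\rho$ vanishing on $\del P'_\infty$). Proposition \ref{prop:analysis-asymptotic:laplace-bijectivity-dirichlet} gives that the model operator $T_0:=\Omega^{-2}\Delta_{g^{GH}_\epsilon}\colon X_\delta\to L^2_\delta(P'_\infty)$ is an isomorphism. The key point is that the bound on $T_0^{-1}$ is uniform in $\epsilon$: indeed the proof of the proposition relies only on the ingredients (cokernel via the formal adjoint, explicit kernel, Fredholmness from Corollary \ref{cor:fredholmness}) whose constants are already uniform in $\epsilon$. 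So there exists $K>0$ independent of $\epsilon$ with $\|T_0^{-1}\|_{\mathrm{op}}\le K$.

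Next, I would write $T_\epsilon:=\Omega^{-2}\Delta_{g_\epsilon}=T_0+E_\epsilon$ where $E_\epsilon=\Omega^{-2}(\Delta_{g_\epsilon}-\Delta_{g^{GH}_\epsilon})$. Because $|\nabla^k(g_\epsilon-g^{GH}_\epsilon)|_{cf}\to 0$ as $\rho\to\infty$ uniformly in $\epsilon$, the operator norm of $E_\epsilon\colon X_\delta\to L^2_\delta(P'_\infty)$ can be made arbitrarily small by translating the region $P'_\infty$ further out along the end, again uniformly in $\epsilon$. In particular I would shrink $P'_\infty$ so that $\|E_\epsilon\|_{\mathrm{op}}<\tfrac{1}{2K}$. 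A Neumann series argument then yields that $T_\epsilon=T_0(\mathrm{Id}+T_0^{-1}E_\epsilon)$ is invertible on $X_\delta$, with inverse bounded uniformly in $\epsilon$.

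Two small caveats to check. First, translating $P'_\infty$ changes the boundary, hence changes the constants $\alpha,\beta$ appearing in the definition of $X_\delta$ (the element $\alpha+\rho$ spanning the enlarged kernel), but Proposition \ref{prop:analysis-asymptotic:laplace-bijectivity-dirichlet} applies verbatim to any such shifted end, so $K$ can be chosen uniformly. Second, one must verify that the Dirichlet condition is respected by the perturbation argument; this is automatic since both $T_0$ and $E_\epsilon$ act within $X_\delta$, which bakes in the boundary condition. The main obstacle I anticipate is making the uniformity of $\|T_0^{-1}\|_{\mathrm{op}}$ in $\epsilon$ rigorous: one has to revisit the surjectivity argument of Proposition \ref{prop:analysis-asymptotic:laplace-bijectivity-dirichlet} and check that the constant $\beta$ chosen there, together with the bound $|\langle\Omega^{-2}\Delta_{g^{GH}_\epsilon}(\chi\rho),\alpha+\rho\rangle|$ appearing in the denominator, is bounded away from zero independently of $\epsilon$. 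This should follow from an explicit computation since $\chi\rho$ is $S^1$-invariant and on such functions $h_\epsilon\Delta_{g^{GH}_\epsilon}=\Delta^B$, reducing the pairing to a calculation on the flat base $B$, whose value is manifestly $\epsilon$-independent.
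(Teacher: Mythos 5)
Your proposal is correct and follows essentially the same route as the paper: the paper's proof is precisely that invertibility is an open condition and that the operator norm of $\Omega^{-2}(\Delta_{g_\epsilon}-\Delta_{g^{GH}_\epsilon})$ can be made arbitrarily small by shifting the domain $P'_\infty$ further out along the end. Your Neumann-series elaboration and the caveats about uniformity in $\epsilon$ are just a more explicit rendering of that one-line argument.
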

	\begin{proof}
		This follows directly from Proposition \ref{prop:analysis-asymptotic:laplace-bijectivity-dirichlet}, because invertibility is an open condition and the operator norm of $ \Omega^{-2} (\Delta_{g_\epsilon} - \Delta_{g^{GH}_\epsilon})$ can be made arbitrary small by changing the domain of $P'_\infty$. 
		%			This follows from the estimates between $g_\epsilon$ and $g^{GH}_\epsilon$.
	\end{proof}
\subsection{Invertibility in Sobolev spaces}
\label{sec:Fredholm-global}
Having determined the (co)-kernel of the Laplacian on $P_\infty$, we finally are able to study the Laplacian on $M_\epsilon$.  Because $M_\epsilon$ is the union of the model end $P_\infty$ and some compact set, elliptic regularity and Fredholm results can be extended without any proof. We only need to study the kernel and range of $\Omega^{-2} \Delta_{g_\epsilon}$.
\\

\noindent
When $\delta < 0$, functions inside $W^{2,2}_\delta(M_\epsilon)$ are forced to decay at infinity. Hence when $\delta < 0$, the kernel of $\Omega^{-2}\Delta_g$ is zero due to the maximum principle. In Proposition \ref{prop:analysis-asymptotic:cokernel} it is shown that the formal adjoint of $L_\delta$ near infinity is $L_{-(1+\delta)}$ when $B = \R^3$ or $L_{- \delta}$ else. From this we can directly conclude for which $\delta$ the co-kernel is empty:
\begin{lemma}
	\label{lem:global-analysis:injectivity-delta-below-zero}
	When $B = \R^3$, the operator $\Omega^{-2} \Delta_{g_\epsilon} \colon W^{2,2}_\delta(M_{\epsilon}) \to L^2_\delta(M_{\epsilon})$ is an isomorphism for $\delta \in (-1,0)$.
	When $B \not = \R^3$, the operator $\Omega^{-2} \Delta_{g_\epsilon} \colon W^{2,2}_\delta(M_{B,n}) \to L^2_\delta(M_{B,n})$ is injective when $\delta <0$ and surjective when $\delta > 0$.
\end{lemma}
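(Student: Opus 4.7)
The plan is to combine the Fredholm property of $\Omega^{-2}\Delta_{g_\epsilon}$ on $M_\epsilon$, which extends from Corollary \ref{cor:fredholmness} by the standard argument noted in the paragraph preceding the lemma (since $M_\epsilon \setminus P'_\infty$ is compact), with a maximum-principle argument applied separately to the operator and to its formal adjoint at the dual weight. Throughout, $\Omega > 0$ implies $\ker \Omega^{-2}\Delta_{g_\epsilon} = \ker \Delta_{g_\epsilon}$, so only harmonicity matters for the kernel computations.

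First I would establish injectivity for $\delta < 0$. Suppose $u \in W^{2,2}_\delta(M_\epsilon)$ satisfies $\Delta_{g_\epsilon} u = 0$. The global Nash--Moser estimate (Theorem \ref{thm:analysis-asymptotic:schauder-global-nash-moser}, extended to $M_\epsilon$) upgrades $u$ to $C^{0,\alpha}_\delta(M_\epsilon)$, so $e^{-\delta\rho}|u|$ is uniformly bounded and hence $|u(p)| \to 0$ as $\rho(p) \to \infty$. Exhausting $M_\epsilon$ by the relatively compact sublevel sets $\{\rho \le n\}$ and applying the strong maximum principle to $u$ on each, the boundary suprema tend to zero, forcing $u \equiv 0$. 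This argument is independent of the model end, so it gives injectivity for all $\delta < 0$ in both the $B = \R^3$ and $B \neq \R^3$ cases.

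For surjectivity I would invoke the Fredholm alternative together with a global analogue of Proposition \ref{prop:analysis-asymptotic:cokernel}. Because $M_\epsilon$ has no boundary, integrating by parts against a test function with respect to $\widetilde{\Vol}$ produces no boundary contribution, and the same conjugation computation that identified the formal adjoint in the asymptotic setting shows that the image of $\Omega^{-2}\Delta_{g_\epsilon}\colon W^{2,2}_\delta(M_\epsilon) \to L^2_\delta(M_\epsilon)$ is the annihilator of $\ker \Omega^{-2}\Delta_{g_\epsilon}$ acting on the dual weighted space, where the dual weight is $-(1+\delta)$ when $B = \R^3$ and $-\delta$ otherwise. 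Applying the injectivity argument of the previous paragraph to this adjoint kernel shows it is trivial precisely when the dual weight is negative, i.e.\ when $\delta > -1$ (case $B = \R^3$) or $\delta > 0$ (case $B \neq \R^3$).

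Assembling the two pieces yields exactly the stated conclusion: for $B = \R^3$ the operator is both injective and surjective on $\delta \in (-1,0)$, hence an isomorphism; for $B \neq \R^3$ it is injective for $\delta < 0$ and surjective for $\delta > 0$. I expect the main obstacle to be the careful bookkeeping of the volume-form factor $\widetilde{\Vol}$ across the three geometric cases when identifying the formal adjoint globally, so that the shift $\delta \mapsto -(1+\delta)$ versus $\delta \mapsto -\delta$ is justified rigorously on $M_\epsilon$ rather than inherited by analogy from the Dirichlet computation on $P'_\infty$; once that is in place, the maximum-principle step is routine.
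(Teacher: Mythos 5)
Your proposal is correct and follows essentially the same route as the paper: injectivity for $\delta<0$ via decay plus the maximum principle, and surjectivity via Fredholmness together with the identification of the cokernel with the kernel of the formal adjoint at the dual weight $-(1+\delta)$ (for $B=\R^3$) or $-\delta$ (otherwise) from Proposition \ref{prop:analysis-asymptotic:cokernel}. The paper only sketches this in the paragraph preceding the lemma; your write-up fills in the same steps in more detail.
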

\noindent We can't improve Lemma \ref{lem:global-analysis:injectivity-delta-below-zero}, because there will be an index jump at $\delta = 0$ due to the constant functions. In summary, according to Lemma \ref{lem:global-analysis:injectivity-delta-below-zero}, there always exists an inverse, but this inverse might have the wrong decay rate. As shown in Proposition \ref{prop:analysis-asymptotic:laplace-bijectivity-dirichlet}, we can remedy this by adding a certain smooth function $\phi$ to our domain. We require $\phi$ to be $\rho$ near infinity and we want that $\phi$ vanishes inside.
\begin{proposition}
	\label{prop:global-analysis:laplace-surjectivive-not-R3}
	Assume that $B \not= \R^3$ and assume that $g_\epsilon = g^{GH}_\epsilon$ on $P'_\infty$.
	Let $\delta <0 $ with $|\delta|$ sufficiently small and $k \in \N_{\ge 2}$.
	For any $f \in W^{k-2, 2}_{\delta}(M_{\epsilon})$ there exists a $u \in W^{k, 2}_{\delta}(M_{\epsilon})\oplus \R \phi$ such that 
	\begin{align*}
		\Omega^{-2} \Delta_{g_\epsilon} u =& f.
	\end{align*}
\end{proposition}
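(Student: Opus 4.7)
The plan is to solve the equation first at a positive weight, where Lemma~\ref{lem:global-analysis:injectivity-delta-below-zero} gives surjectivity, and then improve the asymptotic behavior of the solution by isolating the indicial contributions coming from the collapsing end. Via the global elliptic regularity of Theorem~\ref{thm:analysis-asymptotic:schauder-global-Sobolev}, it suffices to treat $k=2$: given $f \in L^{2}_{\delta}(M_\epsilon)$, produce $u \in W^{2,2}_{\delta}(M_\epsilon) \oplus \R\phi$ with $\Omega^{-2}\Delta_{g_\epsilon} u = f$. Fix $\delta' \in (0,|\delta|)$. Since $L^{2}_{\delta} \hookrightarrow L^{2}_{\delta'}$ and, by Lemma~\ref{lem:global-analysis:injectivity-delta-below-zero}, $\Omega^{-2}\Delta_{g_\epsilon}\colon W^{2,2}_{\delta'}(M_\epsilon) \to L^{2}_{\delta'}(M_\epsilon)$ is surjective with kernel $\R\cdot 1$, we obtain some $v \in W^{2,2}_{\delta'}(M_\epsilon)$ with $\Omega^{-2}\Delta_{g_\epsilon} v = f$, unique up to an additive constant.

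The heart of the proof is the asymptotic decomposition of $v$ on $P'_\infty$ via the $S^{1}$-Fourier projection of Definition~\ref{def:analysis-asymptotic:S1-invarariant-functions}: write $v=v_b+v_f$, which commutes with $\Delta_{g^{GH}_\epsilon}$ by Lemma~\ref{lem:analysis-asymptotic:laplacian-compatible-splitting}. For the $S^{1}$-non-invariant part, the Poincar\'e inequality (Proposition~\ref{prop:analysis-asymptotic:poincare-inequality}) that underlies Theorem~\ref{thm:analysis-asymptotic:special-perp-schauder-polynomial-case} rules out indicial roots in a neighborhood of zero, so a bootstrap from $W^{2,2}_{\delta'}$ upgrades $v_f$ to $W^{2,2}_{\delta}(P_\infty)$. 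For the $S^{1}$-invariant part, the identity $h_\epsilon \Delta_{g^{GH}_\epsilon} = \Delta^{B}$ reduces the equation to $\Delta^{B} v_b = h_\epsilon f_b$ on $B'$, and the standard weighted Fredholm analysis on the cylinder-like base $B$ (the same ingredients already invoked in Lemma~\ref{lem:global-analysis:injectivity-delta-below-zero} and Corollary~\ref{cor:analysis-asymptotic:laplace-bijectivity-dirichlet}) yields an expansion
$$
v_b = c_1 + c_2\,\rho + w_b, \qquad c_1,c_2 \in \R,\ w_b \in W^{2,2}_{\delta}(P_\infty),
$$
reflecting the fact that the indicial root at $0$ has multiplicity two, spanned by $1$ and $\rho$.

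Now I use the freedom in the choice of $v$ to kill the constant $c_1$: replacing $v$ by $v-c_1$ preserves the equation since constants belong to the kernel at weight $\delta'$. Setting $u := v - c_2\phi$, we see that on the compact interior $\phi$ vanishes, so $u$ inherits the local $W^{2,2}$-regularity of $v$; on $P_\infty$ we have $u = w_b + v_f \in W^{2,2}_{\delta}(P_\infty)$. Hence $u \in W^{2,2}_{\delta}(M_\epsilon)$ and $u + c_2\phi = v - c_1$ solves $\Omega^{-2}\Delta_{g_\epsilon}(u + c_2\phi) = f$, as required. The $W^{k,2}_\delta$-regularity for $k>2$ then follows from Theorem~\ref{thm:analysis-asymptotic:schauder-global-Sobolev}.

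The principal technical obstacle is producing the asymptotic expansion $v_b = c_1 + c_2\rho + w_b$ with remainder genuinely in the weighted Sobolev space. A clean way is to apply Corollary~\ref{cor:analysis-asymptotic:laplace-bijectivity-dirichlet} to the restriction $f|_{P'_\infty}$ to obtain a Dirichlet solution $u_\infty = \tilde u_\infty + \lambda\rho \in W^{2,2}_{\delta}(P'_\infty)\oplus\R\rho$, and then to classify the harmonic difference $v|_{P'_\infty} - u_\infty \in W^{2,2}_{\delta'}(P'_\infty)$ using Proposition~\ref{prop:analysis-asymptotic:laplace-equation-dirichlet} together with Corollary~\ref{cor:analysis-asymptotic:laplace-equation-dirichlet-perp-Schauder} for the $S^{1}$-non-invariant uniqueness; the constants $c_1, c_2$ then arise as the boundary values of this difference and must be shown to patch consistently with the interior Sobolev behavior of $v$.
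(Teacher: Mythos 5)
Your global skeleton matches the paper's: reduce to $k=2$, solve at a positive weight using Lemma~\ref{lem:global-analysis:injectivity-delta-below-zero}, and then improve the decay of the solution on the end by comparing it with the Dirichlet solution produced by Corollary~\ref{cor:analysis-asymptotic:laplace-bijectivity-dirichlet}. But the comparison step, which you yourself single out as the principal obstacle, has a genuine gap. The difference $v|_{P'_\infty} - u_\infty$ is harmonic on $P'_\infty$, but its boundary value is $v|_{\del P'_\infty}$, which is not zero; Proposition~\ref{prop:analysis-asymptotic:laplace-equation-dirichlet} and Corollary~\ref{cor:analysis-asymptotic:laplace-equation-dirichlet-perp-Schauder} classify harmonic functions \emph{with Dirichlet boundary conditions} only, so you cannot conclude $v - u_\infty = c_1 + c_2\rho + w_b$ from them. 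Without a boundary condition the space of decaying harmonic functions on $P'_\infty$ is infinite-dimensional, and the paper has no general indicial-root expansion theorem you could substitute; likewise your claim that the Poincar\'e inequality lets you ``bootstrap'' $v_f$ from $W^{2,2}_{\delta'}$ to $W^{2,2}_{\delta}$ uses Theorem~\ref{thm:analysis-asymptotic:special-perp-schauder-polynomial-case} as if it were a decay-improvement result, whereas it is an a priori estimate valid only for functions already known to lie in $W^{2,2}_{\delta}$.

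The paper closes exactly this gap with a cutoff correction: take $\chi$ equal to $1$ on $\del P'_\infty$ and supported near it, and solve the Dirichlet problem on $P'_\infty$ for the modified right-hand side $f - \Omega^{-2}\Delta_{g^{GH}_\epsilon}(\chi v)$, obtaining $u_\infty + \lambda\phi$ vanishing on $\del P'_\infty$. Then $u_\infty + \chi v + \lambda\phi$ solves the same equation as $v$ on $P'_\infty$ \emph{and} has the same boundary value $v|_{\del P'_\infty}$, so the difference is harmonic with genuine Dirichlet data and the classification applies, giving $u_\infty + \chi v + \lambda\phi - v = \alpha + \beta\rho$; from this one reads off $v + \alpha \in W^{2,2}_\delta(M_\epsilon) \oplus \R\phi$. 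If you insert this correction term your argument becomes the paper's proof; as written, the identification of the constants $c_1, c_2$ ``as boundary values that must patch consistently'' is precisely the step that is missing.
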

\begin{proof}
	By elliptic regularity it is sufficient to show that $\Omega^{-2} \Delta_{g_\epsilon} \colon W^{2,2}_\delta(M_{\epsilon}) \oplus \R \phi \to L^2_\delta(M_{\epsilon})$ is surjective. Let $f \in L^2_\delta(M_{\epsilon})$. By Lemma \ref{lem:global-analysis:injectivity-delta-below-zero} there exists a $u \in W^{2,2}_{- \delta}(M_{\epsilon})$ such that 
	$$
	\Omega^{-2} \Delta u = f.
	$$
	Our goal is to show that $u \in W^{2,2}_\delta(M_{\epsilon}) \oplus \R \phi$.
	Let $\chi$ be a small bump function on $M_{\epsilon}$ that equals 1 on $\del P'_\infty$. By Corollary \ref{cor:analysis-asymptotic:laplace-bijectivity-dirichlet} there exist a function $u_\infty \in W^{2,2}_\delta(P'_\infty)$ and $\lambda \in \R$ such that
	\begin{align*}
		\Omega^{-2} \Delta_{g^{GH}_\epsilon}(u_\infty + \lambda \phi) =& f - \Omega^{-2} \Delta_{g^{GH}_\epsilon}(\chi u), \\
		(u_\infty + \lambda\phi)|_{\del P'_\infty} =& 0.
	\end{align*}
	The term $\Omega^{-2} \Delta_{g^{GH}_\epsilon}(\chi u) $ is added, because it induces the conditions
	\begin{align*}
		\Omega^{-2} \Delta_{g^{GH}_\epsilon}(u_\infty + \chi u + \lambda \phi) =& f, \\
		(u_\infty + \chi u + \lambda\phi)|_{\del P'_\infty} =& u.
	\end{align*}
	At the same time, the restriction of $u$ to the region $P'_\infty$ also satisfies
	\begin{align*}
		\Omega^{-2} \Delta_{g^{GH}_\epsilon}(u) =& f \\
		u |_{\del P'_\infty} =& u,
	\end{align*}
	and hence $u_\infty + \chi u + \lambda \phi - u$ is a harmonic function on $P'_\infty$ with Dirichlet boundary conditions. Because $W^{2,2}_\delta \oplus \R \phi$ is a subset of $W^{2,2}_{- \delta}$, and the harmonics of $W^{2,2}_{- \delta}(P'_\infty)$ are known by Proposition \ref{prop:analysis-asymptotic:laplace-equation-dirichlet},
	$$
	u_\infty + \chi u + \lambda \phi - u = \alpha + \beta \rho
	$$ 
	for some $\alpha, \beta \in \R$.
	From this we make two observations: First, $u + \alpha + (\beta - \lambda) \phi$ is an element of $W^{2,2}_{-\delta}(M_{\epsilon})$, and secondly, it is also equal to $u_\infty + \chi u \in W^{2,2}_{\delta}(P'_\infty)$. Because $M_{\epsilon}$ is the union of a compact set with $P'_\infty$ and all weighted $W^{2,2}$ norms on compact sets are equivalent,
	$$
	u + \alpha + (\beta - \lambda) \phi \in W^{2,2}_{\delta}(M_{\epsilon}).
	$$
	We conclude $u + \alpha \in W^{2,2}_\delta(M_{\epsilon}) \oplus \R \phi$ and $\Omega^{-2} \Delta_{g_\epsilon}(u + \alpha) = f$, which proves surjectivity.
\end{proof}
\begin{proposition}
	\label{prop:global-analysis:laplace-injective-not-R3}
	Assume that $B \not= \R^3$ and assume that $g_\epsilon = g^{GH}_\epsilon$ on $P'_\infty$.
	Let $\delta \in (-1, 0)$ with $|\delta|$ sufficiently small and $k \in \N_{\ge 2}$. The operator 
	$$\Omega^{-2} \Delta_{g_\epsilon} \colon W^{k, 2}_{\delta}(M_{\epsilon}) \oplus \R \phi \to L^2_\delta(M_{\epsilon})$$
	has a trivial kernel.
\end{proposition}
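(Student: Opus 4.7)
The strategy is to combine the maximum principle with the injectivity statement of Lemma~\ref{lem:global-analysis:injectivity-delta-below-zero}. The key point is that since $\delta<0$, the $W^{k,2}_\delta(M_\epsilon)$-component of any element of the augmented domain $W^{k,2}_\delta(M_\epsilon)\oplus\R\phi$ must decay at infinity, so the only element of the kernel that is not already in $W^{k,2}_\delta(M_\epsilon)$ must carry a nonzero $\phi$-coefficient, i.e.\ grow like $\rho$ at infinity. I will rule out this possibility by a maximum principle argument and then invoke Lemma~\ref{lem:global-analysis:injectivity-delta-below-zero} to conclude.

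Concretely, let $v = u + \lambda\phi$ with $u \in W^{k,2}_\delta(M_\epsilon)$ and $\lambda\in\R$ satisfy $\Omega^{-2}\Delta_{g_\epsilon} v = 0$. Since $\Omega>0$, $v$ is harmonic on $M_\epsilon$, and elliptic regularity (together with the bounded-geometry setup of Section~\ref{subset:analysis-asymptotic:bounded-geometry}) gives smoothness of $v$, and hence of $u = v - \lambda\phi$. For each non-$\R^3$ base we have $\rho\to+\infty$ at infinity of $M_\epsilon$, and the weight $\delta<0$ together with an elliptic bootstrap from $\Omega^{-2}\Delta_{g_\epsilon}u = -\lambda\,\Omega^{-2}\Delta_{g_\epsilon}\phi$ (whose right-hand side is compactly supported) forces $u\to 0$ at infinity in the $C^0$-sense; thus $v\sim \lambda\rho$ asymptotically.

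If $\lambda > 0$, then $v\to +\infty$ at infinity of $M_\epsilon$, so every sublevel set $\{v\le N\}$ is compact in the complete manifold $M_\epsilon$, and $v$ attains its infimum at an interior point. Applying the strong maximum principle to the subharmonic function $-v$ then forces $v$ to be constant, contradicting $v\to+\infty$. Replacing $v$ by $-v$ rules out $\lambda<0$, so $\lambda = 0$. Consequently $v = u \in W^{k,2}_\delta(M_\epsilon)$ is harmonic, and Lemma~\ref{lem:global-analysis:injectivity-delta-below-zero} (injectivity for $\delta<0$) yields $u = 0$, hence $v = 0$.

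The only step that warrants any real care is verifying that the strong maximum principle may be applied on the complete non-compact manifold $M_\epsilon$ to $v$, which follows at once from the precompactness of its sublevel sets once $\lambda\neq 0$; apart from this, the argument uses only standard elliptic regularity plus the already-established Lemma~\ref{lem:global-analysis:injectivity-delta-below-zero}, so in particular no further asymptotic analysis of the type carried out on $P'_\infty$ in Section~\ref{subset:analysis-asymptotic:fredholm-theory} is needed here.
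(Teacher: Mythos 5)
Your proof is correct, but it takes a genuinely different route from the paper's. The paper argues by duality: assuming a nontrivial kernel element with $\phi$-coefficient normalised to $1$, it uses the surjectivity statement of Proposition \ref{prop:global-analysis:laplace-surjectivive-not-R3} to show that the \emph{unaugmented} operator $\Omega^{-2}\Delta_{g_\epsilon}\colon W^{2,2}_{\delta}(M_\epsilon)\to L^2_{\delta}(M_\epsilon)$ would then already be surjective (one replaces the $\phi$-part of any preimage by a multiple of the hypothetical kernel element), whence its formal adjoint $L_{-\delta}$ would be injective --- contradicting the fact that for $\delta<0$ the constants are harmonic and lie in the $(-\delta)$-weighted space. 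You instead kill the $\phi$-coefficient directly: a kernel element with $\lambda\neq 0$ is a harmonic function asymptotic to $\lambda\rho$, hence proper and bounded below (or above), so it attains an interior extremum and the strong maximum principle forces it to be constant. The one step that genuinely needs justification in your version is the pointwise decay of the $W^{k,2}_{\delta}$-component ($W^{2,2}$ does not embed in $C^0$ in dimension four), and you supply it correctly: $\Omega^{-2}\Delta_{g_\epsilon}\phi$ is compactly supported because $\rho$ is harmonic at infinity for $B\neq\R^3$, so Theorem \ref{thm:analysis-asymptotic:schauder-global-nash-moser} upgrades $L^2_{\delta}$-boundedness to $C^0_{\delta}$ decay. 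What your approach buys is independence from Proposition \ref{prop:global-analysis:laplace-surjectivive-not-R3} and from the adjoint computation of Proposition \ref{prop:analysis-asymptotic:cokernel}, and it is arguably more geometric; what the paper's approach buys is that it is purely functional-analytic and would carry over to operators or systems for which no maximum principle is available. Both arguments are complete.
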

\begin{proof}
	\noindent
	Assume the contrary, and let $v$ be a non-zero element of $ W^{2,2}_\delta(M_{\epsilon})$ and $\lambda \in \R$ such that $\Delta(v + \lambda \phi) = 0$. If $\lambda = 0$, Lemma \ref{lem:global-analysis:injectivity-delta-below-zero} implies $v = 0$ which contradicts our assumption. Therefore, we can rescale our harmonic function such that $\lambda = 1$. \\
	
	\noindent
	We claim that our assumption implies surjectivity of $\Omega^{-2} \Delta_{g_\epsilon} \colon W^{2,2}_\delta(M_{\epsilon}) \to L^2_\delta(M_{\epsilon})$. Indeed, let $f \in L^2_\delta(M_{\epsilon})$. By Proposition 	\ref{prop:global-analysis:laplace-surjectivive-not-R3} there must be a $u \in W^{2,2}_\delta(M_{\epsilon})$ and a $\lambda \in \R$ such that $\Omega^{-2}\Delta_{g_\epsilon}(u + \lambda \phi) = f$. By our choice of $v$, we also have
	$$
	\Omega^{-2} \Delta_{g_\epsilon}(u - \lambda v) 
	= \Omega^{-2} \Delta_{g_\epsilon}(u + \lambda \phi - \lambda (v + \phi)) = f.
	$$
	Hence $u - \lambda v \in W^{2,2}_{\delta}(M_{\epsilon})$ is an inverse of $f$.
	\\
	
	\noindent
	We claim that surjectivity of $\Omega^{-2} \Delta_{g_\epsilon} \colon W^{2,2}_\delta(M_{\epsilon}) \to L^2_\delta(M_{\epsilon})$ leads to a contradiction. Indeed, when $\Omega^{-2} \Delta_{g_\epsilon}$ is surjective, then $L_\delta$ is surjective and its formal adjoint must be injective. 
	On the asymptotic part of $M_{\epsilon}$, the formal adjoint is $L_{-\delta}$ . Because $\delta < 0$, the constants are part of the kernel of $L_\delta^*$, but we just have shown that the kernel of $L_\delta^*$ is trivial. Therefore, $v$ does not exist.
\end{proof}

\noindent
Lemma \ref{lem:global-analysis:injectivity-delta-below-zero} and Propositions \ref{prop:global-analysis:laplace-surjectivive-not-R3} and  \ref{prop:global-analysis:laplace-injective-not-R3} prove Theorem \ref{thm:analysis-asymptotic:laplace-bijectivity-global} for the Sobolev norm under the extra assumption that $g_\epsilon = g^{GH}_\epsilon$ on the asymptotic region $P'_\infty$. Because invertibility is an open condition and the operator norm of $ \Omega^{-2} (\Delta_{g_\epsilon} - \Delta_{g^{GH}_\epsilon})$ can be made arbitrary small, this extra condition is superfluous.

\subsection{Invertibility in H\"older spaces}
For any $\tilde{\delta} > \delta$, the space $
C^{0}_\delta(M_{B,n})$ embeds into $L^2_{\tilde \delta}(M_{\epsilon})$. Therefore, the isomorphism from Theorem \ref{thm:analysis-asymptotic:laplace-bijectivity-global} for the Sobolev case and the regularity result from Theorem \ref{thm:analysis-asymptotic:schauder-global-nash-moser} imply that for any $f \in C^{0, \alpha}_\delta(M_\epsilon)$ and $\tilde \delta > \delta$, there exists a $u \in C^{2, \alpha}_{\tilde \delta}(M_\epsilon)$  (or $u \in C^{2, \alpha}_{\tilde \delta}(M_\epsilon) \oplus \R \phi$) such that $\Omega^{-2} \Delta_{g_\epsilon} u = f$. To make $\Omega^{-2} \Delta_{g_\epsilon}$ into an actual isomorphism, we need to regain the weight that we have lost in the embedding, i.e. we need to show that $u \in C^{2, \alpha}_{\delta}(M_\epsilon)$  (or $u \in C^{2, \alpha}_{\delta}(M_\epsilon) \oplus \R \phi$). 
\\

\noindent
In order to regain the weight, we are going to study the family of functions $u_{\tilde \delta}$ that solve $\Omega^{-2} \Delta_{g_\epsilon} u_{\tilde \delta} = f$ for some fixed $f \in C^{0, \alpha}_\delta(M_\epsilon)$ and we will see if we can take the limit $\tilde \delta$ to $\delta$. At first sight this limit should not converge in $C^{2, \alpha}_{\tilde \delta}(M_\epsilon)$ or $C^{2, \alpha}_{\tilde \delta}(M_\epsilon) \oplus \R \phi$. Namely, the weight function $e^{\tilde \delta \rho}$ does not converge to $e^{\delta \rho}$ \textit{uniformly}. Therefore, any $C^{2, \alpha}_\delta$ estimate of $u_{\tilde \delta}$ will likely diverge. We will circumvent this issue by considering pointwise convergence and only use uniform convergence on compact sets. For this to work we first need to extend Theorem \ref{thm:analysis-asymptotic:special-perp-schauder-polynomial-case} globally and study the behavior of its constants under perturbation of weight.

\begin{lemma}
	\label{lem:uniformity-delta}
	Let $\delta_{\min},\delta_{\max} \in \R$ such that $[\delta_{\min}, \delta_{\max}] \subset \R \setminus \Z$.
	On top of the conditions of Theorem \ref{thm:analysis-asymptotic:special-perp-schauder-polynomial-case}, assume that $ \delta \in [\delta_{\min}, \delta_{\max}]$.
	There exists a compact set $K$ and a constant $C > 0$, depending on $\epsilon$, $\delta_{\min}$ and $\delta_{\max}$ such that for any $u \in C^{2, \alpha}_{\delta}(M_\epsilon)$,
	\begin{align*}
		\| u\|_{C^{2,\alpha}_{\delta}(M_\epsilon)} \le& C \left[
		\| \Omega^{-2}\Delta_{g_\epsilon} u\|_{C^{0, \alpha}_{\delta}(M_\epsilon)}
		+ \| u\|_{C^0_{\delta}(K)} 
		\right].
	\end{align*}
\end{lemma}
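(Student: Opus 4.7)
The plan is to combine the global interior regularity estimate of Theorem~\ref{thm:analysis-asymptotic:schauder-global-holder} with the Dirichlet-type estimate of Theorem~\ref{thm:analysis-asymptotic:special-perp-schauder-polynomial-case} to localize the zeroth-order term to a compact subset of $M_\epsilon$, and then verify that the resulting constant can be chosen independently of $\delta\in[\delta_{\min},\delta_{\max}]$.

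For each fixed $\delta$, Theorem~\ref{thm:analysis-asymptotic:schauder-global-holder} (extended from $P_\infty$ to $M_\epsilon$ by an additional interior Schauder estimate on the compact region $M_\epsilon\setminus P_\infty$, on which weighted and unweighted norms are equivalent) produces an inequality
$$\|u\|_{C^{2,\alpha}_\delta(M_\epsilon)}\le C_1\bigl[\|\Omega^{-2}\Delta_{g_\epsilon}u\|_{C^{0,\alpha}_\delta(M_\epsilon)}+\|u\|_{C^0_\delta(M_\epsilon)}\bigr].$$
Applying Theorem~\ref{thm:analysis-asymptotic:special-perp-schauder-polynomial-case} on $P_\infty\subset P'_\infty$ then controls the asymptotic piece of the zeroth-order term by $C_2[\|\Omega^{-2}\Delta u\|_{C^{0,\alpha}_\delta(P'_\infty)}+\|u\|_{C^0_\delta(P'_\infty\setminus P_\infty)}]$. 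The collar $P'_\infty\setminus P_\infty$ is compact, so taking $K:=M_\epsilon\setminus P_\infty$, which is itself compact, absorbs the remaining zeroth-order contribution and yields the stated estimate for each single $\delta$.

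The real content of the lemma is $\delta$-uniformity of the constant. To see it, decompose $u=u_b+u_f$ with the $S^1$-averaging operators of Lemmas~\ref{lem:analysis-asymptotic:laplacian-compatible-splitting} and \ref{lem:analysis-asymptotic:boundedness-av-operator}, which commute with $\Omega^{-2}\Delta_{g^{GH}_\epsilon}$ and whose operator norms on $C^{k,\alpha}_{cf}$ are independent of $\delta$. For the non-invariant part $u_f$, the Poincar\'e-inequality argument in the proof of Theorem~\ref{thm:analysis-asymptotic:special-perp-schauder-polynomial-case} produces a constant depending only on $\epsilon$ and the collapse ratio $\epsilon\,\Omega/\sqrt{h_\epsilon}$, hence is manifestly $\delta$-independent. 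For the invariant part $u_b$, the problem descends via the identity $h_\epsilon\Delta_{g^{GH}_\epsilon}=\Delta^B$ to Bartnik's weighted Schauder theory on $B$, which applies at every weight $\delta\notin\Z$; on the compact subinterval $[\delta_{\min},\delta_{\max}]\subset\R\setminus\Z$ the corresponding constant is bounded uniformly. Finally, the perturbation from $g^{GH}_\epsilon$ to $g_\epsilon$ contributes an operator-norm term $\|\Omega^{-2}(\Delta_{g_\epsilon}-\Delta_{g^{GH}_\epsilon})\|_{op}$ that can be absorbed by shrinking $P_\infty$, exactly as in the last step of the proof of Theorem~\ref{thm:analysis-asymptotic:special-perp-schauder-polynomial-case}, and is also $\delta$-independent.

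The main obstacle is the continuous dependence of the Bartnik constant on $\delta$. The cleanest route is a contradiction/compactness argument: if the constant for the $S^1$-invariant estimate blew up along a sequence $\delta_n\in[\delta_{\min},\delta_{\max}]$, one normalizes and extracts a subsequential limit, which would be a non-trivial harmonic function on $B$ in the weighted space at a limit weight $\delta_\ast\notin\Z$ with vanishing right-hand side and vanishing zeroth-order norm on the compact piece. This contradicts Bartnik's classification of weighted harmonic functions at non-indicial weights. Everything else in the argument is routine bookkeeping over the splitting $u=u_b+u_f$ and the decomposition $M_\epsilon=P_\infty\cup K$.
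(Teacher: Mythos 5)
Your first step --- combining interior Schauder estimates on a compact core with Theorem~\ref{thm:analysis-asymptotic:special-perp-schauder-polynomial-case} on the end, and absorbing the collar $P'_\infty\setminus P_\infty$ into the compact set $K$ --- is exactly what the paper does to get the estimate for each fixed $\delta$. Where you diverge is the $\delta$-uniformity. You route it through the splitting $u=u_b+u_f$ and reduce the only genuinely $\delta$-dependent constant to Bartnik's theory on $B$, to be handled by a blow-up argument. The paper instead works directly with the weighted operator: expanding
$L_\delta u=\Omega^{-2}\Delta_{g_\epsilon}u+\delta\,u\,\Omega^{-2}\Delta_{g_\epsilon}\rho-\delta^2u\,\|\d\rho\|^2_{cf}-2\delta\langle\d u,\d\rho\rangle_{cf}$
shows $\|(L_{\delta}-L_{\delta'})u\|_{C^{0,\alpha}_{cf}}\le C\,|\delta-\delta'|\,\|u\|_{C^{1,\alpha}_{cf}}$, so one takes a sequence $u_i$, $\delta_i\to\delta_{\lim}$ violating uniformity, normalizes $\|u_i\|_{C^{2,\alpha}_{cf}}=1$, and applies the \emph{fixed-weight} estimate at $\delta_{\lim}$ to $u_i$ itself; the right-hand side tends to zero, contradicting the normalization. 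No limit function is ever extracted.

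This matters because your compactness step has a genuine gap: you ``normalize and extract a subsequential limit, which would be a non-trivial harmonic function''. On a non-compact manifold the non-triviality is not automatic --- the normalized sequence can concentrate at infinity, in which case the locally-uniform limit is identically zero and no contradiction results. Ruling this out would require either a re-centering/rescaling analysis at infinity or an appeal to the very asymptotic estimate whose $\delta$-uniformity you are trying to establish, which is circular for the $S^1$-invariant part. Moreover, the contradiction you invoke is not quite the right one: a non-trivial harmonic function in $C^{2,\alpha}_{\delta_*}$ is not forbidden by ``Bartnik's classification at non-indicial weights'' (for $\delta_*>0$ such functions abound); what one must use is that the limit has vanishing data \emph{and} vanishing norm on $K$, and then re-apply the a priori estimate at weight $\delta_*$ to conclude it is zero --- which again presupposes non-triviality. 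If you replace the limit-extraction by the operator-continuity argument above (which costs only the explicit, polynomial-in-$\delta$ dependence of the coefficients of $L_\delta$), your proof closes; as written, the key step is unjustified.
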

\begin{remark}
	Before we were able to to show that all are estimates are uniform in $\epsilon$. This was possible, because we controlled the geometry of the asymptotic region of $M_\epsilon$. If we have a uniform elliptic regularity estimate on the interior of $M_\epsilon$ -- for example, we have uniform bounded geometry on the interior of $M_\epsilon$ -- then the constant in the above lemma can be chosen uniform w.r.t. $\epsilon$. As we haven't put any conditions on the compact region of $M_\epsilon$, this uniformity of the estimate cannot be guaranteed.
\end{remark}
\begin{proof}[Proof of Lemma \ref{lem:uniformity-delta}]
	Let $P_\infty \subset P'_\infty$ be the asymptotic regions defined in Theorem \ref{thm:analysis-asymptotic:special-perp-schauder-polynomial-case}.
	Let $K' \subset K''$ compact subsets of $M_\epsilon$, such that they both cover $M_\epsilon \setminus P_\infty$. By elliptic regularity of $\Omega^{-2} \Delta_{g_\epsilon}$, there exists a constant $C> 0$, depending on $\epsilon$ and $\delta$, such that 
	$$
	\| u\|_{C^{2,\alpha}_{\delta}(K')} \le C \left[
	\| \Omega^{-2}\Delta_{g_\epsilon} u\|_{C^{0, \alpha}_{\delta}(K'')}
	+ \| u\|_{C^0_{\delta}(K'')} 
	\right].	
	$$
	Combining this with the estimates of Theorem \ref{thm:analysis-asymptotic:special-perp-schauder-polynomial-case}, we get
	\begin{align*}
			\| u\|_{C^{2,\alpha}_{\delta}(K' \cup P_\infty)} \le C \left[
		\| \Omega^{-2}\Delta_{g_\epsilon} u\|_{C^{0, \alpha}_{\delta}(K'' \cup P'_\infty)}
		+ \| u\|_{C^0_{\delta}(K'' \cup (P'_\infty \setminus P_\infty))} 
		\right].	
	\end{align*}
	Because $P'_\infty \setminus P_\infty$ can be chosen compact, we set $K = K'' \cup (P'_\infty \setminus P_\infty)$ and conclude
	\begin{align}
		\label{eq:special-fredholm-need-to-check-uniformity-delta}
		\| u\|_{C^{2,\alpha}_{\delta}(M_\epsilon)} \le C \left[
		\| \Omega^{-2}\Delta_{g_\epsilon} u\|_{C^{0, \alpha}_{\delta}(M_\epsilon)}
		+ \| u\|_{C^0_{\delta}(K))} 
		\right].	
	\end{align}
	We only need to show $C$ can be chosen uniformly in $\delta$. We prove this by using that $L_\delta$ is uniform in $\delta$. Indeed, assume that $C$ is not uniform in $\delta$.
	Then, there must be sequences $u_i \in C^{2, \alpha}_{cf}(M_{\epsilon})$ and $\delta_i \in \R$ such that
	\begin{align*}
		\|u_i\|_{C^{2, \alpha}_{cf}(M_{\epsilon})} &= 1,  &
		\|L_{\delta_i} u_i\|_{C^{0, \alpha}_{cf}(M_{\epsilon})} &\to 0, \\
		\|u_i\|_{C^{0}_{cf}(M_{\epsilon})} &\to 0,  &
		\delta_i &\to \delta_{\lim} \in [\delta_{\min}, \delta_{\max}].
	\end{align*}
	We apply Equation \ref{eq:special-fredholm-need-to-check-uniformity-delta} on $u_i$ with the limiting weight $\delta_{\lim}$, which yields
	\begin{align}
		\| u_i\|_{C^{2,\alpha}_{cf}(M_{\epsilon})} 
		&\le C(\epsilon, \delta_{\lim}) \left[
		\| L_{\delta_{\lim}} u_i\|_{C^{0,\alpha}_{cf}(M_{\epsilon})}
		+ \| u_i\|_{C^{0}_{cf}(M_{\epsilon})}
		\right] \notag\\
		&\le C(\epsilon, \delta_{\lim}) \left[
		\| L_{\delta_i} u_i\|_{C^{0,\alpha}_{cf}(M_{\epsilon})}
		+ \| (L_{\delta_{\lim} }  -L_{\delta_i}) u_i\|_{C^{0,\alpha}_{cf}(M_{\epsilon})}
		+ \| u_i\|_{C^{0}_{cf}(M_{\epsilon})}
		\right]. \label{eq:estimate-uniformity-delta-partly}
	\end{align}
	For any $\delta \in \R$, 
	$$
	L_\delta u_i 
	= e^{- \delta \rho} \Omega^{-2} \Delta_{g_\epsilon}(e^{\delta \rho} u)
	= \Omega^{-2} \Delta_{g_\epsilon}(u) 
	+  \delta \: u \cdot  \Omega^{-2} \Delta_{g_\epsilon} \rho
	- \delta^2  u \cdot  \| \d \rho \|^2_{g_{cf}}
	 - 2 \delta  \langle \d u, \d \rho \rangle_{g_{cf}}
	$$
	which imply that there exists a uniform constant $C' > 0$ such that 
	$$
	\| (L_{\delta_{\lim} }  -L_{\delta_i}) u_i\|_{C^{0,\alpha}_{cf}(M_{\epsilon})} \le C \:|\delta_{\lim} - \delta_{i}| \cdot \| u_i \|_{C^{1,\alpha}_{cf}(M_{\epsilon})}.
	$$
	Hence, the right hand side of Equation \ref{eq:estimate-uniformity-delta-partly} converge to zero. This yields a contradiction as $	\|u_i\|_{C^{2, \alpha}_{cf}(M_{\epsilon})} = 1$ for all $i \in \N$.
\end{proof}

\noindent
With the uniform control of $C_\epsilon$ we can finally prove the bijectivity of $\Omega^{-2} \Delta_{g_\epsilon}$.
\begin{proof}[Proof of Theorem \ref{thm:analysis-asymptotic:laplace-bijectivity-global}, H\"older case]
	Let $f \in C^{k, \alpha}_\delta(M_\epsilon)$. For any $\tilde \delta \in (\delta, \delta/2)$, $f$ is an element of $L^2_{\tilde \delta}(M_{B,n})$, and so by Theorem 	\ref{thm:analysis-asymptotic:laplace-bijectivity-global} there exists a $u \in W^{2,2}_{\tilde \delta}(M_{B,n})$ such that $\Omega^{-2} \Delta_{g_\epsilon} u = f$.  By Proposition \ref{prop:analysis-asymptotic:schauder-local-nash-moser}, $u$ is an element of $C^{2, \alpha}_{\tilde \delta}(M_{B,n})$ and because $\Omega^{-2} \Delta_{g_\epsilon}$ is injective, the function $u$ does not depend on the choice of $\tilde \delta$.
	\\

	\noindent
	Using elliptic regularity it is sufficient to show $u \in C^{0}_\delta(M_\epsilon)$. So let $x \in M_\epsilon$ and consider $|e^{- \delta \rho} u| (x)$.
	We can estimate this as
	$$
	|e^{- \delta \rho} u|(x) \le |e^{(\tilde \delta - \delta) \rho}|(x) \cdot \|e^{- \tilde \delta} u\|_{C^{2, \alpha}_{cf}(M_{\epsilon})}.
	$$
	Because $\tilde \delta$ is not an indicial root, we can apply Lemma \ref{lem:uniformity-delta}:
	\begin{align*}
		|e^{- \delta \rho} u|(x) &\le |e^{(\tilde \delta - \delta) \rho}|(x) \cdot 
		C \left[
		\|e^{- \tilde \delta \rho} \: f\|_{C^{0, \alpha}_{cf}(M_{\epsilon})} + 
		\|e^{- \tilde \delta \rho} u\|_{C^{0}(K)}
		\right] \\
		&\le |e^{(\tilde \delta - \delta) \rho}|(x) \cdot 
		C \left[
		\|e^{(\delta - \tilde \delta) \rho} \|_{C^{0, \alpha}_{cf}(M_{\epsilon})} \cdot 
		\|f\|_{C^{2, \alpha}_{\delta}(M_{\epsilon})} + 
		\|e^{- \tilde \delta\rho }\|_{C^{0}(K)}
		\cdot
		\|u\|_{C^{0}(K)}
		\right].
	\end{align*}
	The terms $\|e^{- \tilde \delta\rho }\|_{C^{0}(K)}$, $\|e^{(\delta - \tilde \delta) \rho} \|_{C^{0, \alpha}_{cf}(M_{\epsilon})}$ and $C$ are all uniformly bounded w.r.t. $\tilde \delta$: For the first term can be estimated explicitly, for the second term this follows due to the fact that $e^{(\delta - \tilde \delta) \rho} $ decays when $\tilde \delta > \delta$ and for the last term this is shown in Lemma \ref{lem:uniformity-delta}. Therefore, there exists a constant $C'$ that depends on $C$, $\|f\|_{C^{2, \alpha}_{\delta}(M_{B,n})}$ and $\|u\|_{C^{0}(K)}$ such that $|e^{- \delta \rho} u|(x)
	\le C' \cdot |e^{(\tilde \delta - \delta) \rho}|(x)$.
	\\

	\noindent
	For each $x \in M_{\epsilon}$, we pick $\tilde \delta > \delta$ such that $|e^{(\tilde \delta - \delta) \rho}|(x) \le 2$. This gives us an estimate of $|e^{- \delta \rho} u|(x)$ which does not depend on $\tilde \delta$. Therefore, $\|u\|_{C^0_\delta(M_{\epsilon})} = \sup_{x \in M_{\epsilon}} |e^{- \delta \rho} u|(x)
	\le  2 C' < \infty$.
\end{proof}

\newpage
\bibliography{Library.bib}

\end{document}